\numberwithin{equation}{section}
\newcommand{\N}{\mathbb{N}}
\newcommand{\R}{\mathbb{R}}
\newcommand{\sfd}{{\sf d}}
\renewcommand{\d}{{\mathrm d}}
\newcommand{\e}{{\rm e}}
\newcommand{\X}{{\rm X}}
\newcommand{\Y}{{\rm Y}}
\newcommand{\mm}{\mathfrak{m}}
\newcommand{\1}{\mathbbm 1}
\newcommand{\LIP}{{\rm LIP}}
\newcommand{\lip}{{\rm lip}}
\newcommand{\ppi}{{\mbox{\boldmath\(\pi\)}}}
\newcommand{\eeta}{{\mbox{\boldmath\(\eta\)}}}
\newcommand{\sppi}{{\mbox{\scriptsize\boldmath\(\pi\)}}}
\newcommand{\limi}{\varliminf}
\newcommand{\lims}{\varlimsup}
\newcommand{\eps}{\varepsilon}
\newcommand{\fr}{\penalty-20\null\hfill\(\blacksquare\)}
\newtheorem{theorem}{Theorem}[section]
\newtheorem{corollary}[theorem]{Corollary}
\newtheorem{lemma}[theorem]{Lemma}
\newtheorem{proposition}[theorem]{Proposition}
\newtheorem{definition}[theorem]{Definition}
\newtheorem{remark}[theorem]{Remark}
\newtheorem{problem}[theorem]{Problem}
\title{Testing the Sobolev property with a single test plan}
\author{Enrico Pasqualetto}
\address{Department of Mathematics and Statistics,
P.O.\ Box 35 (MaD), FI-40014 University of Jyvaskyla}
\email{enrico.e.pasqualetto@jyu.fi}
\begin{document}
\date{\today}
\allowdisplaybreaks
\keywords{Sobolev space, test plan, \(\sf RCD\) space}
\subjclass[2010]{53C23, 46E35}
\begin{abstract}
We prove that in a vast class of metric measure spaces
(namely, those whose associated Sobolev space is separable) the following
property holds: a single test plan can be used to recover
the minimal weak upper gradient of any Sobolev function.
This means that, in order to identify which are the exceptional curves
in the weak upper gradient inequality, it suffices to consider the
negligible sets of a suitable Borel measure on curves,
rather than the ones of the \(p\)-modulus. Moreover, on \(\sf RCD\) spaces
we can improve our result, showing that the test plan can
be also chosen to be concentrated on an equi-Lipschitz family of curves.
\end{abstract}
\maketitle
\tableofcontents
\section*{Introduction}
\noindent
Throughout the past two decades, the classical theory of first-order
Sobolev spaces has been successfully generalised to the abstract setting
of metric measure spaces. Two strategies played a central role
in the development of this subject: the approximation by Lipschitz
functions (introduced by J.\ Cheeger \cite{Cheeger00}) and the analysis
of the behaviour along curves (proposed by N.\ Shanmugalingam
\cite{Shanmugalingam00}, and later revisited by L.\ Ambrosio, N.\ Gigli,
and G.\ Savar\'{e} \cite{AmbrosioGigliSavare11}). As it has been
eventually proven in \cite{AmbrosioGigliSavare11-3}, all these approaches
are fully equivalent.
\medskip

Let \((\X,\sfd)\) be a (complete, separable) metric space endowed
with a (boundedly finite) Borel measure \(\mm\). Let \(p\in(1,\infty)\)
be fixed. Then the \textbf{\(p\)-Sobolev space} \(W^{1,p}(\X)\)
is a Banach space whose elements \(f\) are associated with a minimal
object \(|Df|_p\in L^p(\mm)\), which is called the \textbf{minimal
\(p\)-relaxed slope} \cite{Cheeger00} or the \textbf{minimal \(p\)-weak
upper gradient} \cite{Shanmugalingam00,AmbrosioGigliSavare11}, and is the
smallest \(p\)-integrable function that bounds from above the (modulus of
the) variation of $f$. In Cheeger's approach, \(|Df|_p\) can be characterised
as the minimal possible strong \(L^p(\mm)\)-limit of \(\lip(f_n)\) among all
sequences \((f_n)_n\subseteq\LIP_{bs}(\X)\) with
\(\lim_n\|f-f_n\|_{L^p(\mm)}=0\), where \(\lip(f_n)\) stands for the slope
of \(f_n\) (see \eqref{eq:def_slope}). In duality with this `Eulerian'
relaxation procedure, it is possible -- from a more `Lagrangian' viewpoint --
to identify \(|Df|_p\) by looking at the behaviour of \(f\) along rectifiable
curves. Namely, \(|Df|_p\) is the minimal function \(G\in L^p(\mm)\)
such that for \emph{almost every} absolutely continuous curve
\(\gamma\) it holds that \(f\circ\gamma\) is absolutely continuous and
\begin{equation}\tag{\(\star\)}\label{eq:wug_intro}
\bigg|\frac{\d}{\d t}\,f(\gamma_t)\bigg|\leq G(\gamma_t)\,|\dot\gamma_t|
\quad\text{ for }\mathcal L^1\text{-a.e.\ }t\in[0,1].
\end{equation}
There are different ways to detect the negligible families of curves that
are excluded from the weak upper gradient condition \eqref{eq:wug_intro}.
In Shanmugalingam's approach, the exceptional curves are measured with respect
to the \textbf{\(p\)-modulus} \({\rm Mod}_p\), which is an outer measure on
paths that plays a crucial role in function theory; cf.\ \cite{HKST15}.
Ambrosio, Gigli, and Savar\'{e} proposed the alternative notion of test plan:
calling \(q\in(1,\infty)\) the conjugate exponent of \(p\), they define a
\textbf{\(q\)-test plan} on \((\X,\sfd,\mm)\) as a Borel probability measure
\(\ppi\) on \(AC\big([0,1],\X\big)\) such that
\begin{align*}
\exists C>0:\quad&(\e_t)_\#\ppi\leq C\mm
\quad\forall t\in[0,1],\\
&\int\!\!\!\int_0^1|\dot\gamma_t|^q\,\d t\,\d\ppi(\gamma)<+\infty,
\end{align*}
where the evaluation map \(\e_t\) is given by
\(\e_t(\gamma)\coloneqq\gamma_t\). The first condition is a compression
estimate -- which grants that the plan does not concentrate mass too
much at any time -- while the second one is an integral bound on the
speed of the curves selected by the plan. It is then possible to
express \(|Df|_p\) as the minimal \(G\in L^p(\mm)\) such that
for every \(q\)-test plan \(\ppi\) the inequality \eqref{eq:wug_intro}
holds for \(\ppi\)-a.e.\ \(\gamma\). There are two main differences
between the \(p\)-modulus and a \(q\)-test plan: firstly, the former
is an outer measure, while the latter is a \(\sigma\)-additive Borel
measure (but a priori one has to consider possibly uncountably many
test plans to identify the minimal weak upper gradient); secondly,
in the definition of test plan the parametrisation of the involved
curves plays an essential role, while the
modulus is parametrisation-invariant. The duality between modulus
and plans has been studied in \cite{ADMS13}.
\medskip

The aim of this paper is to show that we can find a single
\(q\)-test plan \(\ppi_q\) -- which we shall call the \textbf{master test plan}
-- that is sufficient to recover the minimal weak upper gradient
of any given Sobolev function. More precisely, for every \(f\in W^{1,p}(\X)\)
it holds that \(|Df|_p\) is the minimal \(G\in L^p(\mm)\) such that
\eqref{eq:wug_intro} holds for \(\ppi_q\)-a.e.\ \(\gamma\).
This result will be achieved on a vast class of metric measure spaces,
\emph{i.e.}, those having separable Sobolev space \(W^{1,p}(\X)\), which is
a quite mild assumption (cf.\ Remark \ref{rmk:W_sep}). Let us briefly
outline the ideas behind the proof:
\begin{itemize}
\item[\(\rm a)\)] The main tool we use is the \textbf{plan representing
the gradient} of a Sobolev function, a concept introduced by Gigli
in \cite{Gigli12}. This means, roughly speaking, that the `derivative'
at time \(t=0\) of the test plan coincides with the gradient of a given
function.
\item[\(\rm b)\)] In lack of a linear structure underlying the ambient
space \(\X\), we work within the framework of the abstract tensor
calculus built by Gigli in \cite{Gigli14}, which relies upon the theory
of \textbf{normed modules}. This supplies the functional-analytic tools
we will need.
\item[\(\rm c)\)] We will further investigate the plans representing a
gradient and fit them in the setting of the normed modules calculus,
which was still not available at the time of \cite{Gigli12}.
More precisely, we prove -- in a suitable sense -- that if a test plan \(\ppi\)
represents the gradient of \(f\in W^{1,p}(\X)\), then for every
\(g\in W^{1,p}(\X)\) and \(\ppi\)-a.e.\ \(\gamma\) the derivative
at time \(t=0\) of \(g\circ\gamma\) coincides with
\(\langle\nabla g,\nabla f\rangle(\gamma_0)\). See Proposition
\ref{prop:conv_sc_prod} for the precise statement.
\item[\(\rm d)\)] Given a dense sequence \((f_n)_n\) in \(W^{1,p}(\X)\)
and calling \(\ppi^n\) the plan representing the gradient of \(f_n\),
we show -- by using the results we mentioned in item c) -- that the
countable family \(\{\ppi^n\}_n\) of \(q\)-test plans is sufficient to
identify the minimal weak upper gradient of each Sobolev function.
Finally, by suitably combining the measures \(\ppi^n\) we obtain the
desired master test plan \(\ppi_q\). See Theorem \ref{thm:master_tp}
for the details.
\end{itemize}
The weak upper gradient condition \eqref{eq:wug_intro}
can be additionally used (when considered with respect to the modulus,
or to the totality of test plans) to detect which functions
are Sobolev. Currently, it is not known whether the same holds for the
master test plan; cf.\ Problem \ref{pb:open_pb}.
\medskip

In the last part of the paper, we improve our existence result of
master test plans in the case in which the metric measure space
\((\X,\sfd,\mm)\) satisfies a lower Ricci curvature bound. More
specifically, we consider the so-called \textbf{\(\sf RCD\) spaces}, which
are infinitesimally Hilbertian metric measure spaces (\emph{i.e.},
the associated \(2\)-Sobolev space is Hilbert) fulfilling the celebrated
curvature-dimension condition introduced by Lott--Sturm--Villani
\cite{Lott-Villani07,Sturm06I,Sturm06II}. In this framework, we show
that the master test plan \(\ppi_2\) can be also chosen to be concentrated
on an equi-Lipschitz family of curves; cf.\ Theorem \ref{thm:master_tp_RCD}.
This sort of property has to do with the dependence on the exponent \(p\)
of minimal \(p\)-weak upper gradients, see Remark \ref{rmk:infty_tp_diff_p}
for a more detailed discussion. To prove
Theorem \ref{thm:master_tp_RCD}, instead of plans representing
the gradient we employ the theory of \textbf{regular Lagrangian flows},
available on \(\sf RCD\) spaces thanks to \cite{Ambrosio-Trevisan14}.
\medskip

{\bf Acknowledgements.}
The author would like to thank Tapio Rajala and Daniele Semola
for the careful reading of a preliminary version of this manuscript.
This research has been supported by the Academy of Finland,
projects 274372, 307333, 312488, and 314789.
\section{Preliminaries}
\subsection{Sobolev calculus on metric measure spaces}\label{s:Sobolev}
For the purposes of this article, by \textbf{metric measure space}
we mean a triple \((\X,\sfd,\mm)\), where
\begin{align*}
(\X,\sfd)&\quad\text{ is a complete and separable metric space,}\\
\mm\geq 0&\quad\text{ is a boundedly finite Borel measure on }(\X,\sfd).
\end{align*}
The space \(C\big([0,1],\X\big)\) of continuous curves
in \(\X\) is a complete and separable metric space when
equipped with the supremum distance
\(\sfd_\infty(\gamma,\sigma)\coloneqq\max\big\{\sfd(\gamma_t,\sigma_t)
\,:\,t\in[0,1]\big\}\).
Given any \(t\in[0,1]\), we denote by \(\e_t\colon C\big([0,1],\X\big)\to\X\)
the \textbf{evaluation map} at time \(t\), namely, we set
\(\e_t(\gamma)\coloneqq\gamma_t\) for every
\(\gamma\in C\big([0,1],\X\big)\). Moreover, for any \(s,t\in[0,1]\)
with \(s<t\) we define the \textbf{restriction map}
\({\rm restr}_s^t\colon C\big([0,1],\X\big)\to C\big([0,1],\X\big)\)
as \({\rm restr}_s^t(\gamma)_r\coloneqq\gamma_{rt+(1-r)s}\).
Observe that both \(\e_t\) and \({\rm restr}_s^t\) are continuous maps.
A curve \(\gamma\in C\big([0,1],\X\big)\) is said to be
\textbf{absolutely continuous} provided there exists \(g\in L^1(0,1)\)
such that \(\sfd(\gamma_t,\gamma_s)\leq\int_s^t g(r)\,\d r\) for
every \(s,t\in[0,1]\) with \(s<t\). In this case, it holds that the limit
\(|\dot\gamma_t|\coloneqq\lim_{h\to 0}\sfd(\gamma_{t+h},\gamma_t)/|h|\)
exists at \(\mathcal L^1\)-a.e.\ \(t\in[0,1]\) and defines a
function in \(L^1(0,1)\), which is the minimal one (in the a.e.\ sense)
satisfying the inequality in the absolute continuity condition.
The function \(|\dot\gamma|\) -- which is declared to be \(0\) at those
\(t\in[0,1]\) where the above limit does not exist -- is called the
\textbf{metric speed} of \(\gamma\). We denote by \(AC\big([0,1],\X\big)\)
the family of all absolutely continuous curves on \(\X\). Given any
\(q\in(1,\infty)\), we define the family of \(q\)-absolutely continuous
curves as
\[
AC^q\big([0,1],\X\big)\coloneqq\Big\{\gamma\in AC\big([0,1],\X\big)\;
\Big|\;|\dot\gamma|\in L^q(0,1)\Big\}.
\]
The space of all real-valued Lipschitz functions on \((\X,\sfd)\) having
bounded support is denoted by \(\LIP_{bs}(\X)\). Given any function
\(f\in\LIP_{bs}(\X)\), we define its \textbf{slope}
\(\lip(f)\colon\X\to[0,+\infty)\) as
\begin{equation}\label{eq:def_slope}
\lip(f)(x)\coloneqq\lims_{y\to x}\frac{\big|f(x)-f(y)\big|}{\sfd(x,y)}
\quad\text{ if }x\in\X\text{ is an accumulation point}
\end{equation}
and \(\lip(f)(x)\coloneqq 0\) otherwise. Furthermore, for any
\(q\in(1,\infty)\) we denote by \(\mathscr P_q(\X)\) the set
of all Borel probability measures \(\mu\) on \((\X,\sfd)\) having
\textbf{finite \(q^{th}\)-moment}, \emph{i.e.}, satisfying
\[
\int\sfd^q(\cdot,\bar x)\,\d\mu<+\infty\quad
\text{ for some (thus any) point }\bar x\in\X.
\]
In the sequel, we will often consider the integral (in the sense
of Bochner \cite{DiestelUhl77}) of maps of the form
\([0,1]\ni t\mapsto\Phi_t\in\mathbb B\), where \(\mathbb B\) is
a separable Banach space; more precisely, \(\mathbb B\) will always
be an \(L^p\)-space, for some exponent \(p\in[1,\infty)\). The fact
that the maps \(\Phi\colon[0,1]\to\mathbb B\) we will consider are
strongly Borel follows by standard arguments, thus we will not
insist further on measurability issues. Let us just recall that
if a map \(\Phi\colon[0,1]\to L^p(\mu)\) is Bochner integrable,
then it holds that
\(\big(\int_0^1\Phi_t\,\d t\big)(x)=\int_0^1\Phi_t(x)\,\d t\)
for \(\mu\)-a.e.\ \(x\in\X\).
\begin{remark}\label{rmk:ex_tilde_m}{\rm
Let \((\X,\sfd,\mm)\) be a metric measure space. Fix any exponent
\(q\in(1,\infty)\). Then there exists a measure \(\tilde\mm\in\mathscr P_q(\X)\)
such that \(\mm\ll\tilde\mm\leq C\mm\) holds for some constant \(C>0\).

In order to prove it, fix any point \(\bar x\in\X\). Given that \((\X,\sfd)\)
is separable, we can find a sequence \((x_k)_k\subseteq\X\)
such that \(\X=\bigcup_{k\in\N}B_1(x_k)\). Recall that
\(\mm\big(B_1(x_k)\big)<+\infty\) for all \(k\in\N\).
We define \(A_1\coloneqq B_1(x_1)\) and
\(A_k\coloneqq B_1(x_k)\setminus(A_1\cup\ldots\cup A_{k-1})\)
for every \(k\geq 2\). Let us put
\[
\mu\coloneqq\sum_{k=1}^\infty\frac{\mm|_{A_k}}
{2^k\big(\sfd(x_k,\bar x)+1\big)^q\max\big\{\mm(A_k),1\big\}},
\qquad\tilde\mm\coloneqq\frac{\mu}{\mu(\X)}.
\]
It holds that \(\mu\) is a Borel measure on \(\X\) satisfying
\(\mu(\X)\leq\sum_{k=1}^\infty 2^{-k}=1\), whence \(\tilde\mm\)
is a (well-defined) Borel probability measure on \(\X\). If a Borel
set \(N\subseteq\X\) satisfies \(\mu(N)=0\), then we have that
\(\mm(N)=\sum_{k=1}^\infty\mm(N\cap A_k)=0\), thus showing that
\(\mm\ll\tilde\mm\). Moreover, observe that one has \(\mu\leq\sum_{k=1}^\infty
2^{-k}\,\mm|_{A_k}\leq\mm\) and accordingly \(\tilde\mm\leq\mu(\X)^{-1}\mm\).
Finally, given that the inequality
\(\sfd(\cdot,\bar x)\leq\sfd(x_k,\bar x)+1\)
holds on \(A_k\) for any \(k\in\N\), we conclude that
\[
\int\sfd^q(\cdot,\bar x)\,\d\tilde\mm=\frac{1}{\mu(\X)}\sum_{k=1}^\infty
\frac{1}{2^k\max\big\{\mm(A_k),1\big\}}
\int_{A_k}\bigg(\frac{\sfd(\cdot,\bar x)}{\sfd(x_k,\bar x)+1}\bigg)^q
\d\mm\leq\frac{1}{\mu(\X)},
\]
thus proving that the measure \(\tilde\mm\) has finite \(q^{th}\)-moment.
\fr}\end{remark}
\subsubsection{Definition of Sobolev space}
Let us recall Cheeger's notion of Sobolev space, based upon
the relaxation of the slope. Other approaches will be discussed
in Sections \ref{ss:Mod} and \ref{ss:test_plans}.
\begin{definition}[Sobolev space \cite{Cheeger00}]
Let \((\X,\sfd,\mm)\) be a metric measure space and \(p\in(1,\infty)\).
Then we declare that a function \(f\in L^p(\mm)\) belongs to the
\textbf{\(p\)-Sobolev space} \(W^{1,p}(\X)\) provided there exists a sequence
\((f_n)_n\subseteq\LIP_{bs}(\X)\) such that \(f_n\to f\) in \(L^p(\mm)\) and
\[
\limi_{n\to\infty}\int\lip^p(f_n)\,\d\mm<+\infty.
\]
\end{definition}
The Sobolev space \(W^{1,p}(\X)\) is a Banach space if endowed with the norm
\[
\|f\|_{W^{1,p}(\X)}\coloneqq\Big(\|f\|_{L^p(\mm)}^p+
p\,{\rm E}_{{\rm Ch},p}(f)\Big)^{\frac{1}{p}}
\quad\text{ for every }f\in W^{1,p}(\X),
\]
where the \textbf{Cheeger \(p\)-energy} \({\rm E}_{{\rm Ch},p}\) is given by
\[
{\rm E}_{{\rm Ch},p}(f)\coloneqq\inf\bigg\{\limi_{n\to\infty}
\frac{1}{p}\int\lip^p(f_n)\,\d\mm\;\bigg|\;(f_n)_n\subseteq\LIP_{bs}(\X),
\,f_n\to f\text{ in }L^p(\mm)\bigg\}.
\]
It holds that for every \(f\in W^{1,p}(\X)\) there exists a unique function
\(|Df|_p\in L^p(\mm)\) such that
\[
{\rm E}_{{\rm Ch},p}(f)=\frac{1}{p}\int|Df|_p^p\,\d\mm.
\]
The function \(|Df|_p\) is called the \textbf{minimal \(p\)-relaxed slope}
of \(f\).
\begin{remark}\label{rmk:ineq_diff_p}{\rm
The minimal \(p\)-relaxed slope might depend on the exponent \(p\). More
precisely, if \(p,p'\in(1,\infty)\) and \(f\in W^{1,p}(\X)\cap W^{1,p'}(\X)\),
then it might happen that \(|Df|_p\neq|Df|_{p'}\). Some examples of spaces in
which this phenomenon occurs can be found in \cite{DiMarinoSpeight15}.
\fr}\end{remark}
\begin{remark}\label{rmk:W_sep}{\rm
The reflexivity properties of the Sobolev spaces are investigated
in \cite{ACDM15}, where the authors proved, \emph{e.g.}, that \(W^{1,p}(\X)\)
is reflexive as soon as the underlying space \((\X,\sfd,\mm)\) is metrically
doubling. However, just one example of non-reflexive Sobolev space is known
(also provided in \cite{ACDM15}). Furthermore, the reflexivity of
\(W^{1,p}(\X)\) implies its separability.
\fr}\end{remark}
\subsubsection{The theory of normed modules}
We need to recall a few basic notions in the theory of normed
modules introduced in \cite{Gigli14,Gigli17}. Given a metric
measure space \((\X,\sfd,\mm)\) and an exponent \(p\in(1,\infty)\),
we say that \(\mathscr M\) is a
\textbf{\(L^p(\mm)\)-normed \(L^\infty(\mm)\)-module}
if it is a module over the ring \(L^\infty(\mm)\) and it is equipped
with a \textbf{pointwise norm} \(|\cdot|\colon\mathscr M\to L^p(\mm)\)
satisfying
\begin{align*}
|v|\geq 0&\quad\text{ for every }
v\in\mathscr M,\text{ with }|v|=0\text{ if and only if }v=0,\\
|f\cdot v|=|f||v|&\quad\text{ for every }
v\in\mathscr M\text{ and }f\in L^\infty(\mm),\\
|v+w|\leq|v|+|w|&\quad\text{ for every }v,w\in\mathscr M,
\end{align*}
where equalities and inequalities are intended in the \(\mm\)-a.e.\ sense.
Moreover, we require the norm
\(\|v\|_{\mathscr M}\coloneqq\big\||v|\big\|_{L^p(\mm)}\) to be complete,
whence \(\mathscr M\) has a Banach space structure.
\medskip

The \textbf{dual} of \(\mathscr M\) is given by the
space \(\mathscr M^*\) of \(L^\infty(\mm)\)-linear
continuous maps \(T\colon\mathscr M\to L^1(\mm)\).
Choosing \(q\in(1,\infty)\) so that \(\frac{1}{p}+\frac{1}{q}=1\),
we have that \(\mathscr M^*\) is a \(L^q(\mm)\)-normed
\(L^\infty(\mm)\)-module if endowed with the following
pointwise norm operator:
\[
|T|\coloneqq{\rm ess\,sup}\Big\{\big|T(v)\big|\;\Big|\;
v\in\mathscr M,\,|v|\leq 1\;\mm\text{-a.e.}\Big\}
\in L^q(\mm)\quad\text{ for every }T\in\mathscr M^*.
\]
The link between the Sobolev calculus and the theory of normed modules
is represented by the \textbf{cotangent module} \(L^p(T^*\X)\). It is
a \(L^p(\mm)\)-normed \(L^\infty(\mm)\)-module that comes with a
linear differential operator \(\d_p\colon W^{1,p}(\X)\to L^p(T^*\X)\)
and is characterised by these two properties:
\begin{align*}
|\d_p f|=|Df|_p\;\;\;\mm\text{-a.e.}&\quad\text{ for every }f\in W^{1,p}(\X),\\
\bigg\{\sum_{i=1}^n g_i\cdot\d_p f_i\;\bigg|\;(g_i)_{i=1}^n\subseteq
L^\infty(\mm),\,(f_i)_{i=1}^n\subseteq W^{1,p}(\X)\bigg\}&\quad
\text{ is dense in }L^p(T^*\X).
\end{align*}
The existence of the cotangent module when \(p=2\)
is proven in \cite{Gigli14}, while the case \(p\neq 2\) is treated
in \cite{GIGLI2019}. The dual \(L^q(T\X)\) of the space \(L^p(T^*\X)\)
is called the \textbf{tangent module}.
\medskip

Given any \(L^p(\mm)\)-normed \(L^\infty(\mm)\)-module \(\mathscr M\),
we define the map \({\sf Dual}\colon\mathscr M\to 2^{\mathscr M^*}\) as
\begin{equation}\label{eq:def_Dual}
{\sf Dual}(v)\coloneqq\Big\{\omega\in\mathscr M^*\;\Big|
\;\omega(v)=|v|^p=|\omega|^q\;\mm\text{-a.e.}\Big\}\quad
\text{ for every }v\in\mathscr M.
\end{equation}
It holds that \({\sf Dual}(v)\neq\emptyset\) for every \(v\in\mathscr M\),
as a consequence of Hahn--Banach theorem.
\medskip

Another important construction is that of \textbf{pullback module}.
Let \((\X,\sfd_\X,\mm_\X)\), \((\Y,\sfd_\Y,\mm_\Y)\) be metric measure
spaces. Let \(\varphi\colon\X\to\Y\) be a Borel map satisfying
\(\varphi_\#\mm_\X\leq C\mm_\Y\) for some constant \(C>0\).
Then it holds that for any \(L^p(\mm_\Y)\)-normed \(L^\infty(\mm_\Y)\)-module
\(\mathscr M\) there exist a unique \(L^p(\mm_\X)\)-normed
\(L^\infty(\mm_\X)\)-module \(\varphi^*\mathscr M\) and a unique
linear map \(\varphi^*\colon\mathscr M\to\varphi^*\mathscr M\) such that
the following properties are satisfied:
\begin{align*}
|\varphi^*v|=|v|\circ\varphi\;\;\;\mm_\X\text{-a.e.}&
\quad\text{ for every }v\in\mathscr M,\\
\bigg\{\sum_{i=1}^n f_i\cdot\varphi^*v_i\;\bigg|\;(f_i)_{i=1}^n\subseteq
L^\infty(\mm_\X),\,(v_i)_{i=1}^n\subseteq\mathscr M\bigg\}&
\quad\text{ is dense in }\varphi^*\mathscr M.
\end{align*}
We refer the reader to \cite{Gigli14,Gigli17} for a complete account
about normed modules.
\subsubsection{Infinitesimal Hilbertianity}
Let \((\X,\sfd,\mm)\) be a metric measure space. A \(L^2(\mm)\)-normed
\(L^\infty(\mm)\)-module \(\mathscr M\) is said to be a \textbf{Hilbert module}
provided the parallelogram rule holds:
\begin{equation}\label{eq:parall_rule}
|v+w|^2+|v-w|^2=2\,|v|^2+2\,|w|^2\;\;\;\mm\text{-a.e.}
\quad\text{ for every }v,w\in\mathscr M.
\end{equation}
The condition in \eqref{eq:parall_rule} is equivalent to requiring
that \(\mathscr M\) is Hilbert when viewed as a Banach space.
The \textbf{pointwise scalar product}
\(\langle\cdot,\cdot\rangle\colon\mathscr M\times\mathscr M\to L^1(\mm)\)
is then defined as follows:
\[
\langle v,w\rangle\coloneqq\frac{|v+w|^2-|v|^2-|w|^2}{2}\;\;\;\mm\text{-a.e.}
\quad\text{ for every }v,w\in\mathscr M.
\]
It can be straightforwardly checked that the map
\(\langle\cdot,\cdot\rangle\) is \(L^\infty(\mm)\)-bilinear and continuous.
\begin{remark}{\rm
Consider a \(L^2(\mm)\)-normed \(L^\infty(\mm)\)-module \(\mathscr M\)
and the map \({\sf Dual}\colon\mathscr M\to 2^{\mathscr M^*}\) as in
\eqref{eq:def_Dual}. Then \(\mathscr M\) is a Hilbert module if and
only if \(\sf Dual\) is single-valued and the unique element of
\({\sf Dual}(v)\) linearly depends on \(v\in\mathscr M\). The map
associating to every \(v\in\mathscr M\) the unique element
\({\sf R}_{\mathscr M}(v)\in\mathscr M^*\) of \({\sf Dual}(v)\)
is called the \textbf{Riesz isomorphism} of \(\mathscr M\). Moreover,
it holds that \({\sf R}_{\mathscr M}\colon\mathscr M\to\mathscr M^*\)
is a linear isomorphism that preserves the pointwise norm.
The above claims can be proven by arguing as in \cite[Exercise 4.2.11]{GP20}.
\fr}\end{remark}
A metric measure space \((\X,\sfd,\mm)\) is said to be
\textbf{infinitesimally Hilbertian} \cite{Gigli12} provided
the \(2\)-Sobolev space \(W^{1,2}(\X)\) is a Hilbert space, or
equivalently the cotangent module \(L^2(T^*\X)\) is a Hilbert module.
In this case, we define the linear operator
\(\nabla\colon W^{1,2}(\X)\to L^2(T\X)\) as
\[
\nabla f\coloneqq{\sf R}_{L^2(T^*\X)}(\d_2 f)\in L^2(T\X)
\quad\text{ for every }f\in W^{1,2}(\X).
\]
We say that \(\nabla f\) is the \textbf{gradient} of the function \(f\).
\subsection{Modulus and Newtonian space}\label{ss:Mod}
The notion of Sobolev space that we described in Section \ref{s:Sobolev}
corresponds, in the smooth framework, to the approach via approximation
by smooth functions. Another viewpoint on weakly differentiable functions
in the Euclidean space is the one introduced by B.\ Levi \cite{Levi06},
which consists in checking the behaviour of functions along curves.
This approach has been further refined by B.\ Fuglede \cite{Fuglede57},
who made it frame-independent by using the potential-theoretic notion of
\textbf{modulus}. Later on, the theory has been extended by
N.\ Shanmugalingam \cite{Shanmugalingam00} to the setting of metric
measure spaces, by introducing the so-called \textbf{Newtonian space},
whose definition builds upon the notion of \textbf{upper gradient} introduced
by J.\ Heinonen and P.\ Koskela \cite{HK98}.
\medskip

Let \((\X,\sfd,\mm)\) be a given metric measure space. Given an exponent
\(p\in(1,\infty)\) and any family \(\Gamma\subseteq AC\big([0,1],\X\big)\)
of non-constant curves, we define the \textbf{\(p\)-modulus} of \(\Gamma\) as
\[
{\rm Mod}_p(\Gamma)\coloneqq\inf\bigg\{\int\rho^p\,\d\mm\;\bigg|\;
\rho\colon\X\to[0,+\infty]\text{ Borel,}\,\int_0^1\rho(\gamma_t)\,
|\dot\gamma_t|\,\d t\geq 1\text{ for every }\gamma\in\Gamma\bigg\}.
\]
It holds that \({\rm Mod}_p\) is an outer measure. Typically, it is defined
on all (non-parametric) curves, but here we prefer this formulation since
it better fits our approach. A property is said to hold
\textbf{\({\rm Mod}_p\)-almost everywhere} provided it is satisfied
by every \(\gamma\) in some set \(\Gamma\) of curves whose complement is
\({\rm Mod}_p\)-negligible. Given two Borel functions \(\bar f\colon\X\to\R\)
and \(G\colon\X\to[0,+\infty]\) with \(G\in L^p(\mm)\), we say that \(G\)
is a \textbf{\(p\)-weak upper gradient} of \(\bar f\) if for
\({\rm Mod}_p\)-a.e.\ curve \(\gamma\) it holds that
\(\bar f\circ\gamma\) is absolutely continuous and
\(\big|\frac{\d}{\d t}\bar f(\gamma_t)\big|\leq G(\gamma_t)\,|\dot\gamma_t|\)
for \(\mathcal L^1\)-a.e.\ \(t\in[0,1]\).
\begin{definition}[Newtonian space \cite{Shanmugalingam00}]
Let \((\X,\sfd,\mm)\) be a metric measure space. Fix any
exponent \(p\in(1,\infty)\). Then the \textbf{Newtonian space}
\(N^{1,p}(\X)\) is the family of all \(f\in L^p(\mm)\) that admit
a Borel representative \(\bar f\colon\X\to\R\) having a \(p\)-weak
upper gradient \(G\in L^p(\mm)\).
\end{definition}

The Newtonian space can be made into a Banach space: given any
\(f\in N^{1,p}(\X)\), we define
\[
\|f\|_{N^{1,p}(\X)}\coloneqq\Big(\|f\|_{L^p(\mm)}^p+
\inf_{G\in D_p[f]}\|G\|_{L^p(\mm)}^p\Big)^{\frac{1}{p}},
\]
where \(D_p[f]\) stands for the family of all Borel functions
\(G\colon\X\to[0,+\infty]\) that are \(p\)-weak upper gradients
of some Borel version of \(f\). It turns out that \(\|\cdot\|_{N^{1,p}(\X)}\)
is a complete norm on \(N^{1,p}(\X)\). There exists a unique function
\(G_{f,p}\in D_p[f]\) having minimal \(L^p(\mm)\)-norm among all elements of
\(D_p[f]\), and it is minimal also in the \(\mm\)-a.e.\ sense. It holds that:
\begin{proposition}
Let \((\X,\sfd,\mm)\) be a metric measure space. Let \(p\in(1,\infty)\)
be given. Then we have that \(W^{1,p}(\X)\subseteq N^{1,p}(\X)\) and
\(G_{f,p}\leq|Df|_p\) holds \(\mm\)-a.e.\ for every \(f\in W^{1,p}(\X)\).
\end{proposition}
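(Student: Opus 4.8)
The plan is to show the inclusion $W^{1,p}(\X)\subseteq N^{1,p}(\X)$ together with the pointwise inequality $G_{f,p}\leq|Df|_p$ by a Lipschitz approximation argument. The natural strategy is to first establish the claim for functions $f\in\LIP_{bs}(\X)$, for which the slope $\lip(f)$ is a genuine (upper) gradient along every absolutely continuous curve, and then to pass to the limit along a recovery sequence.

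First I would treat the Lipschitz case. If $f\in\LIP_{bs}(\X)$ and $\gamma\in AC\big([0,1],\X\big)$, then $f\circ\gamma$ is absolutely continuous, since $|f(\gamma_t)-f(\gamma_s)|\leq\Lip(f)\,\sfd(\gamma_t,\gamma_s)\leq\Lip(f)\int_s^t|\dot\gamma_r|\,\d r$. Moreover, for $\mathcal L^1$-a.e.\ $t$ one estimates $\big|\frac{\d}{\d t}f(\gamma_t)\big|\leq\lip(f)(\gamma_t)\,|\dot\gamma_t|$ directly from the definition \eqref{eq:def_slope} of the slope, using that $|\dot\gamma_t|$ is the metric speed. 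Hence $\lip(f)$ is an upper gradient (for \emph{every} curve, so in particular $\Mod_p$-a.e.), which shows $f\in N^{1,p}(\X)$ with $G_{f,p}\leq\lip(f)$ $\mm$-a.e.

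Next I would pass to a general $f\in W^{1,p}(\X)$. By definition there is a sequence $(f_n)_n\subseteq\LIP_{bs}(\X)$ with $f_n\to f$ in $L^p(\mm)$ and $\lip(f_n)\to|Df|_p$ in $L^p(\mm)$ (after extracting a subsequence realising the infimum defining $\EAM_{{\rm Ch},p}$; strong convergence of the slopes to the minimal relaxed slope is part of the standard relaxation theory and may be invoked). From the Lipschitz case, each $\lip(f_n)$ is a $p$-weak upper gradient of $f_n$. The key step is a closure property of $p$-weak upper gradients under $L^p$-convergence: if $f_n\to f$ and $G_n\to G$ in $L^p(\mm)$ with each $G_n$ a $p$-weak upper gradient of $f_n$, then a suitable representative of $f$ admits $G$ as a $p$-weak upper gradient. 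I expect this closure statement to be the main obstacle, and I would prove it via a Fuglede-type argument: after passing to a subsequence with $\sum_n\|G_n-G\|_{L^p(\mm)}^p<\infty$ and $\sum_n\|f_n-f\|_{L^p(\mm)}^p<\infty$, the functions $\rho\coloneqq\sum_n|G_n-G|$ and $\sum_n|f_n-f|$ lie in $L^p(\mm)$, so the curve family on which $\int_0^1\rho(\gamma_t)|\dot\gamma_t|\,\d t=+\infty$ is $\Mod_p$-negligible. On the complementary family one has $\int_0^1|G_n-G|(\gamma_t)|\dot\gamma_t|\,\d t\to 0$ and $f_n\circ\gamma\to f\circ\gamma$ uniformly along a further $\Mod_p$-a.e.\ set of curves, and passing to the limit in the upper gradient inequality for $f_n$ yields it for $f$.

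Combining the two parts, $|Df|_p$ is a $p$-weak upper gradient of (a Borel version of) $f$, so $f\in N^{1,p}(\X)$ and, by minimality of $G_{f,p}$ in the $\mm$-a.e.\ sense, $G_{f,p}\leq|Df|_p$ holds $\mm$-a.e. A technical point I would be careful about is the interplay between the $\Mod_p$-negligible exceptional families and the almost-everywhere-defined representatives: since $\Mod_p$ controls curve families rather than points, I would rely on the fact that the relevant $L^p$ functions determine $\Mod_p$-a.e.\ curve behaviour, so that the pointwise representatives can be chosen consistently along $\Mod_p$-a.e.\ curve.
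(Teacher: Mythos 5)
The paper itself gives no proof of this proposition: it is stated as a known result and the reader is referred to the monograph of Heinonen--Koskela--Shanmugalingam--Tyson (indeed, it is subsumed by the later-quoted Theorem \ref{thm:density_lip}). So your proposal must be judged against the standard literature argument, whose overall architecture you have correctly reproduced: first the Lipschitz case, then a closure property of \(p\)-weak upper gradients under \(L^p\)-convergence, proved by a Fuglede-type argument. The Lipschitz step is fine (\(\lip(f)\) is a genuine upper gradient along every absolutely continuous curve, and it lies in \(L^p(\mm)\) since \(\mm\) is boundedly finite), and invoking an optimal sequence with \(\lip(f_n)\to|Df|_p\) strongly in \(L^p(\mm)\) is legitimate, as the paper records exactly this characterisation of the minimal relaxed slope.

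The gap sits in the closure step, precisely at the point you defer as ``technical''. A small issue first: \(\sum_n\|G_n-G\|_{L^p(\mm)}^p<\infty\) does \emph{not} imply \(\sum_n|G_n-G|\in L^p(\mm)\); you need summability of the norms themselves, e.g.\ \(\|G_n-G\|_{L^p(\mm)}\leq 2^{-n}\) along a subsequence, and likewise for \(f_n-f\). This is trivially repaired. The serious issue is the asserted uniform convergence \(f_n\circ\gamma\to f\circ\gamma\) along \(\Mod_p\)-a.e.\ curve. What your Fuglede argument actually yields is \(\int_0^1|f_n-f|(\gamma_t)\,|\dot\gamma_t|\,\d t\to 0\) for \(\Mod_p\)-a.e.\ \(\gamma\), hence convergence \(f_n(\gamma_t)\to f(\gamma_t)\) only for \(\mathcal L^1\)-a.e.\ \(t\) in the set \(\{|\dot\gamma|>0\}\). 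It says nothing about the endpoints \(\gamma_0,\gamma_1\), nor about intervals on which \(\gamma\) is constant, and these times cannot be ignored: the weak upper gradient inequality must hold for a single Borel representative \(\bar f\) along the \emph{whole} of \(\Mod_p\)-a.e.\ curve. Your closing principle --- that \(L^p\) data determine \(\Mod_p\)-a.e.\ curve behaviour --- fails at the level of individual points: the family of curves whose initial point lies in a fixed \(\mm\)-null set can have \emph{positive} \(p\)-modulus (take in \(\R^2\) the vertical unit segments emanating from a horizontal segment; admissibility plus Jensen gives modulus at least \(1\), while the set of initial points is Lebesgue-null).

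Closing this gap is the real content of the theorem you are sketching, and it needs one of the following genuine arguments. Either (i) use the upper gradient inequalities for the \(f_n\) together with the Fuglede control on \(\big|\lip(f_n)-|Df|_p\big|\) to show that \((f_n\circ\gamma)_n\) is equi-absolutely continuous along \(\Mod_p\)-a.e.\ curve, upgrade the arclength-a.e.\ convergence to convergence at \emph{every} \(t\in[0,1]\) (hence uniform), and then define \(\bar f\) as this pointwise limit, checking consistency across curves; or (ii) take \(\bar f\coloneqq\limsup_n f_n\), prove only the endpoint inequality \(|\bar f(\gamma_1)-\bar f(\gamma_0)|\leq\int_0^1|Df|_p(\gamma_t)\,|\dot\gamma_t|\,\d t\) for \(\Mod_p\)-a.e.\ curve --- finiteness of the limsup at the endpoints being bootstrapped from the upper gradient inequality itself --- and then use that any curve containing a subcurve from a \(\Mod_p\)-null family itself lies in a \(\Mod_p\)-null family, so that the endpoint inequality along all subcurves recovers absolute continuity of \(\bar f\circ\gamma\) and the pointwise differential bound. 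As written, your proposal asserts the conclusion of this step rather than proving it.
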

We refer the reader to the monograph \cite{HKST15} for a thorough
discussion about this topic.
\subsection{Test plans}\label{ss:test_plans}
To prove the equivalence between \(W^{1,p}(\X)\)
and \(N^{1,p}(\X)\), L.\ Ambrosio, N.\ Gigli, and G.\ Savar\'{e} introduced
in \cite{AmbrosioGigliSavare11,AmbrosioGigliSavare11-3} the notion
of \textbf{test plan}, which furnishes a more `probabilistic' way to
measure the exceptional curves in the weak upper gradient condition.
\medskip

Let \((\X,\sfd,\mm)\) be a metric measure space. Given any \(q\in(1,\infty)\)
and \(t\in(0,1]\), following \cite{Gigli12} we define the
\textbf{\(q\)-energy functional}
\({\rm E}_{q,t}\colon C\big([0,1],\X\big)\to[0,+\infty]\) as
\[
{\rm E}_{q,t}(\gamma)\coloneqq t\bigg(\fint_0^t|\dot\gamma_s|^q\,\d s\bigg)
^{\frac{1}{q}}\quad\text{ if }\gamma\in AC^q\big([0,1],\X\big)
\]
and \({\rm E}_{q,t}(\gamma)\coloneqq +\infty\) otherwise.
It can be readily checked that \({\rm E}_{q,t}\) is a Borel mapping.
\begin{definition}[Test plan \cite{AmbrosioGigliSavare11-3}]
\label{def:test_plan}
Let \((\X,\sfd,\mm)\) be a metric measure space and \(q\in(1,\infty)\).
Then a Borel probability measure \(\ppi\) on \(C\big([0,1],\X\big)\) is
said to be a \textbf{\(q\)-test plan} on \((\X,\sfd,\mm)\) provided:
\begin{itemize}
\item[\(\rm i)\)] There exists a constant \(C>0\) such that
\((\e_t)_\#\ppi\leq C\mm\) holds for every \(t\in[0,1]\). The
minimal such \(C\) is denoted by \({\rm Comp}(\ppi)>0\)
and called the \textbf{compression constant}.
\item[\(\rm ii)\)] The measure \(\ppi\) has
\textbf{finite kinetic \(q\)-energy}, which means that
\[
{\rm KE}_q(\ppi)\coloneqq\int{\rm E}_{q,1}(\gamma)^q\,\d\ppi(\gamma)
<+\infty.
\]
In particular, it holds that \(\ppi\) is concentrated
on \(AC^q\big([0,1],\X\big)\).
\end{itemize}
Also, we say that a Borel probability measure \(\ppi\) on
\(C\big([0,1],\X\big)\) is a \textbf{\(\infty\)-test plan} on
\((\X,\sfd,\mm)\) provided it satisfies item \emph{i)} and
it is concentrated on an equi-Lipschitz family of curves.
\end{definition}
Observe that if \(q,q'\in(1,\infty]\) satisfy \(q'\leq q\), then
every \(q\)-test plan is a \(q'\)-test plan. Moreover, given a
\(q\)-test plan \(\ppi\) and \(s,t\in[0,1]\) with \(s<t\),
it holds that \(({\rm rest}_s^t)_\#\ppi\) is a \(q\)-test plan.
\medskip

The relation between test plans and modulus has been deeply
investigated in \cite{ADMS13}. The following result
(whose proof can be found, \emph{e.g.}, in \cite[Lemma 2.2.26]{GP20})
is sufficient for the purposes of this paper. Being the formulation
slightly different, we report here also its proof.
\begin{lemma}\label{lem:Mod_vs_tp}
Let \((\X,\sfd,\mm)\) be a metric measure space. Let \(p,q\in(1,\infty)\)
be such that \(\frac{1}{p}+\frac{1}{q}=1\). Fix a \(q\)-test plan
\(\ppi\) and a family \(\Gamma\subseteq AC\big([0,1],\X\big)\)
of non-constant curves with \({\rm Mod}_p(\Gamma)=0\). Then there
exists a Borel set \(N\subseteq AC\big([0,1],\X\big)\)
such that \(\Gamma\subseteq N\) and \(\ppi(N)=0\).
\end{lemma}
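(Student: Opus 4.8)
The statement asserts that a $\operatorname{Mod}_p$-negligible family of curves is also null for any given $q$-test plan $\ppi$ (after enclosing it in a Borel set). My plan is to exploit the very definition of the $p$-modulus: since $\operatorname{Mod}_p(\Gamma)=0$, for each $\eps>0$ I can select a Borel function $\rho_\eps\colon\X\to[0,+\infty]$ with $\int\rho_\eps^p\,\d\mm\leq\eps$ and $\int_0^1\rho_\eps(\gamma_t)\,|\dot\gamma_t|\,\d t\geq 1$ for every $\gamma\in\Gamma$. In fact it is standard and convenient to improve this: summing a geometric sequence of such functions, one obtains a single Borel $\rho\in L^p(\mm)$ with $\int\rho^p\,\d\mm<+\infty$ such that $\int_0^1\rho(\gamma_t)\,|\dot\gamma_t|\,\d t=+\infty$ for every $\gamma\in\Gamma$. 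This is the classical reduction, and it turns a "vanishing modulus" statement into a single integrable weight whose line integral diverges along every curve of $\Gamma$.

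Next I would define the Borel set
\[
N\coloneqq\Big\{\gamma\in AC\big([0,1],\X\big)\;\Big|\;
\int_0^1\rho(\gamma_t)\,|\dot\gamma_t|\,\d t=+\infty\Big\},
\]
which contains $\Gamma$ by construction; the map $\gamma\mapsto\int_0^1\rho(\gamma_t)\,|\dot\gamma_t|\,\d t$ is Borel (this is where the measurability remarks of the preliminaries are used), so $N$ is Borel. It then remains to show $\ppi(N)=0$, and for this I would estimate the $\ppi$-expectation of the line integral and show it is finite, so that the integrand is $\ppi$-a.e.\ finite. Concretely, using Fubini--Tonelli,
\[
\int\!\!\int_0^1\rho(\gamma_t)\,|\dot\gamma_t|\,\d t\,\d\ppi(\gamma)
=\int_0^1\!\!\int\rho(\gamma_t)\,|\dot\gamma_t|\,\d\ppi(\gamma)\,\d t,
\]
and I would bound the inner integral by Hölder's inequality with exponents $p,q$, splitting $\rho(\gamma_t)\,|\dot\gamma_t|$ into the two factors.

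The key point is that both factors are controlled by the two defining properties of a $q$-test plan. The compression bound $(\e_t)_\#\ppi\leq\Comp(\ppi)\,\mm$ gives
\[
\int\rho(\gamma_t)^p\,\d\ppi(\gamma)=\int\rho^p\,\d(\e_t)_\#\ppi
\leq\Comp(\ppi)\int\rho^p\,\d\mm,
\]
uniformly in $t$, while the finite kinetic energy controls $\int\!\int_0^1|\dot\gamma_t|^q\,\d t\,\d\ppi$. Combining these via Hölder shows the double integral is finite, bounded by a constant times $\|\rho\|_{L^p(\mm)}\,\KE_q(\ppi)^{1/q}$. Hence the inner integrand is $\ppi$-a.e.\ finite, i.e.\ $\ppi(N)=0$, as desired. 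I do not anticipate a genuine obstacle here; the only mild care is in the measurability of the line-integral functional and in justifying the Fubini exchange, both of which the paper has already flagged as routine.
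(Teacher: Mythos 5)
Your proof is correct and takes essentially the same route as the paper: both extract admissible functions from the condition \({\rm Mod}_p(\Gamma)=0\) and bound \(\int\!\!\int_0^1\rho(\gamma_t)\,|\dot\gamma_t|\,\d t\,\d\ppi(\gamma)\) via H\"{o}lder's inequality combined with the compression bound \((\e_t)_\#\ppi\leq{\rm Comp}(\ppi)\,\mm\) and the finite kinetic \(q\)-energy. The only difference is in packaging: you perform the classical Fuglede-style reduction, summing the admissible functions into a single \(\rho\in L^p(\mm)\) and taking \(N\) to be the Borel set where the line integral diverges, whereas the paper keeps the sequence \(\rho_n\) with \(\int\rho_n^p\,\d\mm\leq 1/n\) and sets \(N=\bigcap_n\big\{\gamma\,:\,\int_0^1\rho_n(\gamma_t)\,|\dot\gamma_t|\,\d t\geq 1\big\}\), estimating \(\ppi(N_n)\) by a Chebyshev-type inequality; both packagings rest on the identical key estimate.
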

\begin{proof}
For any \(n\in\N\), there is a Borel function
\(\rho_n\colon\X\to[0,+\infty]\) such that
\(\int_0^1\rho_n(\gamma_t)\,|\dot\gamma_t|\,\d t\geq 1\)
for every \(\gamma\in\Gamma\) and \(\int\rho_n^p\,\d\mm\leq 1/n\).
Since \((\gamma,t)\mapsto\rho_n(\gamma_t)\,|\dot\gamma_t|\) is a
Borel function, we have that the set \(N_n\coloneqq\big\{\gamma\,:\,
\int_0^1\rho_n(\gamma_t)\,|\dot\gamma_t|\,\d t\geq 1\big\}\) is Borel.
Therefore, the Borel set \(N\coloneqq\bigcap_n N_n\) contains
\(\Gamma\) and satisfies
\begin{align*}
\ppi(N)&\leq\inf_{n\in\N}\ppi(N_n)=
\inf_{n\in\N}\int\1_{N_n}(\gamma)\,\d\ppi(\gamma)
\leq\inf_{n\in\N}\int\!\!\!\int_0^1\rho_n(\gamma_t)\,|\dot\gamma_t|
\,\d t\,\d\ppi(\gamma)\\
&\leq\inf_{n\in\N}\bigg(\int\!\!\!\int_0^1\rho_n^p\circ\e_t\,\d t
\,\d\ppi\bigg)^{\frac{1}{p}}\bigg(\int\!\!\!\int_0^1|\dot\gamma_t|^q\,\d t
\,\d\ppi(\gamma)\bigg)^{\frac{1}{q}}\\
&\leq{\rm Comp}(\ppi)^{\frac{1}{p}}\,{\rm KE}_q(\ppi)^{\frac{1}{q}}
\inf_{n\in\N}\frac{1}{n^{1/p}}=0,
\end{align*}
thus proving the statement.
\end{proof}

A proof of the following continuity result
can be found, \emph{e.g.}, in \cite[Proposition 2.1.4]{GP20}.
\begin{proposition}\label{prop:cont_f_et}
Let \((\X,\sfd,\mm)\) be a metric measure space.
Let \(q\in(1,\infty)\) and \(r\in[1,\infty)\) be given.
Let \(\ppi\) be a \(q\)-test plan on \((\X,\sfd,\mm)\).
Then for any function \(f\in L^r(\mm)\) it holds that
\[
[0,1]\ni t\longmapsto f\circ\e_t\in L^r(\mm)
\quad\text{ is a strongly continuous map.}
\]
\end{proposition}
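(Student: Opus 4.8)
Throughout I fix a Borel representative of \(f\) and write \(C\coloneqq\Comp(\ppi)\); here \(f\circ\e_t\) is understood as an element of \(L^r(\ppi)\). The first thing to record is well-definedness together with a uniform bound. By the compression estimate \((\e_t)_\#\ppi\leq C\mm\) and the change-of-variables formula we get
\[
\int|f\circ\e_t|^r\,\d\ppi=\int|f|^r\,\d(\e_t)_\#\ppi\leq C\int|f|^r\,\d\mm=C\,\|f\|_{L^r(\mm)}^r,
\]
so that \(\|f\circ\e_t\|_{L^r(\ppi)}\leq C^{1/r}\|f\|_{L^r(\mm)}\) for every \(t\in[0,1]\). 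The same inequality, applied to the difference of two Borel representatives of \(f\), shows that \(f\circ\e_t\in L^r(\ppi)\) does not depend on the chosen representative. The plan is then to prove continuity of \(t\mapsto f\circ\e_t\) by the classical three-step scheme: the uniform bound just obtained, a density reduction, and the treatment of a convenient dense class via dominated convergence.

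The uniform bound is the linchpin of the density reduction, and controlling the approximation error \emph{simultaneously at all times} is really the only subtle point of the argument. Indeed, for any \(g\in L^r(\mm)\) and any \(s,t\in[0,1]\) the triangle inequality gives
\[
\|f\circ\e_t-f\circ\e_s\|_{L^r(\ppi)}\leq 2\,C^{1/r}\,\|f-g\|_{L^r(\mm)}+\|g\circ\e_t-g\circ\e_s\|_{L^r(\ppi)},
\]
where the first term is bounded uniformly in \(s,t\). Consequently the set of those \(f\in L^r(\mm)\) for which \(t\mapsto f\circ\e_t\) is continuous is closed in \(L^r(\mm)\), so it suffices to verify the continuity for \(f\) ranging in a dense subset. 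Since \(r<\infty\) and \(\mm\) is a boundedly finite Borel measure on a complete and separable metric space, \(\LIP_{bs}(\X)\) is such a dense subset; note that this is exactly where the exclusion of \(r=\infty\) enters, as Lipschitz functions are not dense in \(L^\infty(\mm)\).

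It remains to treat \(g\in\LIP_{bs}(\X)\), which is in particular bounded, say \(|g|\leq M\). Fix \(t\in[0,1]\) and a sequence \(t_n\to t\). For \emph{every} curve \(\gamma\in C([0,1],\X)\) the continuity of \(r\mapsto\gamma_r\) together with that of \(g\) yields \(g(\gamma_{t_n})\to g(\gamma_t)\); that is, \(g\circ\e_{t_n}\to g\circ\e_t\) pointwise on the whole curve space. Since \(|g\circ\e_{t_n}-g\circ\e_t|^r\leq(2M)^r\) and \(\ppi\) is a probability measure, the constant \((2M)^r\) is \(\ppi\)-integrable and the dominated convergence theorem gives \(\|g\circ\e_{t_n}-g\circ\e_t\|_{L^r(\ppi)}\to 0\). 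As \(t_n\to t\) was arbitrary, \(t\mapsto g\circ\e_t\) is continuous, and combining the three steps proves the proposition. I do not expect any genuine obstacle here: the pointwise continuity on the dense class is immediate from continuity of the curves, and the only care needed is to route the approximation through the uniform compression bound so that the estimate holds at all times at once.
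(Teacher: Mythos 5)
Your proof is correct and is essentially the same argument the paper relies on: the paper gives no inline proof, deferring instead to \cite[Proposition 2.1.4]{GP20}, and the proof there follows exactly your three steps (uniform \(L^r\) bound via the compression constant, reduction by density of \(\LIP_{bs}(\X)\) in \(L^r(\mm)\), and dominated convergence for bounded continuous functions composed with \(\e_t\), using only continuity of the curves). You were also right to read the codomain as \(L^r(\ppi)\) rather than the \(L^r(\mm)\) written in the statement, and your observation that only the compression condition i) of Definition \ref{def:test_plan} is used -- the finite kinetic \(q\)-energy plays no role -- is accurate.
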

\subsubsection{\texorpdfstring{\((\Pi,p)\)}{Pi}-weak upper gradients}
We now focus on the role that test plans play in the Sobolev theory.
The key point is that they can be used to select the `negligible
families of curves' in the weak upper gradient condition, as we
are going to explain in the next definition.
\begin{definition}[\((\Pi,p)\)-weak upper gradient]
Let \((\X,\sfd,\mm)\) be a metric measure space and let \(p,q\in(1,\infty)\)
satisfy \(\frac{1}{p}+\frac{1}{q}=1\). Let \(\Pi\) be a family of
\(q\)-test plans on \(\X\). Let \(f\in L^p(\mm)\) be given. Then we declare
that a function \(G\in L^p(\mm)\) is a \textbf{\((\Pi,p)\)-weak upper gradient}
of \(f\) provided for any \(\ppi\in\Pi\) it holds that
\(f\circ\gamma\in W^{1,1}(0,1)\) for
\(\ppi\)-a.e.\ \(\gamma\in AC^q\big([0,1],\X\big)\) and
\[
\bigg|\frac{\d}{\d t}\,f(\gamma_t)\bigg|\leq G(\gamma_t)\,|\dot\gamma_t|
\quad\text{ for }(\ppi\otimes\mathcal L^1)\text{-a.e.\ }
(\gamma,t)\in AC^q\big([0,1],\X\big)\times[0,1].
\]
We denote by \({\rm G}_{\Pi,p}(f)\) the collection of all \((\Pi,p)\)-weak upper
gradients of \(f\). Also, we define
\[
W^{1,p}_\Pi(\X)\coloneqq\big\{f\in L^p(\mm)\;\big|\;
{\rm G}_{\Pi,p}(f)\neq\emptyset\big\}.
\]
\end{definition}

Observe that \(N^{1,p}(\X)\subseteq W^{1,p}_\Pi(\X)\)
and \(D_p[f]\subseteq{\rm G}_{\Pi,p}(f)\) for all \(f\in N^{1,p}(\X)\)
by Lemma \ref{lem:Mod_vs_tp}.
\begin{lemma}
Let \((\X,\sfd,\mm)\) be a metric measure space and \(p,q\in(1,\infty)\)
satisfy \(\frac{1}{p}+\frac{1}{q}=1\). Let \(\Pi\) be a family
of \(q\)-test plans on \(\X\). Then it holds that the set
\({\rm G}_{\Pi,p}(f)\) is a closed convex lattice of \(L^p(\mm)\)
for every \(f\in W^{1,p}_\Pi(\X)\).
\end{lemma}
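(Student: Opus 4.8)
The plan is to verify the three properties in turn, observing first that the clause ``\(f\circ\gamma\in W^{1,1}(0,1)\) for \(\ppi\)-a.e.\ \(\gamma\)'' in the definition of a \((\Pi,p)\)-weak upper gradient does not involve \(G\); since \({\rm G}_{\Pi,p}(f)\neq\emptyset\) by hypothesis, this clause is automatically satisfied for every \(\ppi\in\Pi\), so in each case it remains only to handle the pointwise inequality. \textbf{Convexity} and the \textbf{lattice} property are both pointwise in nature. Given \(G_0,G_1\in{\rm G}_{\Pi,p}(f)\), \(\lambda\in[0,1]\) and \(\ppi\in\Pi\), I would intersect the two full-measure sets carrying the respective inequalities and, on this intersection, take the convex combination of the two bounds to get \(\big|\frac{\d}{\d t}f(\gamma_t)\big|\leq\big((1-\lambda)G_0+\lambda G_1\big)(\gamma_t)\,|\dot\gamma_t|\). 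For the lattice property, the estimate with \(G_0\vee G_1\) is immediate from \(G_0\vee G_1\geq G_0\), while for \(G_0\wedge G_1\) I would again work on the same intersection, where both right-hand sides \(G_0(\gamma_t)\,|\dot\gamma_t|\) and \(G_1(\gamma_t)\,|\dot\gamma_t|\) bound \(\big|\frac{\d}{\d t}f(\gamma_t)\big|\), and retain the smaller one.

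The substantial point is \textbf{closedness}, and the difficulty is that \(L^p(\mm)\)-convergence of a sequence \((G_n)_n\) does not, by itself, say anything about the traces \((\gamma,t)\mapsto G_n(\gamma_t)\). This is exactly where the compression estimate of Definition \ref{def:test_plan} enters. Fix \(\ppi\in\Pi\) and consider the finite Borel measure \(\mu\coloneqq\int_0^1(\e_t)_\#\ppi\,\d t\) on \(\X\). Since \((\e_t)_\#\ppi\leq\Comp(\ppi)\,\mm\) for every \(t\in[0,1]\), integrating in \(t\) gives \(\mu\leq\Comp(\ppi)\,\mm\); moreover, Tonelli's theorem yields \(\int h\,\d\mu=\int\!\!\int_0^1 h(\gamma_t)\,\d t\,\d\ppi(\gamma)\) for every Borel function \(h\colon\X\to[0,+\infty]\).

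Suppose now \((G_n)_n\subseteq{\rm G}_{\Pi,p}(f)\) and \(G_n\to G\) in \(L^p(\mm)\). Applying the previous identity to \(h=|G_n-G|^p\) gives
\[
\int\!\!\!\int_0^1\big|G_n(\gamma_t)-G(\gamma_t)\big|^p\,\d t\,\d\ppi(\gamma)
=\int|G_n-G|^p\,\d\mu\leq\Comp(\ppi)\int|G_n-G|^p\,\d\mm,
\]
whose right-hand side tends to \(0\). Hence \((\gamma,t)\mapsto G_n(\gamma_t)\) converges to \((\gamma,t)\mapsto G(\gamma_t)\) in \(L^p(\ppi\otimes\mathcal L^1)\), so that, passing to a (non-relabelled) subsequence, \(G_n(\gamma_t)\to G(\gamma_t)\) for \((\ppi\otimes\mathcal L^1)\)-a.e.\ \((\gamma,t)\). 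Since \(\big|\frac{\d}{\d t}f(\gamma_t)\big|\leq G_n(\gamma_t)\,|\dot\gamma_t|\) holds \((\ppi\otimes\mathcal L^1)\)-a.e.\ for each \(n\), letting \(n\to\infty\) along this subsequence yields the same inequality with \(G\) in place of \(G_n\). As \(\ppi\in\Pi\) was arbitrary, this proves \(G\in{\rm G}_{\Pi,p}(f)\), completing the argument.
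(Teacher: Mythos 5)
Your proposal is correct, but your closedness argument runs along a genuinely different track than the paper's. The paper extracts a subsequence along which \(G_n\to G\) pointwise \(\mm\)-a.e., then, for each \emph{fixed} \(t\in[0,1]\), transfers this to \(\ppi\)-a.e.\ convergence of \(G_n\circ\e_t\) using only the absolute continuity \((\e_t)_\#\ppi\ll\mm\), and finally handles the quantifiers with two applications of Fubini's theorem: first to obtain, for \(\mathcal L^1\)-a.e.\ \(t\), the inequality for every \(n\) and \(\ppi\)-a.e.\ \(\gamma\); then, after passing to the limit in \(n\), to reassemble the \((\ppi\otimes\mathcal L^1)\)-a.e.\ statement. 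You instead exploit the quantitative compression bound \((\e_t)_\#\ppi\leq\Comp(\ppi)\,\mm\) to show that the traces \((\gamma,t)\mapsto G_n(\gamma_t)\) converge in \(L^p(\ppi\otimes\mathcal L^1)\), extract a single subsequence converging \((\ppi\otimes\mathcal L^1)\)-a.e., and pass to the limit directly in the product-a.e.\ inequality. Your route buys a cleaner one-shot argument (one subsequence extraction per plan, no slicewise Fubini juggling), at the cost of invoking the full compression constant where the paper's argument needs only \((\e_t)_\#\ppi\ll\mm\) -- a harmless trade here, since every \(q\)-test plan carries such a constant by definition. Two further points in your favour: your explicit verification of convexity and of both lattice operations, and your observation that the requirement \(f\circ\gamma\in W^{1,1}(0,1)\) for \(\ppi\)-a.e.\ \(\gamma\) does not involve \(G\) and is therefore inherited from \({\rm G}_{\Pi,p}(f)\neq\emptyset\), make precise details that the paper's proof leaves implicit or dismisses as clear.
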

\begin{proof}
Let \(f\in W^{1,p}_\Pi(\X)\) be fixed. Clearly, if
\(G_1,G_2\in{\rm G}_{\Pi,p}(f)\), then \(\min\{G_1,G_2\}\in{\rm G}_{\Pi,p}(f)\)
as well. Now fix a sequence \((G_n)_n\subseteq{\rm G}_{\Pi,p}(f)\) such
that \(G_n\to G\in L^p(\mm)\) strongly in \(L^p(\mm)\).
Up to a not relabelled subsequence, we have that \(G_n\to G\)
holds in the pointwise \(\mm\)-a.e.\ sense. Given any \(t\in[0,1]\), it follows
from the assumption \((\e_t)_\#\ppi\ll\mm\) that \(G_n\circ\e_t\to G\circ\e_t\)
holds pointwise \(\ppi\)-a.e.\ as \(n\to\infty\). Also, by Fubini
theorem we see that for \(\mathcal L^1\)-a.e.\ \(t\in[0,1]\) we have
\begin{equation}\label{eq:G_lattice_aux}
\bigg|\frac{\d}{\d t}\,f(\gamma_t)\bigg|\leq G_n(\gamma_t)\,|\dot\gamma_t|
\quad\text{ for every }n\in\N\text{ and }\ppi\text{-a.e.\ }
\gamma\in AC^q\big([0,1],\X\big).
\end{equation}
By letting \(n\to\infty\) in \eqref{eq:G_lattice_aux}, we get that
for \(\mathcal L^1\)-a.e.\ \(t\in[0,1]\) the inequality
\(\big|\frac{\d}{\d t}f(\gamma_t)\big|\leq G(\gamma_t)\,|\dot\gamma_t|\)
is satisfied for \(\ppi\)-a.e.\ \(\gamma\). By using Fubini theorem again, we
conclude that \(G\in{\rm G}_{\Pi,p}(f)\). This shows that \({\rm G}_{\Pi,p}(f)\)
is strongly closed in \(L^p(\mm)\), thus completing the proof of the statement.
\end{proof}
\begin{definition}[Minimal \((\Pi,p)\)-weak upper gradient]
Let \((\X,\sfd,\mm)\) be a metric measure space and \(p,q\in(1,\infty)\)
satisfy \(\frac{1}{p}+\frac{1}{q}=1\). Let \(\Pi\) be a family of \(q\)-test
plans on \(\X\). Fix any function \(f\in W^{1,p}_\Pi(\X)\). Then the (unique)
minimal element of \({\rm G}_{\Pi,p}(f)\) is denoted by \(|Df|_{\Pi,p}\) and
called the \textbf{minimal \((\Pi,p)\)-weak upper gradient} of \(f\).
\end{definition}
Whenever \(\Pi=\{\ppi\}\) is a singleton, we use the shorthand notation
\(W^{1,p}_\sppi(\X)\) and \(|Df|_{\sppi,p}\).
\begin{remark}{\rm
Observe that \(|Df|_{\Pi,p}\leq G_{f,p}\) holds \(\mm\)-a.e.\ for
every \(f\in N^{1,p}(\X)\).
\fr}\end{remark}

As already mentioned above, by considering the totality of test plans it
is possible to recover both the Sobolev space and the minimal relaxed
slope of each Sobolev function:
\begin{theorem}[Sobolev space via test plans \cite{AmbrosioGigliSavare11-3}]
\label{thm:density_lip}
Let \((\X,\sfd,\mm)\) be a metric measure space.
Let \(p,q\in(1,\infty)\) be such that \(\frac{1}{p}+\frac{1}{q}=1\).
Denote by \(\Pi_q\) the family of all \(q\)-test plans on \(\X\).
Then it holds that \(W^{1,p}_{\Pi_q}(\X)=W^{1,p}(\X)\) and
\[
|Df|_{\Pi_q,p}=|Df|_p\quad\text{ for every }f\in W^{1,p}(\X).
\]
In particular, it holds that \(N^{1,p}(\X)=W^{1,p}(\X)\)
and \(G_{f,p}=|Df|_p\) for every \(f\in W^{1,p}(\X)\).
\end{theorem}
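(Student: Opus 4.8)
The plan is to sandwich the minimal test-plan weak upper gradient between the Newtonian and the relaxed gradients, and then to close the loop with a single genuinely hard inequality. First I would collect what is already available. The stated Proposition gives $W^{1,p}(\X)\subseteq N^{1,p}(\X)$ together with $G_{f,p}\le|Df|_p$ $\mm$-a.e., while the observation following the definition of $(\Pi,p)$-weak upper gradient — which rests on Lemma \ref{lem:Mod_vs_tp}, i.e.\ on the fact that every $\Mod_p$-negligible family of curves is $\ppi$-negligible for each $q$-test plan $\ppi$ — yields $N^{1,p}(\X)\subseteq W^{1,p}_{\Pi_q}(\X)$ and $D_p[f]\subseteq{\rm G}_{\Pi_q,p}(f)$, whence $|Df|_{\Pi_q,p}\le G_{f,p}$. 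Chaining these, for every $f\in W^{1,p}(\X)$ one gets
\[
|Df|_{\Pi_q,p}\le G_{f,p}\le|Df|_p\qquad\mm\text{-a.e.}
\]
Consequently the whole statement reduces to the reverse inclusion $W^{1,p}_{\Pi_q}(\X)\subseteq W^{1,p}(\X)$ together with $|Df|_p\le|Df|_{\Pi_q,p}$ $\mm$-a.e.; once this is known all three minimal objects collapse and the \emph{in particular} clause is immediate.

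For this nontrivial direction I would fix $f\in W^{1,p}_{\Pi_q}(\X)$, set $G\coloneqq|Df|_{\Pi_q,p}$, and try to produce a sequence $(f_n)_n\subseteq\LIP_{bs}(\X)$ with $f_n\to f$ in $L^p(\mm)$ and $\lip(f_n)\rightharpoonup h$ weakly in $L^p(\mm)$ for some $h$ with $h\le G$ $\mm$-a.e. Granting such a recovery sequence, $h$ is a $p$-relaxed slope of $f$; since the minimal relaxed slope is the least element of the closed convex lattice of all relaxed slopes (exactly the structure isolated in the lattice lemma above, transported to Cheeger's setting), one concludes $f\in W^{1,p}(\X)$ and $|Df|_p\le h\le G$ $\mm$-a.e., as required. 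The pointwise (as opposed to merely integral) comparison $h\le G$ is obtained by localizing the construction — applying it to $f$ cut off by Lipschitz bump functions and on sub-level sets, and invoking the locality of both $\lip$ and the weak upper gradient calculus.

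The heart of the matter, and the step I expect to be the main obstacle, is the construction of this energy-converging Lipschitz approximation, equivalently the energy bound ${\rm E}_{{\rm Ch},p}(f)\le\frac1p\int G^p\,\d\mm$. This is precisely the density-in-energy theorem of \cite{AmbrosioGigliSavare11-3}, and the tool I would employ is the gradient flow of the Cheeger energy together with the superposition principle: an absolutely continuous curve $(\mu_t)$ in the Wasserstein space $(\mathscr P_q(\X),W_q)$ with uniformly bounded densities $\mu_t\le C\mm$ lifts to a $q$-test plan $\ppi$ with $(\e_t)_\#\ppi=\mu_t$ and kinetic $q$-energy equal to the $W_q$-action of $(\mu_t)$. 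Running the gradient flow of ${\rm E}_{{\rm Ch},p}$ (the heat flow when $p=2$) from a bounded probability density, lifting it to such a $\ppi$, and testing $f$ along $\ppi$, the weak upper gradient inequality $\big|\frac{\d}{\d t}f(\gamma_t)\big|\le G(\gamma_t)\,|\dot\gamma_t|$ combined with the energy-dissipation identity for the flow yields, after integration in time and a Young inequality, the missing bound on the Cheeger energy. The genuinely delicate inputs are the absolute continuity and the density estimates of the flow needed to apply the superposition principle, and the sharp form of the energy-dissipation identity; these are what make this direction substantially deeper than the elementary sandwich of the first paragraph.

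Finally, combining $|Df|_p\le|Df|_{\Pi_q,p}$ with the chain $|Df|_{\Pi_q,p}\le G_{f,p}\le|Df|_p$ forces the equalities $|Df|_{\Pi_q,p}=G_{f,p}=|Df|_p$ and $W^{1,p}(\X)=N^{1,p}(\X)=W^{1,p}_{\Pi_q}(\X)$, which is the full statement.
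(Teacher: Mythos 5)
This theorem is not proved in the paper at all: it is imported, with citation, from \cite{AmbrosioGigliSavare11-3}, so there is no internal argument to compare against. Your proposal is correct and reconstructs precisely the strategy of that cited work: the elementary chain \(|Df|_{\Pi_q,p}\le G_{f,p}\le|Df|_p\) (via Lemma \ref{lem:Mod_vs_tp} and the Proposition comparing \(W^{1,p}(\X)\) with \(N^{1,p}(\X)\)), followed by the genuinely hard converse inequality obtained from the gradient flow of the Cheeger energy, the superposition (Lisini) principle lifting Wasserstein-absolutely-continuous curves with bounded densities to \(q\)-test plans, and the energy-dissipation identity combined with Young's inequality. One minor simplification worth noting: the localization step you invoke to upgrade the energy bound to a pointwise bound is unnecessary, because once \({\rm E}_{{\rm Ch},p}(f)\le\frac{1}{p}\int|Df|_{\Pi_q,p}^p\,\d\mm\) is established (so that \(f\in W^{1,p}(\X)\)), the easy pointwise inequality \(|Df|_{\Pi_q,p}\le|Df|_p\) together with the resulting inequality of integrals forces \(|Df|_p=|Df|_{\Pi_q,p}\) to hold \(\mm\)-almost everywhere.
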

\begin{proposition}\label{prop:ineq_mwug}
Let \((\X,\sfd,\mm)\) be a metric measure space and \(p,q\in(1,\infty)\)
satisfy \(\frac{1}{p}+\frac{1}{q}=1\). Let \(\Pi\subseteq\Pi'\)
be two given families of \(q\)-test plans on \(\X\). Then it holds that
\(W^{1,p}_{\Pi'}(\X)\subseteq W^{1,p}_\Pi(\X)\) and the inequality
\(|Df|_{\Pi,p}\leq|Df|_{\Pi',p}\) is satisfied \(\mm\)-a.e.\ for every
\(f\in W^{1,p}_{\Pi'}(\X)\). In particular, it holds
that \(W^{1,p}(\X)\subseteq W^{1,p}_\Pi(\X)\) and
\[
|Df|_{\Pi,p}\leq|Df|_p\;\;\;\mm\text{-a.e.}
\quad\text{ for every }f\in W^{1,p}(\X).
\]
\end{proposition}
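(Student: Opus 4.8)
The plan is to observe that enlarging the family of test plans only makes the defining condition for a weak upper gradient more restrictive, so that the associated set of admissible functions can only shrink while the minimal element can only grow. Concretely, the whole statement reduces to a single set inclusion together with the characterisation of the master (total) family via Theorem \ref{thm:density_lip}.

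First I would note that, directly from the definition of \((\Pi,p)\)-weak upper gradient, the hypothesis \(\Pi\subseteq\Pi'\) yields the inclusion \({\rm G}_{\Pi',p}(f)\subseteq{\rm G}_{\Pi,p}(f)\) for every \(f\in L^p(\mm)\). Indeed, if \(G\) is a \((\Pi',p)\)-weak upper gradient of \(f\), then \(f\circ\gamma\in W^{1,1}(0,1)\) for \(\ppi\)-a.e.\ \(\gamma\) and the weak upper gradient inequality holds \((\ppi\otimes\mathcal L^1)\)-a.e.\ for \emph{every} \(\ppi\in\Pi'\); since \(\Pi\subseteq\Pi'\), these conditions hold in particular for every \(\ppi\in\Pi\), so \(G\) is a \((\Pi,p)\)-weak upper gradient of \(f\) as well.

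From this inclusion the first two assertions follow at once. If \(f\in W^{1,p}_{\Pi'}(\X)\), then \({\rm G}_{\Pi',p}(f)\neq\emptyset\), whence \({\rm G}_{\Pi,p}(f)\supseteq{\rm G}_{\Pi',p}(f)\neq\emptyset\) and thus \(f\in W^{1,p}_\Pi(\X)\); this proves \(W^{1,p}_{\Pi'}(\X)\subseteq W^{1,p}_\Pi(\X)\). Moreover, \(|Df|_{\Pi',p}\in{\rm G}_{\Pi',p}(f)\subseteq{\rm G}_{\Pi,p}(f)\), and since \(|Df|_{\Pi,p}\) is by definition the minimal element of \({\rm G}_{\Pi,p}(f)\) in the \(\mm\)-a.e.\ sense, I obtain \(|Df|_{\Pi,p}\leq|Df|_{\Pi',p}\) \(\mm\)-a.e.

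Finally, for the last assertion I would apply the first two claims with \(\Pi'\coloneqq\Pi_q\), the family of \emph{all} \(q\)-test plans on \(\X\), which certainly contains the given family \(\Pi\). By Theorem \ref{thm:density_lip} we have \(W^{1,p}_{\Pi_q}(\X)=W^{1,p}(\X)\) and \(|Df|_{\Pi_q,p}=|Df|_p\) for every \(f\in W^{1,p}(\X)\), so the general statement specialises to \(W^{1,p}(\X)\subseteq W^{1,p}_\Pi(\X)\) and \(|Df|_{\Pi,p}\leq|Df|_p\) \(\mm\)-a.e. I do not anticipate any genuine obstacle: the argument is essentially a direct unwinding of the definition of \((\Pi,p)\)-weak upper gradient, the only point deserving a word of care being that minimality of \(|Df|_{\Pi,p}\) and the final inequalities are all understood in the \(\mm\)-a.e.\ sense.
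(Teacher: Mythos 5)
Your proposal is correct and follows exactly the paper's own argument: the inclusion \(\Pi\subseteq\Pi'\) gives \({\rm G}_{\Pi',p}(f)\subseteq{\rm G}_{\Pi,p}(f)\), from which the set inclusion and the \(\mm\)-a.e.\ inequality follow by minimality, and the final claim is obtained by specialising \(\Pi'\) to the family of all \(q\)-test plans and invoking Theorem \ref{thm:density_lip}. No gaps; the only care needed is the \(\mm\)-a.e.\ interpretation of minimality, which you correctly flag.
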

\begin{proof}
To prove the first part of the claim, it suffices to observe
that any \((\Pi',p)\)-weak upper gradient is a \((\Pi,p)\)-weak upper gradient,
thus \(W^{1,p}_{\Pi'}(\X)\subseteq W^{1,p}_\Pi(\X)\) and for any
\(f\in W^{1,p}_{\Pi'}(\X)\) the function \(|Df|_{\Pi',p}\) is a
\((\Pi,p)\)-weak upper gradient of \(f\). Consequently, the last
part of the statement follows from the first one by recalling Theorem
\ref{thm:density_lip}.
\end{proof}
\subsubsection{Plans representing a gradient}
A special class of test plans is that of
\textbf{plans representing a gradient}, which have been introduced by
N.\ Gigli in \cite{Gigli12}. Roughly speaking, they are test plans
whose derivative at time \(0\) coincides with the gradient of a given
Sobolev function, in some generalised sense. These objects will play
a fundamental role in this paper.
\begin{definition}[Test plan representing a gradient \cite{Gigli12}]
\label{def:tp_repr_grad}
Let \((\X,\sfd,\mm)\) be a metric measure space. Let \(p,q\in(1,\infty)\)
satisfy \(\frac{1}{p}+\frac{1}{q}=1\). Let \(f\in W^{1,p}(\X)\) be given.
Then a \(q\)-test plan \(\ppi\) is said to \textbf{\(q\)-represent the gradient
of \(f\)} provided the following properties hold:
\begin{subequations}
\begin{align}\label{eq:tp_repr_grad_claim1}
\frac{f\circ\e_t-f\circ\e_0}{{\rm E}_{q,t}}&\to|Df|_p\circ\e_0
\quad\text{ strongly in }L^p(\ppi)\text{ as }t\searrow 0,\\
\label{eq:tp_repr_grad_claim2}
\bigg(\frac{{\rm E}_{q,t}}{t}\bigg)^{\frac{q}{p}}&\to
|Df|_p\circ\e_0\quad\text{ strongly in }L^p(\ppi)\text{ as }t\searrow 0.
\end{align}
\end{subequations}
\end{definition}
\begin{remark}{\rm
The above definition of test plan representing a gradient is slightly
different from the one introduced in \cite{Gigli12}.
First of all, a plan \(\ppi\) representing a gradient in the sense
of \cite{Gigli12} is not necessarily a test plan; however,
for some \(t\in(0,1)\) it holds that \(({\rm restr}_0^t)_\#\ppi\)
is a test plan on \(\X\). Also, the approach we chose is not the original one
proposed in \cite[Definition 3.7]{Gigli12}, but it is
rather its equivalent reformulation provided
in \cite[Proposition 3.11]{Gigli12}.
\fr}\end{remark}
\begin{lemma}
Let \((\X,\sfd,\mm)\) be a metric measure space. Let \(p,q\in(1,\infty)\)
satisfy \(\frac{1}{p}+\frac{1}{q}=1\).  Let \(\ppi\) be a \(q\)-test plan that
\(q\)-represents the gradient of some function \(f\in W^{1,p}(\X)\). Then
\begin{subequations}
\begin{align}\label{eq:tp_repr_grad_impl1}
\frac{f\circ\e_t-f\circ\e_0}{t}&\to|Df|_p^p\circ\e_0
\quad\text{ strongly in }L^1(\ppi)\text{ as }t\searrow 0,\\
\label{eq:tp_repr_grad_impl2}
\frac{{\rm E}_{q,t}}{t}&\to|Df|_p^{p/q}\circ\e_0
\quad\text{ strongly in }L^q(\ppi)\text{ as }t\searrow 0.
\end{align}
\end{subequations}
\end{lemma}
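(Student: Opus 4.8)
The plan is to derive both statements from the two defining convergences \eqref{eq:tp_repr_grad_claim1} and \eqref{eq:tp_repr_grad_claim2} by purely functional-analytic manipulations, exploiting the exponent identity $p/q=p-1$ (equivalently $1+p/q=p$), which follows from $\frac1p+\frac1q=1$. Writing $h\coloneqq|Df|_p\circ\e_0$ for brevity, I would first establish \eqref{eq:tp_repr_grad_impl2} and then use it, together with \eqref{eq:tp_repr_grad_claim1}, to obtain \eqref{eq:tp_repr_grad_impl1}.

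For \eqref{eq:tp_repr_grad_impl2}, set $g_t\coloneqq({\rm E}_{q,t}/t)^{q/p}\ge 0$, so that \eqref{eq:tp_repr_grad_claim2} reads $g_t\to h$ strongly in $L^p(\ppi)$ as $t\searrow0$, while ${\rm E}_{q,t}/t=g_t^{p/q}$ and the target becomes $g_t^{p/q}\to h^{p/q}$ in $L^q(\ppi)$. The key point is the continuity of the superposition map $g\mapsto g^{p/q}$ from the cone of non-negative elements of $L^p(\ppi)$ into $L^q(\ppi)$; note that it is well defined because $\|g^{p/q}\|_{L^q(\ppi)}^q=\|g\|_{L^p(\ppi)}^p$, using $(p/q)\,q=p$. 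I would argue by the subsequence criterion: given any sequence $t_n\searrow0$, the $L^p(\ppi)$-convergence $g_{t_n}\to h$ yields a subsequence (not relabelled) converging $\ppi$-a.e.\ and dominated by a single function $\Phi\in L^p(\ppi)$; then $g_{t_n}^{p/q}\to h^{p/q}$ holds $\ppi$-a.e.\ and $(g_{t_n}^{p/q})^q=g_{t_n}^p\le\Phi^p\in L^1(\ppi)$, so dominated convergence gives $g_{t_n}^{p/q}\to h^{p/q}$ in $L^q(\ppi)$. Since every sequence admits such a subsequence converging to the same limit, the full limit as $t\searrow0$ follows, which is exactly \eqref{eq:tp_repr_grad_impl2}.

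For \eqref{eq:tp_repr_grad_impl1}, I would factor the difference quotient as $\frac{f\circ\e_t-f\circ\e_0}{t}=A_t\,B_t$, where
\[
A_t\coloneqq\frac{f\circ\e_t-f\circ\e_0}{{\rm E}_{q,t}},\qquad B_t\coloneqq\frac{{\rm E}_{q,t}}{t};
\]
this identity holds $\ppi$-a.e.\ (on $\{{\rm E}_{q,t}=0\}$ the curve is constant on $[0,t]$, so both sides vanish). By \eqref{eq:tp_repr_grad_claim1} one has $A_t\to h$ in $L^p(\ppi)$, while the already established \eqref{eq:tp_repr_grad_impl2} gives $B_t\to h^{p/q}$ in $L^q(\ppi)$. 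Since $\frac1p+\frac1q=1$, the generalised H\"older inequality yields
\[
\|A_tB_t-h\,h^{p/q}\|_{L^1(\ppi)}\le\|A_t\|_{L^p(\ppi)}\,\|B_t-h^{p/q}\|_{L^q(\ppi)}+\|A_t-h\|_{L^p(\ppi)}\,\|h^{p/q}\|_{L^q(\ppi)},
\]
and both summands vanish as $t\searrow0$, because $\|A_t\|_{L^p(\ppi)}$ stays bounded (it converges to $\|h\|_{L^p(\ppi)}$). As the limit equals $h\cdot h^{p/q}=h^{1+p/q}=h^p=|Df|_p^p\circ\e_0$, this is precisely \eqref{eq:tp_repr_grad_impl1}.

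I expect the main obstacle to be the continuity of the power map in the proof of \eqref{eq:tp_repr_grad_impl2}. When $p\le2$, so that $p/q\le1$, there is the clean pointwise bound $|a^{p/q}-b^{p/q}|\le|a-b|^{p/q}$ for $a,b\ge0$, which gives $\|g_t^{p/q}-h^{p/q}\|_{L^q(\ppi)}^q\le\|g_t-h\|_{L^p(\ppi)}^p$ at once; but for $p>2$ no such sub-additive estimate is available, which is exactly why I resort to the domination-plus-subsequence argument rather than a single pointwise inequality. Everything else reduces to H\"older's inequality and the bookkeeping of exponents.
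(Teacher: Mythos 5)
Your proof is correct and follows essentially the same route as the paper: \eqref{eq:tp_repr_grad_impl2} is obtained by the subsequence-plus-domination argument (pointwise a.e.\ convergence together with an $L^p(\ppi)$-dominating function, then dominated convergence for the $q$-th powers), and \eqref{eq:tp_repr_grad_impl1} by factoring the difference quotient as $\frac{f\circ\e_t-f\circ\e_0}{{\rm E}_{q,t}}\cdot\frac{{\rm E}_{q,t}}{t}$ and invoking H\"older's inequality, exactly as the paper does. Your write-up is in fact slightly more explicit on two points the paper leaves implicit, namely the behaviour on $\{{\rm E}_{q,t}=0\}$ and the add-and-subtract estimate behind the $L^p\times L^q\to L^1$ continuity of the product.
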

\begin{proof}
First of all, let us prove \eqref{eq:tp_repr_grad_impl2}. Let \(t_i\searrow 0\)
be fixed. Since \(\big({\rm E}_{q,t_i}/t_i\big)^{q/p}\to|Df|_p\circ\e_0\)
strongly in \(L^p(\ppi)\) as \(i\to\infty\) by \eqref{eq:tp_repr_grad_claim2},
we can assume (possibly passing to a subsequence) that
\({\rm E}_{q,t_i}/t_i\to|Df|_p^{p/q}\circ\e_0\) pointwise \(\ppi\)-a.e.\ as
\(i\to\infty\) and that there exists \(H\in L^p(\ppi)\) such that
\(\big({\rm E}_{q,t_i}/t_i\big)^{q/p}\leq H\) holds \(\ppi\)-a.e.\ for
every \(i\in\N\). In particular, for any \(i\in\N\) we have that
\[
\bigg|\frac{{\rm E}_{q,t_i}}{t_i}-|Df|_p^{p/q}\circ\e_0\bigg|^q\leq
2^{q-1}\bigg(\frac{{\rm E}_{q,t_i}}{t_i}\bigg)^q+2^{q-1}|Df|_p^p\circ\e_0\leq
2^{q-1}\big(H^p+|Df|_p^p\circ\e_0\big)\quad\ppi\text{-a.e..}
\]
Therefore, by dominated convergence theorem we get that
\(\int\big|{\rm E}_{q,t_i}/t_i-|Df|_p^{p/q}\circ\e_0\big|^q\,\d\ppi\to 0\)
as \(i\to\infty\), whence the claimed property \eqref{eq:tp_repr_grad_impl2}
follows (thanks to the arbitrariness of \(t_i\searrow 0\)).

In order to prove \eqref{eq:tp_repr_grad_impl1}, observe that
\eqref{eq:tp_repr_grad_claim1}, \eqref{eq:tp_repr_grad_impl2},
and H\"{o}lder's inequality yield
\[
\frac{f\circ\e_t-f\circ\e_0}{t}=\frac{f\circ\e_t-f\circ\e_0}{{\rm E}_{q,t}}\,
\frac{{\rm E}_{q,t}}{t}\to|Df|_p^{1+\frac{p}{q}}\circ\e_0
=|Df|_p^p\circ\e_0
\]
strongly in \(L^1(\ppi)\) as \(t\searrow 0\). The proof of the
statement is thus achieved.
\end{proof}
The existence of plans representing a gradient
has been proven in \cite[Theorem 3.14]{Gigli12}:
\begin{theorem}[Existence of test plans representing a gradient \cite{Gigli12}]
\label{thm:ex_plan_repr_grad}
Let \((\X,\sfd,\mm)\) be a metric measure space. Let \(p,q\in(1,\infty)\)
satisfy \(\frac{1}{p}+\frac{1}{q}=1\). Fix any \(\mu\in\mathscr P_q(\X)\)
such that \(\mu\leq C\mm\) for some constant \(C>0\). Then for any
\(f\in W^{1,p}(\X)\) there exists a \(q\)-test plan \(\ppi\) that
\(q\)-represents the gradient of \(f\) and satisfies \((\e_0)_\#\ppi=\mu\).
\end{theorem}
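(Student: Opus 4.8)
The plan is to construct $\ppi$ by letting the measure $\mu$ flow in the direction of steepest ascent of $f$ and then lifting the resulting curve of measures to a measure on curves. I would first record the \emph{a priori} bound valid for \emph{every} $q$-test plan, which reduces the two convergences in Definition \ref{def:tp_repr_grad} to the problem of achieving equality. Since $|Df|_p$ is a $p$-weak upper gradient of $f$ (Theorem \ref{thm:density_lip}), for $\ppi$-a.e.\ $\gamma$ one has $|f(\gamma_t)-f(\gamma_0)|\leq\int_0^t|Df|_p(\gamma_s)\,|\dot\gamma_s|\,\d s$; dividing by ${\rm E}_{q,t}(\gamma)$ and applying H\"older's inequality gives
\[
\frac{|f\circ\e_t-f\circ\e_0|}{{\rm E}_{q,t}}\leq\bigg(\fint_0^t|Df|_p^p\circ\e_s\,\d s\bigg)^{1/p}\quad\ppi\text{-a.e.}
\]
By Proposition \ref{prop:cont_f_et} the map $s\mapsto|Df|_p^p\circ\e_s$ is strongly continuous in $L^1(\ppi)$, so integrating the $p$-th power over $\ppi$ and letting $t\searrow0$ yields $\lims_{t\searrow0}\big\|(f\circ\e_t-f\circ\e_0)/{\rm E}_{q,t}\big\|_{L^p(\ppi)}\leq\big\||Df|_p\circ\e_0\big\|_{L^p(\ppi)}$. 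The whole content of the theorem is thus to produce a plan with $(\e_0)_\#\ppi=\mu$ that saturates this inequality (which forces the sign to be $+|Df|_p\circ\e_0$, i.e.\ $f$ genuinely increases along its curves) and that simultaneously satisfies the speed condition \eqref{eq:tp_repr_grad_claim2}.

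To build such a plan I would run the gradient flow of $\nu\mapsto-\int f\,\d\nu$ in the metric space $(\mathscr P_q(\X),\W_q)$ started at $\mu$, via the minimizing-movement scheme with step $\tau$,
\[
\mu^\tau_{k+1}\in\mathop{\rm argmin}_{\nu\in\mathscr P_q(\X)}\bigg\{\frac{\W_q^q(\nu,\mu^\tau_k)}{q\,\tau^{q-1}}-\int f\,\d\nu\bigg\},\qquad\mu^\tau_0=\mu,
\]
carried out first for a bounded truncation of $f$ (so that the functional is finite and the scheme coercive), the general case being recovered by a limiting argument at the very end. Telescoping the minimality inequalities bounds $\sum_k\W_q^q(\mu^\tau_{k+1},\mu^\tau_k)/\tau^{q-1}$ by the total increase of $\int f$, giving uniform $q$-energy estimates; standard compactness and lower semicontinuity then produce, as $\tau\searrow0$, a $q$-absolutely continuous curve $(\mu_t)_{t\in[0,1]}$ in $(\mathscr P_q(\X),\W_q)$ obeying the sharp energy-dissipation identity of the gradient flow. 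This identity encodes, at $t=0$, that $\int f\,\d\mu_t$ grows at the maximal rate $\int|Df|_p^p\,\d\mu$ compatible with the weak upper gradient inequality, the remaining information pinning down the metric speed $|\dot\mu_t|$.

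Next I would invoke the superposition principle to obtain a Borel probability measure $\ppi$ on $AC^q\big([0,1],\X\big)$ with $(\e_t)_\#\ppi=\mu_t$ for all $t$ and $\int\!\!\int_0^1|\dot\gamma_s|^q\,\d s\,\d\ppi=\int_0^1|\dot\mu_s|^q\,\d s$. Then $(\e_0)_\#\ppi=\mu\leq C\mm$, and the finiteness of the $q$-energy of $(\mu_t)$ yields item ii) of Definition \ref{def:test_plan}; item i) follows once $\mu_t\leq C'\mm$ is known uniformly in $t$. Granting this, the energy identity transfers the gradient-flow dissipation to $\ppi$, and combining it with the upper bound from the first step forces both $(f\circ\e_t-f\circ\e_0)/{\rm E}_{q,t}\to|Df|_p\circ\e_0$ and $({\rm E}_{q,t}/t)^{q/p}\to|Df|_p\circ\e_0$ strongly in $L^p(\ppi)$; the passage from convergence of norms (plus a.e.\ convergence along subsequences) to strong convergence is exactly where the H\"older inequality above is seen to be saturated, making conditions \eqref{eq:tp_repr_grad_claim1}--\eqref{eq:tp_repr_grad_claim2} hold simultaneously.

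The main obstacle I anticipate is the compression estimate $\mu_t\leq C'\mm$. As $-\int f$ is a \emph{linear} functional, the flow is of pure transport type with no regularizing diffusive term, so without second-order control on $f$ densities may concentrate and no bound can hold for all times; this is precisely why (cf.\ the Remark after Definition \ref{def:tp_repr_grad}) one secures a genuine test plan only after restricting to a short interval $[0,t_0]$, replacing $\ppi$ by $({\rm restr}_0^{t_0})_\#\ppi$. Extracting a quantitative \emph{short-time} density bound from the minimizing-movement estimates, keeping the $q$-th moments under control so as to remain in $\mathscr P_q(\X)$, and finally removing the truncation of $f$ are the delicate points; by contrast, once the flow and its lift are available, the two rate identities follow rather formally.
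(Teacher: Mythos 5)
The paper does not prove this statement at all --- it is quoted from \cite[Theorem 3.14]{Gigli12} --- so the only meaningful assessment is whether your construction works, and as written it does not. Your first step (the a priori bound via the weak upper gradient inequality, H\"{o}lder, and Proposition \ref{prop:cont_f_et}) is correct. The fatal gap is the one you flag yourself but then dismiss: the compression estimate. The unconstrained minimizing-movement scheme for \(\nu\mapsto-\int f\,\d\nu\) produces, in general, a curve \((\mu_t)_t\) with \(\mu_t\not\ll\mm\) for \emph{every} \(t>0\), so the lifted plan is not a \(q\)-test plan on any interval \([0,t_0]\), and the quantities in \eqref{eq:tp_repr_grad_claim1}--\eqref{eq:tp_repr_grad_claim2} (which involve \(f\circ\e_t\) for an \(L^p(\mm)\)-class \(f\)) do not even make sense along it. Concretely, take \(\X=\R\) with the Euclidean distance, \(\mm=\mathcal L^1\), \(f(x)=(1-|x|)^+\in\LIP_{bs}(\R)\subseteq W^{1,p}(\R)\), and \(\mu=\frac{1}{2}\mathcal L^1|_{[-1,1]}\). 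Each JKO step decouples over particles: for a particle at \(x\in[-1,1]\), the unique minimiser of \(y\mapsto|y-x|^q/(q\tau^{q-1})-f(y)\) is the point at distance \(\tau\) from \(x\) towards the origin (the origin itself if \(|x|\leq\tau\)). Hence the discrete flow is \(x\mapsto{\rm sign}(x)\max\{|x|-k\tau,0\}\) on \([-1,1]\), and the limit curve is \(\mu_t=t\,\delta_0+\frac{1}{2}\mathcal L^1|_{[-(1-t),1-t]}\): an atom of mass \(t\) sits at the origin for every \(t>0\). No constant \(C'\) and no short-time restriction can help; your appeal to the Remark after Definition \ref{def:tp_repr_grad} is misplaced, since concentration under pure transport along a focusing gradient field is instantaneous, whereas that remark concerns a normalisation issue in Gigli's construction, not a failure of compression.

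The example also shows what is really missing. A plan representing the gradient must follow the gradient of \(f\) only infinitesimally at \(t=0\) and may (indeed sometimes must) slow down or stop afterwards: in the example, \(\gamma^x_t\coloneqq x-{\rm sign}(x)\min\{t,|x|/2\}\) yields a test plan with compression constant \(2\) that \(q\)-represents the gradient of \(f\), and it is \emph{not} the gradient flow of \(-\int f\,\d\nu\). Your scheme is ``greedy'' --- it maximises the growth of \(\int f\) at given transport cost --- and that is exactly what creates atoms. Repairing it, for instance by constraining the scheme to \(\big\{\nu\in\mathscr P_q(\X)\,:\,\nu\leq C'\mm\big\}\) (a constraint which is in any case needed to make \(\int f\,\d\nu\) well defined, since \(f\) is only an \(\mm\)-a.e.\ class, and to restore the tightness that \(\W_q\)-bounded sets lack on a non-proper \(\X\)), changes the functional: its slope and its energy-dissipation identity are no longer those of the linear functional, so your final ``formal'' derivation of the two convergences would have to be redone, and one would have to prove that the constrained flow still dissipates at rate \(\int|Df|_p^p\,\d\mu\) at \(t=0^+\). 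This anti-concentration mechanism is the actual content of the theorem, not a delicate technicality left to the end.
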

\subsubsection{Velocity of a test plan}
Another useful tool is the \textbf{velocity of a test plan} \(\ppi\),
which consists of an abstract way to define -- in a suitable sense
-- the velocity \(\gamma'_t\) at time \(t\) of \(\ppi\)-a.e.\ curve
\(\gamma\). Here, the notion of pullback of a normed module enters into play.
\begin{theorem}[Velocity of a test plan \cite{Gigli14}]\label{thm:vel_tp}
Let \((\X,\sfd,\mm)\) be a metric measure space and fix exponents
\(p,q\in(1,\infty)\) such that \(\frac{1}{p}+\frac{1}{q}=1\).
Suppose the Sobolev space \(W^{1,p}(\X)\) is separable. Let \(\ppi\) be a given
\(q\)-test plan on \((\X,\sfd,\mm)\). Then there exists a (unique up to
\(\mathcal L^1\)-a.e.\ equality) family \(\{\ppi'_t\}_{t\in[0,1]}\)
of elements \(\ppi'_t\in\big(\e_t^*L^p(T^*\X)\big)^*\) such that the following
property is satisfied: given any function \(f\in W^{1,p}(\X)\), it holds that
\[
\frac{\d}{\d t}\,f\circ\e_t\coloneqq
\lim_{h\to 0}\frac{f\circ\e_{t+h}-f\circ\e_t}{h}=\ppi'_t(\e_t^*\d_p f)
\quad\text{ for }\mathcal L^1\text{-a.e.\ }t\in[0,1],
\]
where the derivative is taken with respect to the strong topology
of \(L^1(\ppi)\). Moreover, it holds
\[
|\ppi'_t|(\gamma)=|\dot\gamma_t|\quad\text{ for }
(\ppi\otimes\mathcal L^1)\text{-a.e.\ }
(\gamma,t)\in AC^q\big([0,1],\X\big)\times[0,1].
\]
\end{theorem}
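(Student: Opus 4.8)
The plan is to build, for $\mathcal L^1$-a.e.\ $t$, the functional $\ppi'_t$ directly on the generators $\e_t^*\d_p f$ of the pullback module $\e_t^*L^p(T^*\X)$, by setting $\ppi'_t(\e_t^*\d_p f)$ equal to the time derivative of $f\circ\e_t$, and then extending it by $L^\infty(\ppi)$-linearity, continuity and density. The identity for the pointwise norm will be obtained afterwards, by testing against distance functions.

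First I would give meaning to the time derivative. Fix $f\in W^{1,p}(\X)$. By Theorem \ref{thm:density_lip} and Lemma \ref{lem:Mod_vs_tp}, for $\ppi$-a.e.\ $\gamma$ the function $f\circ\gamma$ is absolutely continuous and
\[
\Big|\frac{\d}{\d t}\,f(\gamma_t)\Big|\le|Df|_p(\gamma_t)\,|\dot\gamma_t|
\quad\text{for }\mathcal L^1\text{-a.e. }t.
\]
Setting $g_s(\gamma):=\frac{\d}{\d s}f(\gamma_s)$, this bound together with H\"older's inequality, the compression estimate and the finite kinetic energy shows that $s\mapsto g_s$ is Bochner integrable as a map $[0,1]\to L^1(\ppi)$, with norm controlled by $\Comp(\ppi)^{1/p}\,\||Df|_p\|_{L^p(\mm)}\,{\rm KE}_q(\ppi)^{1/q}$. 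Writing $f(\gamma_{t+h})-f(\gamma_t)=\int_t^{t+h}g_s(\gamma)\,\d s$ and applying the Lebesgue differentiation theorem for Bochner integrals, I would conclude that $t\mapsto f\circ\e_t\in L^1(\ppi)$ is strongly differentiable for $\mathcal L^1$-a.e.\ $t$, with derivative equal to $g_t$; this is the $\frac{\d}{\d t}\,f\circ\e_t$ appearing in the statement.

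Next I would construct the family. Exploiting the separability of $W^{1,p}(\X)$, fix a countable $\mathbb Q$-linear dense subset $\{f_n\}\subseteq W^{1,p}(\X)$ and a single $\mathcal L^1$-null set $N$ outside of which the previous step holds for every $f_n$ simultaneously. For $t\notin N$, define $\ppi'_t$ on the $L^\infty(\ppi)$-span of $\{\e_t^*\d_p f_n\}$ by
\[
\ppi'_t\Big(\sum_i\1_{A_i}\,\e_t^*\d_p f_{n_i}\Big):=\sum_i\1_{A_i}\,\frac{\d}{\d t}(f_{n_i}\circ\e_t).
\]
The crucial point is the pointwise estimate $|\ppi'_t(v)|\le|v|\,|\dot\gamma_t|$ holding $\ppi$-a.e.\ for such $v$. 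Refining the Borel sets $A_i$ into the atoms of the finite algebra they generate reduces the verification to pairwise disjoint $A_i$, where on each atom the coefficient becomes a single differential $\d_p h$ for a finite sum $h$ of the $f_{n_j}$, by linearity of $\d_p$, and the estimate is then exactly the pointwise inequality above combined with $|\e_t^*\d_p g|=|Dg|_p\circ\e_t$. This shows that $\ppi'_t$ is well defined and $L^\infty(\ppi)$-linear with dual pointwise norm $|\ppi'_t|\le|\dot\gamma_t|$, hence extends uniquely to an element of $(\e_t^*L^p(T^*\X))^*$. That the defining identity extends from the $f_n$ to every $f\in W^{1,p}(\X)$ follows by $W^{1,p}$-approximation: the left-hand sides converge in $L^1(\ppi)$ by the displayed bound applied to $f-f_n$, and the right-hand sides converge by continuity of $\ppi'_t$ and of $g\mapsto\e_t^*\d_p g$. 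Uniqueness is then immediate, as two admissible families must agree on the generators $\e_t^*\d_p f$ and therefore, by $L^\infty(\ppi)$-linearity and density, on the whole module.

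The main obstacle I anticipate is upgrading $|\ppi'_t|\le|\dot\gamma_t|$ to an equality. For the reverse inequality I would test $\ppi'_t$ against pullbacks of differentials of distance functions: fixing a countable dense $\{x_k\}\subseteq\X$, the functions $\sfd(\cdot,x_k)$ multiplied by cutoffs supported on large balls belong to $W^{1,p}(\X)$, have minimal weak upper gradient $\le 1$ on the region where the cutoff equals $1$, and agree with $\sfd(\cdot,x_k)$ there; taking $v=\e_t^*\d_p(\sfd(\cdot,x_k)\cdot\text{cutoff})$ gives $|v|\le 1$ and $\ppi'_t(v)(\gamma)=\frac{\d}{\d t}\sfd(\gamma_t,x_k)$ whenever $\gamma_t$ lies in that region. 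Since $\sfd(\gamma_{t+h},\gamma_t)=\sup_k|\sfd(\gamma_{t+h},x_k)-\sfd(\gamma_t,x_k)|$ by density, the metric speed is recovered as $|\dot\gamma_t|=\sup_k|\frac{\d}{\d t}\sfd(\gamma_t,x_k)|$ for $\mathcal L^1$-a.e.\ $t$, whence $|\ppi'_t|(\gamma)\ge|\dot\gamma_t|$ after letting the balls exhaust $\X$. The delicate parts here are the interchange of the supremum over $k$ with the a.e.\ differentiation in $t$, and checking that the localisation affects neither the derivatives nor the norms in the limit; these I would handle via the continuity furnished by Proposition \ref{prop:cont_f_et} and a monotone exhaustion of $\X$ by balls.
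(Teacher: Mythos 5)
Your proposal is correct and follows essentially the same route as the proof the paper appeals to (the paper itself does not reprove the theorem but cites \cite[Theorem 2.3.18]{Gigli14}, adapted as in \cite[Theorem 4.4.7]{GP20} and the subsequent remark): a.e.\ strong \(L^1(\ppi)\)-differentiability of \(t\mapsto f\circ\e_t\) obtained from the pointwise derivatives along curves together with Lebesgue differentiation of Bochner integrals, construction of \(\ppi'_t\) on the generators \(\e_t^*\d_p f_n\) of a countable dense family followed by extension in the dual module \(\big(\e_t^*L^p(T^*\X)\big)^*\) (thereby using only the separability of \(W^{1,p}(\X)\), exactly as the paper's remark prescribes), and recovery of \(|\dot\gamma_t|\) by testing against cutoff distance functions. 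The only imprecision is minor: the interchange of \(\sup_k\) with a.e.\ differentiation in your final step is justified by the a.e.-minimality of the metric speed among \(L^1\) functions dominating the increments (recalled in the paper's preliminaries), rather than by Proposition \ref{prop:cont_f_et}.
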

\begin{remark}{\rm
As we are going to explain, Theorem \ref{thm:vel_tp} can be proven by
repeating almost verbatim the proof of \cite[Theorem 2.3.18]{Gigli14};
the argument to deal with the case \(p=2\) can be easily adapted to treat
general exponents \(p\in(1,\infty)\). First of all, the use of the
approximation by Lipschitz functions can be avoided by arguing as in
\cite[Theorem 4.4.7]{GP20}. Also, in order to prove existence of 
the elements \(\ppi'_t\in\big(\e_t^*L^p(T^*\X)\big)^*\) just the separability
of \(W^{1,p}(\X)\) is needed; in \cite[Theorem 2.3.18]{Gigli14}
the separability of \(L^q(T\X)\), which
implies that of \(W^{1,p}(\X)\), is used to ensure that
\(\e_t^*L^q(T\X)\) can be identified with \(\big(\e_t^*L^p(T^*\X)\big)^*\)
(according to \cite[Theorem 1.6.7]{Gigli14}), while in general the former
is only isometrically embedded into the latter.
\fr}\end{remark}
\begin{proposition}\label{prop:f_et_AC}
Let \((\X,\sfd,\mm)\) be a metric measure space and \(p,q\in(1,\infty)\)
satisfy \(\frac{1}{p}+\frac{1}{q}=1\). Suppose the Sobolev space \(W^{1,p}(\X)\) is separable. Let \(\ppi\) be a \(q\)-test plan on \(\X\).
Then for every function \(f\in W^{1,p}(\X)\) it holds that the curve
\(t\mapsto f\circ\e_t\) belongs to \(AC^q\big([0,1],L^1(\ppi)\big)\) and
\begin{equation}\label{eq:f_et_AC_claim}
f\circ\e_t-f\circ\e_s=\int_s^t\ppi'_r(\e_r^*\d_p f)\,\d r
\quad\text{ for every }s,t\in[0,1]\text{ with }s<t.
\end{equation}
\end{proposition}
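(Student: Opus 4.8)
The plan is to use Theorem~\ref{thm:vel_tp} to single out the candidate derivative $\Phi_r\coloneqq\ppi'_r(\e_r^*\d_p f)$ and to verify both assertions by reducing everything to the scalar behaviour of $f$ along $\ppi$-a.e.\ curve, where the genuine one-dimensional fundamental theorem of calculus is available. First I would record the integrability facts. Since $(\e_t)_\#\ppi\leq\Comp(\ppi)\,\mm$ and $f\in L^p(\mm)$, for each $t$ the function $f\circ\e_t$ lies in $L^p(\ppi)\subseteq L^1(\ppi)$, the measure $\ppi$ being a probability. For the candidate derivative, the dual-module inequality $|\Phi_r|\leq|\ppi'_r|\,|\e_r^*\d_p f|$ together with $|\e_r^*\d_p f|=|Df|_p\circ\e_r$ (pullback preserves the pointwise norm) and Hölder's inequality with exponents $p,q$ gives
\[
\|\Phi_r\|_{L^1(\ppi)}\leq\big\||\ppi'_r|\big\|_{L^q(\ppi)}\,\big\||Df|_p\circ\e_r\big\|_{L^p(\ppi)}\eqqcolon g(r).
\]
The second factor is bounded uniformly in $r$ by $\Comp(\ppi)^{1/p}\,\||Df|_p\|_{L^p(\mm)}$, while the identity $|\ppi'_r|(\gamma)=|\dot\gamma_r|$ from Theorem~\ref{thm:vel_tp} yields $\int_0^1\big\||\ppi'_r|\big\|_{L^q(\ppi)}^q\,\d r={\rm KE}_q(\ppi)<+\infty$. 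Hence $g\in L^q(0,1)$ and, granting the strong measurability that the preliminaries allow us to take for granted, $\Phi$ is Bochner integrable into $L^1(\ppi)$.

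Next I would establish the $AC^q$ estimate directly from the weak upper gradient inequality. By Theorem~\ref{thm:density_lip} we have $W^{1,p}(\X)=N^{1,p}(\X)$ with $G_{f,p}=|Df|_p$, so $|Df|_p$ is a $(\{\ppi\},p)$-weak upper gradient of $f$; thus for $\ppi$-a.e.\ $\gamma$ the map $r\mapsto f(\gamma_r)$ is absolutely continuous with $\big|\frac{\d}{\d r}f(\gamma_r)\big|\leq|Df|_p(\gamma_r)\,|\dot\gamma_r|$ for a.e.\ $r$. Integrating the scalar bound $|f(\gamma_t)-f(\gamma_s)|\leq\int_s^t|Df|_p(\gamma_r)\,|\dot\gamma_r|\,\d r$ against $\ppi$ and using Tonelli gives
\[
\|f\circ\e_t-f\circ\e_s\|_{L^1(\ppi)}\leq\int_s^t\bigg(\int|Df|_p(\gamma_r)\,|\dot\gamma_r|\,\d\ppi(\gamma)\bigg)\d r,
\]
and the inner integral is dominated by $g(r)$ (again by Hölder and $|\dot\gamma_r|=|\ppi'_r|(\gamma)$). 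This is precisely the statement that $t\mapsto f\circ\e_t$ belongs to $AC^q\big([0,1],L^1(\ppi)\big)$.

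For the representation formula \eqref{eq:f_et_AC_claim} I would argue pointwise in $\gamma$ rather than invoke any abstract Banach-valued fundamental theorem of calculus. The identification contained in Theorem~\ref{thm:vel_tp} (extracting a $\ppi$-a.e.\ convergent subsequence from the strong $L^1(\ppi)$-derivative and matching it, via Fubini, with the classical derivative of the absolutely continuous function $r\mapsto f(\gamma_r)$) shows that $\Phi_r(\gamma)=\frac{\d}{\d r}f(\gamma_r)$ for $(\ppi\otimes\mathcal L^1)$-a.e.\ $(\gamma,r)$. Then for $\ppi$-a.e.\ $\gamma$ the ordinary fundamental theorem of calculus yields
\[
(f\circ\e_t-f\circ\e_s)(\gamma)=f(\gamma_t)-f(\gamma_s)=\int_s^t\Phi_r(\gamma)\,\d r=\bigg(\int_s^t\Phi_r\,\d r\bigg)(\gamma),
\]
where the last equality is the pointwise evaluation property of Bochner integrals recalled in the preliminaries. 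Since this holds $\ppi$-a.e., \eqref{eq:f_et_AC_claim} follows as an identity in $L^1(\ppi)$.

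The hard part will be this last step: because $L^1(\ppi)$ fails the Radon--Nikodym property, knowing only that the curve is absolutely continuous and differentiable a.e.\ does \emph{not} by itself entail the integral representation. The device that circumvents this is exactly the reduction to the one-dimensional fundamental theorem of calculus along $\ppi$-a.e.\ curve, legitimate thanks to the curve-wise weak upper gradient inequality and the identification of $\Phi_r$ with the curve-wise derivative. An alternative that avoids the curve-wise matching is a duality argument: testing against $\phi\in L^\infty(\ppi)$, one checks that $t\mapsto\int\phi\,(f\circ\e_t)\,\d\ppi$ is a scalar absolutely continuous function (by the estimate of the second paragraph) whose a.e.\ derivative is $\int\phi\,\Phi_t\,\d\ppi$ (by the strong derivative of Theorem~\ref{thm:vel_tp}), and then applies the scalar fundamental theorem of calculus before undoing the duality; this route still relies on the same $AC^q$ estimate.
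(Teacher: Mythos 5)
Your proof is correct, and its first two steps --- the integrability bound for $r\mapsto\ppi'_r(\e_r^*\d_p f)$ and the $AC^q$ estimate obtained by integrating the curve-wise weak upper gradient inequality against $\ppi$ --- coincide with the paper's argument. The divergence is in the final step. The paper simply invokes \cite[Proposition 1.3.16]{GP20}, an abstract fundamental theorem of calculus for Banach-valued curves: once a curve is absolutely continuous \emph{and} its strong derivative is known to exist at a.e.\ time, the integral representation follows. You instead reprove this fact in the special case at hand: you identify $\ppi'_r(\e_r^*\d_p f)(\gamma)$ with the classical derivative $\frac{\d}{\d r}f(\gamma_r)$ for $(\ppi\otimes\mathcal L^1)$-a.e.\ $(\gamma,r)$ (by subsequence extraction from the strong $L^1(\ppi)$-limit and Fubini), apply the scalar fundamental theorem of calculus along $\ppi$-a.e.\ curve, and conclude via the pointwise evaluation property of Bochner integrals recalled in the preliminaries. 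This curve-wise route is valid and self-contained, at the price of some extra bookkeeping; it also makes explicit the identity $\ppi'_r(\e_r^*\d_p f)(\gamma)=\frac{\d}{\d r}f(\gamma_r)$, which the paper only obtains implicitly.

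One correction to your closing discussion, though it does not affect the validity of your proof: it is \emph{not} true that, because $L^1(\ppi)$ fails the Radon--Nikodym property, absolute continuity together with a.e.\ strong differentiability fails to entail the integral representation. The RNP is relevant only for deducing a.e.\ differentiability \emph{from} absolute continuity; here that differentiability is supplied by Theorem \ref{thm:vel_tp}. Once the strong derivative exists a.e.\ and is dominated by the $AC$ modulus (hence Bochner integrable), the representation formula holds in \emph{any} Banach space: setting $\eta_t\coloneqq f\circ\e_t-f\circ\e_s-\int_s^t\ppi'_r(\e_r^*\d_p f)\,\d r$, the curve $\eta$ is absolutely continuous with $\eta'_t=0$ for a.e.\ $t$ (by the Lebesgue differentiation theorem for Bochner integrals), so for every bounded functional $\varphi$ the scalar function $t\mapsto\varphi(\eta_t)$ is absolutely continuous with vanishing a.e.\ derivative, hence constant equal to $\varphi(\eta_s)=0$; since the dual separates points, $\eta\equiv 0$. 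This is exactly the content of the result the paper cites, so the abstract route does work; your curve-wise argument is a legitimate alternative, not a necessary workaround.
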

\begin{proof}
Let us define
\[
\phi(r)\coloneqq\bigg(\int|\dot\gamma_r|^q\,\d\ppi(\gamma)\bigg)
^{\frac{1}{q}}\quad\text{ for }\mathcal L^1\text{-a.e.\ }r\in[0,1].
\]
Given that \(\int_0^1\phi(r)^q\,\d r=\int\!\!\int_0^1|\dot\gamma_r|^q
\,\d r\,\d\ppi(\gamma)<+\infty\), we have \(\phi\in L^q(0,1)\).
Fix \(f\in W^{1,p}(\X)\) and \(s,t\in[0,1]\) with \(s<t\). It holds that
\begin{align*}
\big\|f\circ\e_t-f\circ\e_s\big\|_{L^1(\sppi)}&=
\int\big|f(\gamma_t)-f(\gamma_s)\big|\,\d\ppi(\gamma)\leq
\int\!\!\!\int_s^t|Df|_p(\gamma_r)\,|\dot\gamma_r|\,\d r\,\d\ppi(\gamma)\\
&\leq\int_s^t\bigg(\int|Df|_p^p\circ\e_r\,\d\ppi\bigg)^{\frac{1}{p}}
\bigg(\int|\dot\gamma_r|^q\,\d\ppi(\gamma)\bigg)^{\frac{1}{q}}\d r\\
&\leq{\rm Comp}(\ppi)^{\frac{1}{p}}\,\big\||Df|_p\big\|_{L^p(\mm)}
\int_s^t\phi(r)\,\d r,
\end{align*}
which shows that the curve \([0,1]\ni t\mapsto f\circ\e_t\in L^1(\ppi)\)
is \(q\)-absolutely continuous. Moreover, we know from
Theorem \ref{thm:vel_tp} that the \(L^1(\ppi)\)-derivative
\(\frac{\d}{\d t}f\circ\e_t\) exists and equals
\(\ppi'_t(\e_t^*\d_p f)\) at \(\mathcal L^1\)-a.e.\ \(t\in[0,1]\).
Therefore, the identity in \eqref{eq:f_et_AC_claim} follows from
\cite[Proposition 1.3.16]{GP20}.
\end{proof}
\section{Master test plans on metric measure spaces}
\subsection{Properties of plans representing a gradient}
In order to prove our main theorem, we first need to study some
properties of plans representing a gradient. Roughly speaking,
we aim to show that if \(\ppi\) represents the gradient of \(f\), then
for any Sobolev function \(g\) the derivative of \(t\mapsto g\circ\e_t\)
at \(t=0\) coincides with \(\langle\nabla g,\nabla f\rangle\circ\e_0\),
in a sense; see Proposition \ref{prop:conv_sc_prod}.
\begin{lemma}\label{lem:bound_above_scal_pr}
Let \((\X,\sfd,\mm)\) be a metric measure space. Let \(p,q\in(1,\infty)\)
satisfy \(\frac{1}{p}+\frac{1}{q}=1\). Suppose the Sobolev space
\(W^{1,p}(\X)\) is separable. Let \(f\in W^{1,p}(\X)\)
be given. Let \(\ppi\) be a \(q\)-test plan that \(q\)-represents the gradient
of \(f\). Then for every function \(G\in L^p(\mm)\) with \(G\geq 0\) there
exists a family \(\{\Phi_t\}_{t\in(0,1)}\subseteq L^1(\ppi)\) such that
\begin{equation}\label{eq:bound_above_scal_pr_claim}
\fint_0^t G\circ\e_s\,|\ppi'_s|\,\d s\leq\Phi_t\;\;\;\ppi\text{-a.e.}
\quad\text{ for every }t\in(0,1)
\end{equation}
and \(\Phi_t\to G\circ\e_0\,|Df|_p^{p/q}\circ\e_0\) strongly in \(L^1(\ppi)\)
as \(t\searrow 0\).
\end{lemma}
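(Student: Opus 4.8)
The plan is to define the dominating family by splitting the integrand through Hölder's inequality. Concretely, I would set
\[
\Phi_t\coloneqq\bigg(\fint_0^t G^p\circ\e_s\,\d s\bigg)^{\frac1p}
\,\frac{{\rm E}_{q,t}}{t}\qquad\text{for every }t\in(0,1),
\]
recalling that $\frac{{\rm E}_{q,t}}{t}=\big(\fint_0^t|\dot\gamma_s|^q\,\d s\big)^{\frac1q}$. The first factor lies in $L^p(\ppi)$, since its $p$-th power has $\ppi$-integral $\fint_0^t\int G^p\,\d(\e_s)_\#\ppi\,\d s\le{\rm Comp}(\ppi)\|G\|_{L^p(\mm)}^p$ by the compression bound, while the second lies in $L^q(\ppi)$ by finiteness of the kinetic energy; hence $\Phi_t\in L^1(\ppi)$. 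Theorem \ref{thm:vel_tp} gives $|\ppi'_s|(\gamma)=|\dot\gamma_s|$ for $(\ppi\otimes\mathcal L^1)$-a.e.\ $(\gamma,s)$, so by Fubini's theorem I may replace $|\ppi'_s|$ by $|\dot\gamma_s|$ inside the average for $\ppi$-a.e.\ $\gamma$, and then \eqref{eq:bound_above_scal_pr_claim} is simply Hölder's inequality for the normalised measure $\frac1t\mathcal L^1|_{[0,t]}$:
\[
\fint_0^t G(\gamma_s)\,|\dot\gamma_s|\,\d s\le
\bigg(\fint_0^t G(\gamma_s)^p\,\d s\bigg)^{\frac1p}
\bigg(\fint_0^t|\dot\gamma_s|^q\,\d s\bigg)^{\frac1q}=\Phi_t(\gamma).
\]

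To identify the limit I would treat the two factors separately. The second is already controlled: the lemma establishing \eqref{eq:tp_repr_grad_impl2} gives $\frac{{\rm E}_{q,t}}{t}\to|Df|_p^{p/q}\circ\e_0$ strongly in $L^q(\ppi)$ as $t\searrow0$. The heart of the matter --- and the step I expect to be the main obstacle --- is to prove that
\[
u_t\coloneqq\bigg(\fint_0^t G^p\circ\e_s\,\d s\bigg)^{\frac1p}
\longrightarrow G\circ\e_0\qquad\text{strongly in }L^p(\ppi)\text{ as }t\searrow0.
\]
First I would apply Proposition \ref{prop:cont_f_et} with $r=1$ to $G^p\in L^1(\mm)$, which makes $s\mapsto G^p\circ\e_s$ a strongly continuous curve in $L^1(\ppi)$; a standard continuity-of-averages estimate, $\big\|\fint_0^t(G^p\circ\e_s-G^p\circ\e_0)\,\d s\big\|_{L^1(\sppi)}\le\sup_{s\in[0,t]}\|G^p\circ\e_s-G^p\circ\e_0\|_{L^1(\sppi)}$, then yields $\fint_0^t G^p\circ\e_s\,\d s\to G^p\circ\e_0$ strongly in $L^1(\ppi)$. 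To pass to the $p$-th root I would invoke the elementary inequality $|a^{1/p}-b^{1/p}|^p\le|a-b|$ (valid for $a,b\ge0$ by subadditivity of $x\mapsto x^{1/p}$), obtaining
\[
\|u_t-G\circ\e_0\|_{L^p(\sppi)}^p\le
\bigg\|\fint_0^t G^p\circ\e_s\,\d s-G^p\circ\e_0\bigg\|_{L^1(\sppi)}
\longrightarrow0.
\]

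Finally, with $u_t\to G\circ\e_0$ in $L^p(\ppi)$ and $\frac{{\rm E}_{q,t}}{t}\to|Df|_p^{p/q}\circ\e_0$ in $L^q(\ppi)$ in hand, I would conclude via the standard fact that a product of an $L^p$-convergent and an $L^q$-convergent family converges in $L^1$: decomposing
\[
\Phi_t-G\circ\e_0\,|Df|_p^{p/q}\circ\e_0=
(u_t-G\circ\e_0)\,\frac{{\rm E}_{q,t}}{t}
+G\circ\e_0\,\bigg(\frac{{\rm E}_{q,t}}{t}-|Df|_p^{p/q}\circ\e_0\bigg)
\]
and bounding each summand by Hölder's inequality (using that $\|{\rm E}_{q,t}/t\|_{L^q(\sppi)}$ stays bounded), both terms vanish as $t\searrow0$, which gives $\Phi_t\to G\circ\e_0\,|Df|_p^{p/q}\circ\e_0$ strongly in $L^1(\ppi)$, as desired. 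The genuinely delicate point is the first-factor convergence $u_t\to G\circ\e_0$, where the $L^1(\ppi)$-continuity of the evaluation along the plan, the averaging at $t\searrow0$, and the concavity of the $p$-th root must be combined in the right order.
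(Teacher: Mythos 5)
Your proof is correct, but it organises the estimate differently from the paper. The paper's proof splits \emph{additively}: writing \(G\circ\e_s=(G\circ\e_s-G\circ\e_0)+G\circ\e_0\), it takes \(\Phi_t\coloneqq R_t+G\circ\e_0\big(\fint_0^t|\ppi'_s|^q\,\d s\big)^{1/q}\) with \(R_t\coloneqq\fint_0^t\big|G\circ\e_s-G\circ\e_0\big|\,|\ppi'_s|\,\d s\), then shows \(R_t\to 0\) in \(L^1(\ppi)\) (H\"{o}lder combined with the \(L^p(\ppi)\)-continuity of \(s\mapsto G\circ\e_s\), i.e.\ Proposition \ref{prop:cont_f_et} with \(r=p\)), while the second summand converges to the desired limit because the factor \(G\circ\e_0\in L^p(\ppi)\) is independent of \(t\) and \({\rm E}_{q,t}/t\to|Df|_p^{p/q}\circ\e_0\) in \(L^q(\ppi)\). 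You instead split \emph{multiplicatively} via pointwise H\"{o}lder, taking \(\Phi_t=u_t\cdot{\rm E}_{q,t}/t\) with \(u_t=\big(\fint_0^t G^p\circ\e_s\,\d s\big)^{1/p}\), and must then prove that the factor \(u_t\) itself converges in \(L^p(\ppi)\); you do this correctly via Proposition \ref{prop:cont_f_et} with \(r=1\) applied to \(G^p\in L^1(\mm)\) together with the elementary inequality \(|a^{1/p}-b^{1/p}|^p\leq|a-b|\), and you conclude by the standard \(L^p\times L^q\to L^1\) product-convergence argument. The two routes consume exactly the same external inputs (Proposition \ref{prop:cont_f_et}, the convergence \eqref{eq:tp_repr_grad_impl2} of \({\rm E}_{q,t}/t\), and the identification \(|\ppi'_s|(\gamma)=|\dot\gamma_s|\) from Theorem \ref{thm:vel_tp}): the paper's additive split avoids any \(p\)-th-root manipulation, since its time-dependent remainder \(R_t\) only needs to vanish rather than converge to a nontrivial limit, whereas your multiplicative split yields a cleaner ``product of convergent factors'' structure at the cost of that one extra elementary inequality. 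Both are complete and correct proofs of the lemma.
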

\begin{proof}
Let \(G\in L^p(\mm)\), \(G\geq 0\) be fixed.
Calling \(R_t\coloneqq\fint_0^t\big|G\circ\e_s-G\circ\e_0\big||\ppi'_s|\,\d s\),
it holds that
\[
\fint_0^t G\circ\e_s\,|\ppi'_s|\,\d s\leq
R_t+G\circ\e_0\fint_0^t|\ppi'_s|\,\d s
\leq R_t+G\circ\e_0\bigg(\fint_0^t|\ppi'_s|^q\,\d s\bigg)^{\frac{1}{q}}
\eqqcolon\Phi_t\quad\ppi\text{-a.e..}
\]
Observe that
\begin{align*}
\int R_t\,\d\ppi&=
\int\!\!\!\fint_0^t\big|G\circ\e_s-G\circ\e_0\big||\ppi'_s|\,\d s\,\d\ppi\\&\leq
\bigg(\int\!\!\!\fint_0^t\big|G\circ\e_s-G\circ\e_0\big|^p\,\d s\,\d\ppi\bigg)
^{\frac{1}{p}}
\bigg(\int\!\!\!\fint_0^t|\ppi'_s|^q\,\d s\,\d\ppi\bigg)^{\frac{1}{q}}\\
&=\bigg(\fint_0^t\big\|G\circ\e_s-G\circ\e_0\big\|_{L^p(\sppi)}^p\,\d s
\bigg)^{\frac{1}{p}}
\bigg(\int\frac{{\rm E}_{q,t}^q}{t^q}\,\d\ppi\bigg)^{\frac{1}{q}}
\to 0\quad\text{ as }t\searrow 0,
\end{align*}
where we used the fact that
\(\int{\rm E}_{q,t}^q/t^q\,\d\ppi\to\int|Df|_p^p\circ\e_0\,\d\ppi\)
as \(t\searrow 0\) and
the continuity of the mapping \([0,1]\ni s\mapsto G\circ\e_s\in L^p(\ppi)\).
Also, we have \(\big(\fint_0^t|\ppi'_s|^q\,\d s\big)^{1/q}
={\rm E}_{q,t}/t\to|Df|_p^{p/q}\circ\e_0\) strongly in \(L^q(\ppi)\) as \(t\searrow 0\),
whence accordingly \(G\circ\e_0\big(\fint_0^t|\ppi'_s|^q\,\d s\big)^{1/q}
\to G\circ\e_0\,|Df|_p^{p/q}\circ\e_0\) strongly in \(L^1(\ppi)\)
as \(t\searrow 0\).
All in all, we proved that \(\Phi_t\to G\circ\e_0\,|Df|_p^{p/q}\circ\e_0\)
in \(L^1(\ppi)\).
\end{proof}
\begin{corollary}\label{cor:aux_countable_tp}
Let \((\X,\sfd,\mm)\) be a metric measure space. Let \(p,q\in(1,\infty)\)
satisfy \(\frac{1}{p}+\frac{1}{q}=1\). Suppose the Sobolev space
\(W^{1,p}(\X)\) is separable. Let \(f\in W^{1,p}(\X)\) be given.
Let \(\ppi\) be a \(q\)-test plan that \(q\)-represents the gradient of \(f\).
Fix any \(g\in W^{1,p}(\X)\) and a sequence \(t_i\searrow 0\).
Then there exist a subsequence \((t_{i_j})_j\) and a function
\(\ell\in L^1(\ppi)\) such that
\begin{equation}\label{eq:aux_countable_tp}
\fint_0^{t_{i_j}}\ppi'_s(\e_s^*\d_p g)\,\d s\rightharpoonup\ell
\quad\text{ weakly in }L^1(\ppi)\text{ as }j\to\infty
\end{equation}
and \(|\ell|\leq|Df|_p^{p/q}\circ\e_0\,|Dg|_p\circ\e_0\)
in the \(\ppi\)-a.e.\ sense.
\end{corollary}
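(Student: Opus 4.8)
The plan is to realise the averaged integrals as a dominated sequence in \(L^1(\ppi)\), extract a weakly convergent subsequence via the Dunford--Pettis theorem, and then bound the weak limit by exploiting the \emph{strong} \(L^1(\ppi)\)-convergence of the dominating family produced by Lemma \ref{lem:bound_above_scal_pr}. Write \(\psi_i\coloneqq\fint_0^{t_i}\ppi'_s(\e_s^*\d_p g)\,\d s\in L^1(\ppi)\), which is a well-defined Bochner integral by Proposition \ref{prop:f_et_AC} (indeed \(\psi_i=(g\circ\e_{t_i}-g\circ\e_0)/t_i\), although the integral form is what we shall use). I would apply Lemma \ref{lem:bound_above_scal_pr} with the nonnegative choice \(G\coloneqq|Dg|_p\in L^p(\mm)\); this yields a family \(\{\Phi_t\}_{t\in(0,1)}\subseteq L^1(\ppi)\) with \(\fint_0^t|Dg|_p\circ\e_s\,|\ppi'_s|\,\d s\leq\Phi_t\) \(\ppi\)-a.e.\ and \(\Phi_t\to|Dg|_p\circ\e_0\,|Df|_p^{p/q}\circ\e_0\eqqcolon\Psi\) strongly in \(L^1(\ppi)\) as \(t\searrow0\).

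First I would establish the pointwise domination \(|\psi_i|\leq\Phi_{t_i}\) \(\ppi\)-a.e. The essential input is the duality inequality in the pullback module: since \(\ppi'_s\in\big(\e_s^*L^p(T^*\X)\big)^*\) and \(\e_s^*\d_p g\in\e_s^*L^p(T^*\X)\), we have \(|\ppi'_s(\e_s^*\d_p g)|\leq|\ppi'_s|\,|\e_s^*\d_p g|=|\ppi'_s|\,|Dg|_p\circ\e_s\) \(\ppi\)-a.e., using \(|\e_s^*\d_p g|=|\d_p g|\circ\e_s=|Dg|_p\circ\e_s\). Since the Bochner integral commutes with evaluation at \(\ppi\)-a.e.\ curve, passing the modulus inside the average gives \(|\psi_i|\leq\fint_0^{t_i}|\ppi'_s(\e_s^*\d_p g)|\,\d s\leq\fint_0^{t_i}|Dg|_p\circ\e_s\,|\ppi'_s|\,\d s\leq\Phi_{t_i}\) \(\ppi\)-a.e., as wanted.

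Next I would extract the weak limit. As \(\Phi_{t_i}\to\Psi\) strongly in \(L^1(\ppi)\), the family \(\{\Phi_{t_i}\}_i\) is uniformly integrable; since \(|\psi_i|\leq\Phi_{t_i}\), the sequence \(\{\psi_i\}_i\) is uniformly integrable and bounded as well. By the Dunford--Pettis theorem it is relatively weakly compact in \(L^1(\ppi)\), so there exist a subsequence \((t_{i_j})_j\) and \(\ell\in L^1(\ppi)\) with \(\psi_{i_j}\rightharpoonup\ell\) weakly, which is \eqref{eq:aux_countable_tp}.

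It remains to prove \(|\ell|\leq\Psi\) \(\ppi\)-a.e. For any Borel set \(A\), testing the weak convergence against \(\1_A\in L^\infty(\ppi)\) and using \(\psi_{i_j}\leq|\psi_{i_j}|\leq\Phi_{t_{i_j}}\) together with the strong convergence \(\Phi_{t_{i_j}}\to\Psi\) in \(L^1(\ppi)\) gives \(\int_A\ell\,\d\ppi=\lim_j\int_A\psi_{i_j}\,\d\ppi\leq\lim_j\int_A\Phi_{t_{i_j}}\,\d\ppi=\int_A\Psi\,\d\ppi\), whence \(\ell\leq\Psi\) \(\ppi\)-a.e.; applying the same argument to \(-\psi_{i_j}\leq\Phi_{t_{i_j}}\) yields \(-\ell\leq\Psi\) \(\ppi\)-a.e. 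Therefore \(|\ell|\leq\Psi=|Df|_p^{p/q}\circ\e_0\,|Dg|_p\circ\e_0\) \(\ppi\)-a.e. The main subtlety I expect is precisely this last step: the bound on the weak limit is \emph{not} automatic from weak convergence and an \(L^1\)-bound on \(|\psi_i|\) alone, but hinges on the dominating functions converging \emph{strongly} (so that \(\int_A\Phi_{t_{i_j}}\to\int_A\Psi\) for every \(A\)), which is exactly the extra information delivered by Lemma \ref{lem:bound_above_scal_pr}.
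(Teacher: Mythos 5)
Your proposal is correct and follows essentially the same route as the paper: both apply Lemma \ref{lem:bound_above_scal_pr} with \(G\coloneqq|Dg|_p\), dominate the averaged integrals \(\ppi\)-a.e.\ by \(\Phi_{t_i}\) via the pointwise duality bound \(|\ppi'_s(\e_s^*\d_p g)|\leq|\ppi'_s|\,|Dg|_p\circ\e_s\), extract a weakly convergent subsequence, and pass the bound to the limit using the strong \(L^1(\ppi)\)-convergence of the \(\Phi_{t_i}\). The only cosmetic difference is the weak-compactness criterion invoked --- you use uniform integrability and Dunford--Pettis directly, whereas the paper first extracts a subsequence dominated by a single majorant \(H\in L^1(\ppi)\) and cites \cite[Lemma 1.3.22]{GP20} --- and your final step (testing against indicators \(\1_A\) for both \(\pm\psi_{i_j}\)) just makes explicit what the paper asserts in one line.
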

\begin{proof}
Pick functions \(\{\Phi_t\}_{t\in(0,1)}\subseteq L^1(\ppi)\) associated
with \(G\coloneqq|Dg|_p\) as in Lemma \ref{lem:bound_above_scal_pr}.
Given that the sequence \((\Phi_{t_i})_i\) is strongly convergent
in \(L^1(\ppi)\), we can find a subsequence
\((t_{i_j})_j\) and a non-negative function \(H\in L^1(\ppi)\) such that
\(\Phi_{t_{i_j}}\leq H\) holds \(\ppi\)-a.e.\ for every \(j\in\N\). Then
\[
\bigg|\fint_0^{t_{i_j}}\ppi'_s(\e_s^*\d_p g)\,\d s\bigg|\leq
\fint_0^{t_{i_j}}|Dg|_p\circ\e_s\,|\ppi'_s|\,\d s
\overset{\eqref{eq:bound_above_scal_pr_claim}}\leq
\Phi_{t_{i_j}}\leq H\;\;\;\ppi\text{-a.e.}\quad\text{ for every }j\in\N.
\]
Therefore, thanks to \cite[Lemma 1.3.22]{GP20} we know that there exists
a function \(\ell\in L^1(\ppi)\) such that (possibly passing to a not
relabelled subsequence) the property in \eqref{eq:aux_countable_tp}
holds. Finally, since
\(\fint_0^{t_{i_j}}\ppi'_s(\e_s^*\d_p g)\,\d s\leq\Phi_{t_{i_j}}\)
holds \(\ppi\)-a.e.\ for every \(j\in\N\) and
\(\Phi_{t_{i_j}}\to|Df|_p^{p/q}\circ\e_0\,|Dg|_p\circ\e_0\) in \(L^1(\ppi)\),
we obtain the \(\ppi\)-a.e.\ inequality
\(|\ell|\leq|Df|_p^{p/q}\circ\e_0\,|Dg|_p\circ\e_0\), getting the statement.
\end{proof}
\begin{proposition}\label{prop:conv_sc_prod}
Let \((\X,\sfd,\mm)\) be a metric measure space. Let \(p,q\in(1,\infty)\)
satisfy \(\frac{1}{p}+\frac{1}{q}=1\).  Suppose the Sobolev space
\(W^{1,p}(\X)\) is separable. Fix \(f\in W^{1,p}(\X)\). Let \(\ppi\)
be a \(q\)-test plan that \(q\)-represents the gradient of \(f\). Then for any
sequence \(t_i\searrow 0\) there exist a subsequence \((t_{i_j})_j\) and an
element \(\eta\in{\sf Dual}(\e_0^*\d_p f)\), where the mapping
\(\sf Dual\) is defined as in \eqref{eq:def_Dual}, such that
\begin{equation}\label{eq:conv_sc_prod_claim}
\frac{g\circ\e_{t_{i_j}}-g\circ\e_0}{t_{i_j}}\rightharpoonup
\eta(\e_0^*\d_p g)\quad\text{ weakly in }L^1(\ppi)\text{ as }j\to\infty,
\text{ for every }g\in W^{1,p}(\X).
\end{equation}
\end{proposition}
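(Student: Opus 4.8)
The plan is to first extract a limit element $\eta$ that works simultaneously for all $g$ in a countable dense set, and then upgrade to a genuine element of ${\sf Dual}(\e_0^*\d_p f)$. Let me outline the steps.

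First, I would observe that by Proposition \ref{prop:f_et_AC} the difference quotient can be rewritten as an average of the velocity: for every $g\in W^{1,p}(\X)$,
\[
\frac{g\circ\e_{t}-g\circ\e_0}{t}=\fint_0^{t}\ppi'_s(\e_s^*\d_p g)\,\d s
\quad\ppi\text{-a.e.}
\]
This identifies the object of interest with the averages studied in Corollary \ref{cor:aux_countable_tp}. Fixing a countable dense subset $(g_n)_n$ of $W^{1,p}(\X)$ and applying Corollary \ref{cor:aux_countable_tp} together with a diagonal argument, I would extract a single subsequence $(t_{i_j})_j$ along which, for every $n$, the averages $\fint_0^{t_{i_j}}\ppi'_s(\e_s^*\d_p g_n)\,\d s$ converge weakly in $L^1(\ppi)$ to some limit $\ell_n$ satisfying the bound $|\ell_n|\leq|Df|_p^{p/q}\circ\e_0\,|Dg_n|_p\circ\e_0$ $\ppi$-a.e.

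Second, I would define a candidate functional. The map $g\mapsto\ell_g$ is linear on the dense set (weak limits respect linear combinations), and the pointwise bound shows it is continuous in the appropriate sense, so it extends to a linear map on all of $W^{1,p}(\X)$ with $|\ell_g|\leq|Df|_p^{p/q}\circ\e_0\,|Dg|_p\circ\e_0$. The key is to realize this as an element $\eta$ of the dual pullback module $\big(\e_0^*L^p(T^*\X)\big)^*$. Since the elements $\e_0^*\d_p g$ generate $\e_0^*L^p(T^*\X)$ as $g$ ranges over $W^{1,p}(\X)$ (and one multiplies by $L^\infty$ functions), I would define $\eta(\e_0^*\d_p g):=\ell_g$ and verify it is well defined, $L^\infty(\ppi)$-linear, and bounded, using the pointwise estimate to control the norm $|\eta|\leq|Df|_p^{p/q}\circ\e_0$ $\ppi$-a.e.

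Third, and this is where I expect the main obstacle, I must show $\eta\in{\sf Dual}(\e_0^*\d_p f)$, \emph{i.e.}, that $\eta(\e_0^*\d_p f)=|\e_0^*\d_p f|^p=|\eta|^q$ $\ppi$-a.e. Taking $g=f$ in the averages and invoking \eqref{eq:tp_repr_grad_impl1}, the difference quotients $\frac{f\circ\e_t-f\circ\e_0}{t}$ converge \emph{strongly} in $L^1(\ppi)$ to $|Df|_p^p\circ\e_0=|\e_0^*\d_p f|^p$, which forces $\ell_f=\eta(\e_0^*\d_p f)=|\e_0^*\d_p f|^p$ $\ppi$-a.e. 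Combining this with the established bound $\eta(\e_0^*\d_p f)\leq|Df|_p^{p/q}\circ\e_0\,|Df|_p\circ\e_0=|Df|_p^p\circ\e_0$ and the norm estimate $|\eta|\leq|Df|_p^{p/q}\circ\e_0$, the Young/Hölder equality case pins down both $\eta(\e_0^*\d_p f)=|\e_0^*\d_p f|^p$ and $|\eta|=|Df|_p^{p/q}\circ\e_0=|\e_0^*\d_p f|^{p/q}$, whence $|\eta|^q=|\e_0^*\d_p f|^p$. The delicate point is checking that the strong convergence for $g=f$ coexists with the weak convergence chosen for the dense sequence along the \emph{same} subsequence; I would arrange the diagonal extraction so that $f$ is included among the test functions (or handled separately using that strong convergence passes to any subsequence), thereby guaranteeing consistency of the two limits.
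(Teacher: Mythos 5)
Your Steps 1 and 3 essentially coincide with the paper's argument: extract weak limits along a diagonal subsequence for a countable dense family via Corollary \ref{cor:aux_countable_tp}, realize the limit map as an element \(\eta\in\big(\e_0^*L^p(T^*\X)\big)^*\) with \(|\eta|\leq|Df|_p^{p/q}\circ\e_0\) (the paper invokes \cite[Proposition 3.2.9]{GP20} for this extension), and then combine the strong \(L^1(\ppi)\)-convergence of \((f\circ\e_t-f\circ\e_0)/t\) to \(|\e_0^*\d_p f|^p\) with the H\"{o}lder-equality chain \(|\e_0^*\d_p f|^p=\eta(\e_0^*\d_p f)\leq|\eta|\,|\e_0^*\d_p f|\leq|\e_0^*\d_p f|^p\) to conclude \(\eta\in{\sf Dual}(\e_0^*\d_p f)\). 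Your device of adjoining \(f\) to the countable family is a legitimate, and slightly more direct, way to secure the identity \(\eta(\e_0^*\d_p f)=|\e_0^*\d_p f|^p\) than the paper's route (which instead applies the general-\(g\) convergence to \(g\coloneqq f\)).

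There is, however, a genuine gap: the proposition asserts the weak convergence \eqref{eq:conv_sc_prod_claim} for \emph{every} \(g\in W^{1,p}(\X)\) along a \emph{single} subsequence, while your proposal only establishes it for \(g\) in the countable dense set (plus \(f\)). Extending \(g\mapsto\ell_g\) by continuity, as in your second step, merely \emph{defines} the candidate limit \(\eta(\e_0^*\d_p g)\) for general \(g\); it does not show that the difference quotients \((g\circ\e_{t_{i_j}}-g\circ\e_0)/t_{i_j}\) actually converge weakly to it. Nor can the trick of enlarging the countable family repair this, since the subsequence must be fixed before \(g\) is chosen (the quantifiers read: one subsequence, then all \(g\)). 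What is missing is an equicontinuity (``3-\(\eps\)'') argument: for \(h\in L^\infty(\ppi)\) and \(g_n\to g\) in \(W^{1,p}(\X)\), one splits
\[
\bigg|\int h\,\frac{g\circ\e_{t_i}-g\circ\e_0}{t_i}\,\d\ppi
-\int h\,\eta(\e_0^*\d_p g)\,\d\ppi\bigg|\leq A_{i,n}+B_{i,n}+C_n,
\]
and the crucial point is that the term \(A_{i,n}\), coming from replacing \(g\) by \(g_n\) inside the difference quotient, is bounded by
\(M\,{\rm Comp}(\ppi)^{1/p}\,\|h\|_{L^\infty(\sppi)}\,\|g-g_n\|_{W^{1,p}(\X)}\)
\emph{uniformly in} \(i\), where \(M\) is a bound for \(\big(\int{\rm E}_{q,t_i}^q/t_i^q\,\d\ppi\big)^{1/q}\); such a uniform bound exists precisely because \(\ppi\) \(q\)-represents the gradient of \(f\), via \eqref{eq:tp_repr_grad_impl2}. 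This is Step 2 of the paper's proof, and it cannot be omitted: in the proof of Theorem \ref{thm:master_tp} the proposition is applied to plans representing the gradients of the dense functions \(f_n\), but with \(g\) equal to an \emph{arbitrary} Sobolev function, so the general-\(g\) statement is exactly what gets used downstream.
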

\begin{proof} We subdivide the proof into several steps:\\
{\color{blue}\textsc{Step 1.}}
Fix any countable, strongly dense \(\mathbb Q\)-linear subspace
\(\mathcal C\) of \(W^{1,p}(\X)\). Therefore, it holds that
\(\mathcal V\coloneqq\{\e_0^*\d_p g\,:\,g\in\mathcal C\}\)
is a generating \(\mathbb Q\)-linear subspace of \(\e_0^*L^p(T^*\X)\).
Thanks to Corollary \ref{cor:aux_countable_tp} and a diagonalisation
argument, the sequence \(t_i\searrow 0\) admits a (not relabelled)
subsequence such that \(\fint_0^{t_i}\ppi'_s(\e_s^*\d_p g)\,\d s
\rightharpoonup\ell_g\) weakly in \(L^1(\ppi)\) for every \(g\in\mathcal C\),
for some limit functions \(\ell_g\in L^1(\ppi)\) satisfying
\(|\ell_g|\leq|Df|_p^{p/q}\circ\e_0\,|Dg|_p\circ\e_0\) in the
\(\ppi\)-a.e.\ sense. Let us define
\[
L\colon\mathcal V\to L^1(\ppi),\quad L(\e_0^*\d_p g)\coloneqq\ell_g
\text{ for every }\e_0^*\d_p g\in\mathcal V.
\]
Given that \(\big|L(\e_0^*\d_p g)\big|\leq|Df|_p^{p/q}\circ\e_0\,|\e_0^*\d_p g|\)
holds \(\ppi\)-a.e., we deduce that \(L\) is a well-defined, linear,
and continuous mapping. Therefore, \cite[Proposition 3.2.9]{GP20}
grants the existence of a unique element
\(\eta\in\big(\e_0^*L^p(T^*\X)\big)^*\) such that
\(\eta(\e_0^*\d_p g)=L(\e_0^*\d_p g)\) is satisfied for every \(g\in\mathcal C\)
and \(|\eta|\leq|Df|_p^{p/q}\circ\e_0=|\e_0^*\d_p f|^{p/q}\) in the
\(\ppi\)-a.e.\ sense. Accordingly, it holds that
\begin{equation}\label{eq:conv_sc_prod_aux}
\fint_0^{t_i}\ppi'_s(\e_s^*\d_p g)\,\d s\rightharpoonup\eta(\e_0^*\d_p g)
\quad\text{ weakly in }L^1(\ppi)\text{ as }i\to\infty,
\text{ for every }g\in\mathcal C.
\end{equation}
{\color{blue}\textsc{Step 2.}}
Let \(g\in W^{1,p}(\X)\) be fixed. Choose any sequence
\((g_n)_n\subseteq\mathcal C\) such that \(g_n\to g\) strongly
in \(W^{1,p}(\X)\). Fix any \(h\in L^\infty(\ppi)\) and some
constant \(M>0\) satisfying \(\int{\rm E}_{q,t_i}^q/t_i^q\,\d\ppi\leq M^q\)
for every \(i\in\N\). Given any \(i,n\in\N\), we can estimate
\begin{equation}\label{eq:conv_sc_prod_aux2}
\bigg|\int h\fint_0^{t_i}\ppi'_s(\e_s^*\d_p g)\,\d s\,\d\ppi
-\int h\,\eta(\e_0^*\d_p g)\,\d\ppi\bigg|\leq A_{i,n}+B_{i,n}+C_n,
\end{equation}
where we set
\begin{align*}
A_{i,n}&\coloneqq\bigg|\int h\fint_0^{t_i}\ppi'_s\big(\e_s^*\d_p(g-g_n)\big)
\,\d s\,\d\ppi\bigg|,\\
B_{i,n}&\coloneqq\bigg|\int h\fint_0^{t_i}\ppi'_s(\e_s^*\d_p g_n)
\,\d s\,\d\ppi-\int h\,\eta(\e_0^*\d_p g_n)\,\d\ppi\bigg|,\\
C_n&\coloneqq\bigg|\int h\,\eta\big(\e_0^*\d_p(g_n-g)\big)\,\d\ppi\bigg|.
\end{align*}
Observe that
\begin{align*}
A_{i,n}&\leq\|h\|_{L^\infty(\sppi)}\int\!\!\!\fint_0^{t_i}
\big|D(g-g_n)\big|_p\circ\e_s\,|\ppi'_s|\,\d s\,\d\ppi\\
&\leq\|h\|_{L^\infty(\sppi)}\bigg(\int\!\!\!\fint_0^{t_i}\big|D(g-g_n)\big|_p^p
\circ\e_s\,\d s\,\d\ppi\bigg)^{\frac{1}{p}}\bigg(\int\!\!\!\fint_0^{t_i}
|\ppi'_s|^q\,\d s\,\d\ppi\bigg)^{\frac{1}{q}}\\
&\leq{\rm Comp}(\ppi)^{\frac{1}{p}}\,\|h\|_{L^\infty(\sppi)}
\bigg(\int\big|D(g-g_n)\big|_p^p\,\d\mm\bigg)^{\frac{1}{p}}
\bigg(\int\frac{{\rm E}_{q,t_i}^q}{t_i^q}\,\d\ppi\bigg)^{\frac{1}{q}}\\
&\leq M\,{\rm Comp}(\ppi)^{\frac{1}{p}}\,\|h\|_{L^\infty(\sppi)}
\,\|g-g_n\|_{W^{1,p}(\X)}.
\end{align*}
Moreover, it follows from \eqref{eq:conv_sc_prod_aux} that
\(\lim_{i\to\infty}B_{i,n}=0\) for any given \(n\in\N\).
Finally, we estimate
\begin{align*}
C_n&\leq\|h\|_{L^\infty(\sppi)}\int\big|D(g_n-g)
\big|_p\circ\e_0\,|\eta|\,\d\ppi\\&\leq\|h\|_{L^\infty(\sppi)}
\bigg(\int\big|D(g_n-g)\big|_p^p\circ\e_0\,\d\ppi\bigg)^{\frac{1}{p}}
\bigg(\int|\eta|^q\,\d\ppi\bigg)^{\frac{1}{q}}\\
&\leq{\rm Comp}(\ppi)^{\frac{1}{p}}\,\|h\|_{L^\infty(\sppi)}
\bigg(\int\big|D(g_n-g)\big|_p^p\,\d\mm\bigg)^{\frac{1}{p}}
\bigg(\int|Df|_p^p\circ\e_0\,\d\ppi\bigg)^{\frac{1}{q}}\\
&\leq{\rm Comp}(\ppi)\,\|h\|_{L^\infty(\sppi)}\,
\|g_n-g\|_{W^{1,p}(\X)}\,\|f\|_{W^{1,p}(\X)}^{p/q}.
\end{align*}
Hence, given any \(\eps>0\) we can find \(n\in\N\) such that
\(A_{i,n}+C_n\leq\eps\) for every \(i\in\N\). Then
\[
\lims_{i\to\infty}\bigg|\int h\fint_0^{t_i}\ppi'_s(\e_s^*\d_p g)\,\d s\,\d\ppi
-\int h\,\eta(\e_0^*\d_p g)\,\d\ppi\bigg|
\overset{\eqref{eq:conv_sc_prod_aux2}}\leq\eps+\lim_{i\to\infty}B_{i,n}=\eps.
\]
By letting \(\eps\searrow 0\), we conclude that
\(\lim_{i\to\infty}\int h\fint_0^{t_i}\ppi'_s(\e_s^*\d_p g)\,\d s\,\d\ppi
=\int h\,\eta(\e_0^*\d_p g)\,\d\ppi\) holds for every \(h\in L^\infty(\ppi)\),
whence
\(\fint_0^{t_i}\ppi'_s(\e_s^*\d_p g)\,\d s\rightharpoonup\eta(\e_0^*\d_p g)\)
weakly in \(L^1(\ppi)\) as \(i\to\infty\). Given that we have
\((g\circ\e_{t_i}-g\circ\e_0)/t_i=\fint_0^{t_i}\ppi'_s(\e_s^*\d_p g)\,\d s\)
by Proposition \ref{prop:f_et_AC}, we have proven that
\begin{equation}\label{eq:conv_sc_prod_aux3}
\frac{g\circ\e_{t_i}-g\circ\e_0}{t_i}\rightharpoonup\eta(\e_0^*\d_p g)
\quad\text{ weakly in }L^1(\ppi)\text{ as }i\to\infty,
\text{ for every }g\in W^{1,p}(\X).
\end{equation}
{\color{blue}\textsc{Step 3.}}
We aim to show that \(\eta\in{\sf Dual}(\e_0^*\d_p f)\).
Since \(\ppi\) represents the gradient of \(f\), one has
\[
\frac{f\circ\e_{t_i}-f\circ\e_0}{t_i}\to|Df|_p^p\circ\e_0
=|\e_0^*\d_p f|^p\quad\text{ strongly in }L^1(\ppi)\text{ as }i\to\infty.
\]
Hence, by applying \eqref{eq:conv_sc_prod_aux3} with \(g\coloneqq f\) we
obtain that \(\eta(\e_0^*\d_p f)=|\e_0^*\d_p f|^p\) holds \(\ppi\)-a.e.. Then
\[
|\e_0^*\d_p f|^p=\eta(\e_0^*\d_p f)\leq|\eta||\e_0^*\d_p f|\leq
|\e_0^*\d_p f|^{\frac{p}{q}+1}=|\e_0^*\d_p f|^p\quad\ppi\text{-a.e.,}
\]
whence \(|\eta|=|\e_0^*\d_p f|^{p/q}\) holds \(\ppi\)-a.e.\ and accordingly
\(\eta\in{\sf Dual}(\e_0^*\d_p f)\), as required.
\end{proof}
Albeit not strictly needed for the purposes of this article, let us
illustrate a reinforcement of Proposition \ref{prop:conv_sc_prod}
in the case of an infinitesimally Hilbertian ambient space:
\begin{corollary}
Let \((\X,\sfd,\mm)\) be an infinitesimally Hilbertian metric measure space.
Fix any function \(f\in W^{1,2}(\X)\). Let \(\ppi\) be a \(2\)-test plan on
\(\X\) that \(2\)-represents the gradient of \(f\). Then
\[
\frac{g\circ\e_t-g\circ\e_0}{t}\rightharpoonup
\langle\nabla g,\nabla f\rangle\circ\e_0\quad
\text{ weakly in }L^1(\ppi)\text{ as }t\searrow 0,\text{ for every }
g\in W^{1,2}(\X).
\]
\end{corollary}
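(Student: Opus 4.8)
The plan is to reduce everything to the $p=q=2$ instance of Proposition \ref{prop:conv_sc_prod} and to use infinitesimal Hilbertianity to pin down the abstract limit $\eta$ uniquely, thereby turning the subsequential weak convergence of that proposition into a genuine limit with an explicit value.

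First I would check that the pullback module $\e_0^*L^2(T^*\X)$ is again a Hilbert module. By infinitesimal Hilbertianity $L^2(T^*\X)$ obeys the parallelogram rule \eqref{eq:parall_rule}; since $|\e_0^*v|=|v|\circ\e_0$ and $\e_0^*$ is linear, the rule passes to all elements of the form $\e_0^*v$, and by density of their $L^\infty(\ppi)$-linear combinations it holds on the whole of $\e_0^*L^2(T^*\X)$. Hence the map ${\sf Dual}$ on this module is single-valued, its unique value being the Riesz isomorphism; in particular ${\sf Dual}(\e_0^*\d_2 f)$ contains exactly one element, namely $\eta:={\sf R}_{\e_0^*L^2(T^*\X)}(\e_0^*\d_2 f)$.

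Next I would compute the action of $\eta$ on $\e_0^*\d_2 g$. Using the standard Hilbert-module identity ${\sf R}(v)(w)=\langle v,w\rangle$ and the fact that the pointwise scalar product is obtained from the pointwise norm by polarisation -- so that it commutes with the linear, norm-preserving pullback $\e_0^*$ -- I would obtain
\[
\eta(\e_0^*\d_2 g)=\langle\e_0^*\d_2 f,\e_0^*\d_2 g\rangle=\langle\d_2 f,\d_2 g\rangle\circ\e_0.
\]
Since ${\sf R}_{L^2(T^*\X)}$ preserves the pointwise norm, and hence the pointwise scalar product, one has $\langle\d_2 f,\d_2 g\rangle=\langle\nabla f,\nabla g\rangle=\langle\nabla g,\nabla f\rangle$, so that $\eta(\e_0^*\d_2 g)=\langle\nabla g,\nabla f\rangle\circ\e_0$ is independent of the chosen sequence and of the representation of $g$.

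Finally I would upgrade subsequential to full convergence. Proposition \ref{prop:conv_sc_prod} provides, for any sequence $t_i\searrow 0$, a subsequence along which $(g\circ\e_{t_{i_j}}-g\circ\e_0)/t_{i_j}\rightharpoonup\eta(\e_0^*\d_2 g)$ weakly in $L^1(\ppi)$; by the previous step this limit equals $\langle\nabla g,\nabla f\rangle\circ\e_0$ regardless of the subsequence. Testing against an arbitrary $h\in L^\infty(\ppi)$, every sequence of reals $\int h\,(g\circ\e_{t_i}-g\circ\e_0)/t_i\,\d\ppi$ thus admits a subsequence converging to the fixed number $\int h\,\langle\nabla g,\nabla f\rangle\circ\e_0\,\d\ppi$; the elementary subsequence criterion then forces the whole sequence to converge to that number, and since $t_i\searrow 0$ was arbitrary the convergence holds as $t\searrow 0$. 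This is exactly the asserted weak $L^1(\ppi)$ convergence. The conceptual crux -- and the step I expect to require the most care -- is the identification $\eta(\e_0^*\d_2 g)=\langle\nabla g,\nabla f\rangle\circ\e_0$, resting on the pullback of a Hilbert module being Hilbert and on the compatibility of $\e_0^*$, the Riesz isomorphism and the pointwise scalar product; once ${\sf Dual}$ is known to be single-valued, the passage to the full limit is routine.
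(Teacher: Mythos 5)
Your proposal is correct and reaches the conclusion by a route that differs from the paper's in a meaningful technical way. The paper first records that infinitesimal Hilbertianity makes \(W^{1,2}(\X)\) \emph{and} \(L^2(T\X)\) separable, then invokes the duality theorem \cite[Theorem 1.6.7]{Gigli14} to identify \(\big(\e_0^*L^2(T^*\X)\big)^*\) with the pullback tangent module \(\e_0^*L^2(T\X)\); any \(\eta\) produced by Proposition \ref{prop:conv_sc_prod} then corresponds to some \(v\in\e_0^*L^2(T\X)\), and the \({\sf Dual}\) relations force \(|v-\e_0^*\nabla f|^2=|v|^2-2\,(\e_0^*\d_2 f)(v)+|\e_0^*\d_2 f|^2=0\), so \(v=\e_0^*\nabla f\) independently of the subsequence, and the limit is read off as \((\e_0^*\d_2 g)(\e_0^*\nabla f)=\langle\nabla g,\nabla f\rangle\circ\e_0\). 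You instead stay inside the dual module: you show that the pullback of a Hilbert module is Hilbert, deduce from the Remark on the Riesz isomorphism that \({\sf Dual}(\e_0^*\d_2 f)\) is a singleton, and compute the action of its unique element by polarisation. Your route avoids the module duality theorem and the separability of \(L^2(T\X)\) altogether, at the price of the (true and standard) lemma that pullbacks preserve Hilbertianity; the paper's route buys the geometric identification of the limit with \(\e_0^*\nabla f\), which is more in line with the interpretation of plans representing gradients. Both arguments conclude with the same subsequence criterion, which you spell out correctly.

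Two points need more care in your write-up. First, Proposition \ref{prop:conv_sc_prod} carries the standing hypothesis that \(W^{1,2}(\X)\) is separable, which is not among the assumptions of the corollary; you must, as the paper does, begin by noting that infinitesimal Hilbertianity implies this separability (see \cite[Proposition 4.3.5]{GP20}), otherwise your very first reduction is not justified. Second, your density argument for the parallelogram rule on \(\e_0^*L^2(T^*\X)\) is too quick: the rule transfers immediately only to pairs of pulled-back elements \((\e_0^*v,\e_0^*w)\), and the set of pairs satisfying a pointwise identity is closed under limits but not obviously under taking \(L^\infty(\ppi)\)-linear combinations. The standard repair is to pass through elements of the form \(\sum_i\1_{A_i}\cdot\e_0^*v_i\) with \((A_i)_i\) a Borel partition -- for such pairs the rule holds by locality after refining two partitions to a common one -- and to observe that these are dense, since bounded measurable functions are uniform limits of simple functions. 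With these two repairs your argument is complete and correct.
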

\begin{proof}
First, the infinitesimal Hilbertianity assumption grants that
\(W^{1,2}(\X)\) and \(L^2(T\X)\) are separable; see, \emph{e.g.},
\cite[Proposition 4.3.5]{GP20}. In particular, we know from
\cite[Theorem 1.6.7]{Gigli14} that the space \(\big(\e_0^*L^2(T^*\X)\big)^*\)
is isometrically isomorphic to \(\e_0^*L^2(T\X)\). Thanks to this fact,
we can identify any element \(\eta\) satisfying \eqref{eq:conv_sc_prod_claim}
(for some \(t_{i_j}\searrow 0\)) with an element \(v\) of the pullback module
\(\e_0^*L^2(T\X)\). Given that \((\e_0^*\d_2 f)(v)=|\e_0^*\d_2 f|^2=|v|^2\) holds
\(\ppi\)-a.e., we get
\[
|v-\e_0^*\nabla f|^2=|v|^2-2\,\langle v,\e_0^*\nabla f\rangle+|\e_0^*\nabla f|
^2=|v|^2-2\,(\e_0^*\d_2 f)(v)+|\e_0^*\d_2 f|^2=0\quad\ppi\text{-a.e.,}
\]
whence \(v=\e_0^*\nabla f\). In particular, the limit \(v\) does not depend
on \((t_{i_j})_j\), thus accordingly
\[
\frac{g\circ\e_t-g\circ\e_0}{t}\rightharpoonup
(\e_0^*\d_2 g)(\e_0^*\nabla f)=\langle\nabla g,\nabla f\rangle\circ\e_0
\quad\text{ weakly in }L^1(\ppi)\text{ as }t\searrow 0
\]
for every \(g\in W^{1,2}(\X)\). Therefore, the statement is achieved.
\end{proof}
\subsection{Existence of master test plans on metric measure spaces}
We now have at our disposal all the ingredients that we need to prove
our main theorem, which says that a single test plan is sufficient to
identify the minimal relaxed slope of every Sobolev function. In this
regard, the relevant notion is that of master test plan:
\begin{definition}[Master test plan]
Let \((\X,\sfd,\mm)\) be a metric measure space. Fix \(p,q\in(1,\infty)\)
such that \(\frac{1}{p}+\frac{1}{q}=1\). Then a \(q\)-test plan
\(\ppi_q\) on \((\X,\sfd,\mm)\) is said to be a \textbf{master \(q\)-test plan}
provided it holds that
\[
|Df|_{\sppi_q,p}=|Df|_p\quad\text{ for every }f\in W^{1,p}(\X).
\]
Here we are using the fact that \(W^{1,p}(\X)\subseteq W^{1,p}_{\sppi_q}(\X)\),
which is granted by Proposition \ref{prop:ineq_mwug}.
\end{definition}
Hence, our main result about identification of the minimal relaxed slope
reads as follows:
\begin{theorem}[Existence of master test plans]\label{thm:master_tp}
Let \((\X,\sfd,\mm)\) be a metric measure space. Fix any
\(p,q\in(1,\infty)\) such that \(\frac{1}{p}+\frac{1}{q}=1\).
Suppose the Sobolev space \(W^{1,p}(\X)\) is separable. Then there exists
a master \(q\)-test plan \(\ppi_q\) on \((\X,\sfd,\mm)\).
\end{theorem}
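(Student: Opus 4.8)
The plan is to build the master test plan by superposing countably many plans that represent the gradients of a dense sequence of Sobolev functions. First I would fix, via Remark~\ref{rmk:ex_tilde_m}, a measure \(\tilde\mm\in\mathscr P_q(\X)\) with \(\mm\ll\tilde\mm\leq C\mm\), and a sequence \((f_n)_n\) strongly dense in \(W^{1,p}(\X)\) (which exists by the separability hypothesis). For each \(n\), Theorem~\ref{thm:ex_plan_repr_grad} provides a \(q\)-test plan \(\ppi^n\) that \(q\)-represents the gradient of \(f_n\) and satisfies \((\e_0)_\#\ppi^n=\tilde\mm\). Setting \(\Pi\coloneqq\{\ppi^n\}_n\), Proposition~\ref{prop:ineq_mwug} already gives \(W^{1,p}(\X)\subseteq W^{1,p}_\Pi(\X)\) and \(|Df|_{\Pi,p}\leq|Df|_p\) \(\mm\)-a.e.; the crux is to prove the reverse inequality, i.e.\ that the countable family \(\Pi\) detects the full minimal weak upper gradient.

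For the core step, fix \(f\in W^{1,p}(\X)\) and let \(G\) be an arbitrary \((\Pi,p)\)-weak upper gradient of \(f\). Testing the weak upper gradient inequality along \(\ppi^n\) and using Theorem~\ref{thm:vel_tp} and Proposition~\ref{prop:f_et_AC} (so that \(\frac{\d}{\d r}f\circ\e_r=(\ppi^n)'_r(\e_r^*\d_p f)\) and \(|(\ppi^n)'_r|=|\dot\gamma_r|\)), together with Lemma~\ref{lem:bound_above_scal_pr} applied to \(G\), I would obtain
\[
\bigg|\frac{f\circ\e_t-f\circ\e_0}{t}\bigg|=\bigg|\fint_0^t(\ppi^n)'_r(\e_r^*\d_p f)\,\d r\bigg|\leq\fint_0^t G\circ\e_r\,|(\ppi^n)'_r|\,\d r\leq\Phi_t\quad\ppi^n\text{-a.e.,}
\]
where \(\Phi_t\to G\circ\e_0\,|Df_n|_p^{p/q}\circ\e_0\) strongly in \(L^1(\ppi^n)\). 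By Proposition~\ref{prop:conv_sc_prod} applied with \(g=f\), along a subsequence \(t_{i_j}\searrow 0\) the averages converge weakly in \(L^1(\ppi^n)\) to \(\eta(\e_0^*\d_p f)\) for some \(\eta\in{\sf Dual}(\e_0^*\d_p f_n)\). Since the averages are dominated by the strongly convergent family \(\Phi_{t_{i_j}}\), the weak limit inherits the pointwise bound \(|\eta(\e_0^*\d_p f)|\leq G\circ\e_0\,|Df_n|_p^{p/q}\circ\e_0\) \(\ppi^n\)-a.e.

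Now I would use that \(\eta\in{\sf Dual}(\e_0^*\d_p f_n)\), which yields \(\eta(\e_0^*\d_p f_n)=|Df_n|_p^p\circ\e_0\) and \(|\eta|=|Df_n|_p^{p/q}\circ\e_0\). Splitting \(\eta(\e_0^*\d_p f_n)=\eta(\e_0^*\d_p f)+\eta(\e_0^*\d_p(f_n-f))\) and estimating the two terms leads, \(\ppi^n\)-a.e., to
\[
|Df_n|_p^p\circ\e_0\leq G\circ\e_0\,|Df_n|_p^{p/q}\circ\e_0+|Df_n|_p^{p/q}\circ\e_0\,|D(f_n-f)|_p\circ\e_0.
\]
Because \((\e_0)_\#\ppi^n=\tilde\mm\) and \(\mm\ll\tilde\mm\), pushing forward through \(\e_0\) turns this into an \(\mm\)-a.e.\ inequality; dividing by \(|Df_n|_p^{p/q}\) where it is positive and using the arithmetic identity \(p-\tfrac{p}{q}=1\) gives \(|Df_n|_p\leq G+|D(f_n-f)|_p\) \(\mm\)-a.e. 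Letting \(n\to\infty\) along a subsequence realising a.e.\ convergence, and using that \(f\mapsto|Df|_p\) is continuous from \(W^{1,p}(\X)\) to \(L^p(\mm)\) together with \(f_n\to f\), I would conclude \(|Df|_p\leq G\) \(\mm\)-a.e. Taking \(G=|Df|_{\Pi,p}\), this proves \(|Df|_{\Pi,p}=|Df|_p\) for every \(f\in W^{1,p}(\X)\).

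Finally I would collapse \(\Pi\) into a single plan. Choosing weights \(c_n>0\) with \(\sum_n c_n=1\) and \(\sum_n c_n\big({\rm Comp}(\ppi^n)+{\rm KE}_q(\ppi^n)\big)<+\infty\), the measure \(\ppi_q\coloneqq\sum_n c_n\,\ppi^n\) is a Borel probability measure whose compression constant and kinetic \(q\)-energy are controlled by the two summable series, hence a \(q\)-test plan. Since a set is \(\ppi_q\)-negligible precisely when it is \(\ppi^n\)-negligible for every \(n\), the weak upper gradient inequality holds \((\ppi_q\otimes\mathcal L^1)\)-a.e.\ exactly when it holds along every \(\ppi^n\); thus \({\rm G}_{\{\ppi_q\},p}(f)={\rm G}_{\Pi,p}(f)\) and therefore \(|Df|_{\sppi_q,p}=|Df|_{\Pi,p}=|Df|_p\), so \(\ppi_q\) is a master \(q\)-test plan. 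The main obstacle is the core step of the second and third paragraphs: transferring the weak upper gradient inequality, which controls the derivative at a.e.\ time, into information about the derivative at \(t=0\), where the gradient-representation structure lives. This rests on pairing the weak \(L^1\)-convergence of Proposition~\ref{prop:conv_sc_prod} with the strong domination of Lemma~\ref{lem:bound_above_scal_pr} and on the density approximation \(f_n\to f\), with the identity \(p-\tfrac{p}{q}=1\) forcing the final collapse to \(|Df|_p\leq G\).
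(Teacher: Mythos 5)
Your proposal is correct and shares the paper's overall architecture -- a countable dense family \((f_n)_n\), plans \(\ppi^n\) that \(q\)-represent the gradients of the \(f_n\) with common initial marginal \(\tilde\mm\) (Remark \ref{rmk:ex_tilde_m} plus Theorem \ref{thm:ex_plan_repr_grad}), reduction to showing that \(\Pi=\{\ppi^n\}_n\) recovers \(|Df|_p\), and a final superposition with summable weights (your last paragraph is equivalent to the paper's \textsc{Step 3}) -- but the core step is executed along a genuinely different route. The paper works at the level of integrals: choosing \(g=f\) in Proposition \ref{prop:conv_sc_prod}, it bounds \(\int\eta_n(\e_0^*\d_p f)\,\d\ppi^n\) above by \(\big\||Df|_{\Pi,p}\big\|_{L^p(\tilde\mm)}\big\||Df_n|_p\big\|_{L^p(\tilde\mm)}^{p/q}\) via the integrated curvewise estimate \(|f(\gamma_t)-f(\gamma_0)|\leq\int_0^t|\frac{\d}{\d s}f(\gamma_s)|\,\d s\) and H\"{o}lder, identifies \(\lim_n\int\eta_n(\e_0^*\d_p f)\,\d\ppi^n=\int|Df|_p^p\,\d\tilde\mm\), and then concludes from the resulting norm inequality together with the pointwise bound of Proposition \ref{prop:ineq_mwug}. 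You instead work pointwise: you dominate the difference quotients of \(f\) by the family \(\Phi_t\) of Lemma \ref{lem:bound_above_scal_pr} applied to the \emph{arbitrary} weak upper gradient \(G\), use that weak \(L^1\)-limits preserve a.e.\ domination by strongly convergent majorants (the same mechanism as in Corollary \ref{cor:aux_countable_tp}) to get \(|\eta(\e_0^*\d_p f)|\leq G\circ\e_0\,|Df_n|_p^{p/q}\circ\e_0\) \(\ppi^n\)-a.e., and then exploit the duality relations of \(\eta\in{\sf Dual}(\e_0^*\d_p f_n)\) to land on the \(\mm\)-a.e.\ inequality \(|Df_n|_p\leq G+|D(f_n-f)|_p\), whence \(|Df|_p\leq G\). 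This buys a directly pointwise conclusion -- every \((\Pi,p)\)-weak upper gradient dominates \(|Df|_p\) \(\mm\)-a.e.\ -- without the paper's divide-by-norms bootstrap, and it also dispenses with the domination hypothesis \(|Df_n|_p\leq G'\) that the paper needs for its dominated-convergence step.

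One step in your argument needs an explicit justification that the paper's formulation is designed to avoid. The inequality
\[
\Big|\fint_0^t(\ppi^n)'_r(\e_r^*\d_p f)\,\d r\Big|\leq\fint_0^t G\circ\e_r\,|(\ppi^n)'_r|\,\d r\quad\ppi^n\text{-a.e.}
\]
is not a consequence of the module-theoretic pointwise bound, which would only give \(|Df|_p\circ\e_r\) (useless here, since \(G\) may a priori be smaller than \(|Df|_p\)). To insert \(G\) you must know that \((\ppi^n)'_r(\e_r^*\d_p f)(\gamma)\) coincides, \((\ppi^n\otimes\mathcal L^1)\)-a.e., with the derivative of the \(W^{1,1}(0,1)\) function \(t\mapsto f(\gamma_t)\) appearing in the definition of \((\Pi,p)\)-weak upper gradient; Theorem \ref{thm:vel_tp} only identifies \((\ppi^n)'_r(\e_r^*\d_p f)\) with the \emph{strong \(L^1(\ppi^n)\)-derivative} of \(r\mapsto f\circ\e_r\), which is a different object. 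The identification is true and can be proven by a short Fubini argument: by Proposition \ref{prop:f_et_AC} and the fact that Bochner integrals can be evaluated pointwise a.e., both quantities have the same integral over \([s,t]\) for a.e.\ \(s<t\) and \(\ppi^n\)-a.e.\ \(\gamma\), so they agree a.e.\ by differentiation. The paper sidesteps this by never equating the two derivatives, using only the curvewise integrated bound inside an integral over \(\ppi^n\). With this (standard) lemma supplied, your proof is complete.
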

\begin{proof}
We subdivide the proof into several steps:\\
{\color{blue}\textsc{Step 1.}}
First of all, fix a countable family \(\mathcal C\subseteq W^{1,p}(\X)\)
that is strongly dense in \(W^{1,p}(\X)\). Fix any measure
\(\tilde\mm\in\mathscr P_q(\X)\) such that \(\mm\ll\tilde\mm\leq C\mm\)
for some \(C>0\), whose existence is shown in Remark
\ref{rmk:ex_tilde_m}. Given any \(f\in\mathcal C\),
there exists a \(q\)-test plan \(\ppi^f\) on \(\X\) that \(q\)-represents
the gradient of \(f\) and satisfies \((\e_0)_\#\ppi^f=\tilde\mm\)
(by Theorem \ref{thm:ex_plan_repr_grad}). Let us define
\(\Pi\coloneqq\{\ppi^f\,:\,f\in\mathcal C\}\). We aim to prove that
\begin{equation}\label{eq:countable_tp_claim2}
|Df|_{\Pi,p}=|Df|_p\quad\text{ for every }f\in W^{1,p}(\X).
\end{equation}
Since \(|Df|_{\Pi,p}\leq|Df|_p\) holds \(\mm\)-a.e.\ by Proposition
\ref{prop:ineq_mwug}, to prove \eqref{eq:countable_tp_claim2}
it suffices to show that
\begin{equation}\label{eq:countable_tp_aux}
\int|Df|_p^p\,\d\tilde\mm\leq\int|Df|_{\Pi,p}^p\,\d\tilde\mm
\quad\text{ for every }f\in W^{1,p}(\X).
\end{equation}
{\color{blue}\textsc{Step 2.}}
In order to show \eqref{eq:countable_tp_aux}, let \(f\in W^{1,p}(\X)\)
be fixed. Choose any sequence \((f_n)_n\subseteq\mathcal C\)
that strongly converges to \(f\) in \(W^{1,p}(\X)\). Possibly passing
to a (not relabelled) subsequence, we may assume that \(|Df_n|_p\to|Df|_p\)
pointwise \(\mm\)-a.e.\ and that there exists a function \(G\in L^p(\mm)\)
such that \(|Df_n|_p\leq G\) holds \(\mm\)-a.e.\ for every \(n\in\N\).
For brevity, let us put \(\ppi^n\coloneqq\ppi^{f_n}\)
for every \(n\in\N\). Given any \(n\in\N\), by applying Proposition
\ref{prop:conv_sc_prod} we obtain that there exist an element
\(\eta_n\in{\sf Dual}(\e_0^*\d_p f_n)\) and a sequence
\((t^n_i)_i\subseteq(0,1)\) with \(\lim_{i\to\infty} t^n_i=0\) such that
\begin{equation}\label{eq:countable_tp_aux2}
\frac{g\circ\e_{t^n_i}-g\circ\e_0}{t^n_i}\rightharpoonup
\eta_n(\e_0^*\d_p g)\quad\text{ weakly in }L^1(\ppi^n)\text{ as }
i\to\infty,\text{ for every }g\in W^{1,p}(\X).
\end{equation}
Therefore, by choosing \(g\coloneqq f\) in \eqref{eq:countable_tp_aux2}
we deduce that
\begin{align*}
\int\eta_n(\e_0^*\d_p f)\,\d\ppi^n&=
\lim_{i\to\infty}\int\frac{f\circ\e_{t^n_i}-f\circ\e_0}{t^n_i}\,\d\ppi^n
\leq\limi_{i\to\infty}\frac{1}{t^n_i}\int\big|f(\gamma_{t^n_i})-f(\gamma_0)
\big|\,\d\ppi^n(\gamma)\\
&\leq\limi_{i\to\infty}\int\!\!\!\fint_0^{t^n_i}\bigg|\frac{\d}{\d s}\,
f(\gamma_s)\bigg|\,\d s\,\d\ppi^n(\gamma)\leq
\limi_{i\to\infty}\int\!\!\!\fint_0^{t^n_i}|Df|_{\Pi,p}(\gamma_s)\,|\dot\gamma_s|
\,\d s\,\d\ppi^n(\gamma)\\
&\leq\limi_{i\to\infty}\bigg(\int\!\!\!\fint_0^{t^n_i}|Df|_{\Pi,p}^p\circ\e_s
\,\d s\,\d\ppi^n\bigg)^{\frac{1}{p}}\bigg(\int\!\!\!\fint_0^{t^n_i}
|\dot\gamma_s|^q\,\d s\,\d\ppi^n(\gamma)\bigg)^{\frac{1}{q}}\\
&=\lim_{i\to\infty}\bigg(\fint_0^{t^n_i}\big\||Df|_{\Pi,p}\circ\e_s\big\|
_{L^p(\sppi^n)}^p\,\d s\bigg)^{\frac{1}{p}}\bigg(\int
\frac{{\rm E}_{q,t^n_i}^q}{(t^n_i)^q}\,\d\ppi^n\bigg)^{\frac{1}{q}}\\
&=\bigg(\int|Df|_{\Pi,p}^p\circ\e_0\,\d\ppi^n\bigg)^{\frac{1}{p}}
\bigg(\int|Df_n|_p^p\circ\e_0\,\d\ppi^n\bigg)^{\frac{1}{q}}\\
&=\big\||Df|_{\Pi,p}\big\|_{L^p(\tilde\mm)}\,\big\||Df_n|_p\big\|
_{L^p(\tilde\mm)}^{p/q}.
\end{align*}
Furthermore, observe that for any \(n\in\N\) it holds that
\begin{align*}
&\bigg|\int|Df|_p^p\,\d\tilde\mm-\int\eta_n(\e_0^*\d_p f)\,\d\ppi^n\bigg|\\
\leq\,&\bigg|\int|Df|_p^p\,\d\tilde\mm-\int\eta_n(\e_0^*\d_p f_n)\,\d\ppi^n\bigg|
+\bigg|\int\eta_n\big(\e_0^*\d_p(f_n-f)\big)\,\d\ppi^n\bigg|\\
\leq\,&\bigg|\int|Df|_p^p\,\d\tilde\mm-\int|\e_0^*\d_p f_n|^p\,\d\ppi^n\bigg|
+\int|\eta_n|\big|D(f_n-f)\big|_p\circ\e_0\,\d\ppi^n\\
\leq\,&\bigg|\int|Df|_p^p\,\d\tilde\mm-\int|Df_n|_p^p\,\d\tilde\mm\bigg|
+\bigg(\int|\eta_n|^q\,\d\ppi^n\bigg)^{\frac{1}{q}}
\bigg(\int\big|D(f_n-f)\big|_p^p\,\d\tilde\mm\bigg)^{\frac{1}{p}}\\
\leq\,&\bigg|\int|Df|_p^p\,\d\tilde\mm-\int|Df_n|_p^p\,\d\tilde\mm\bigg|
+C^{\frac{1}{p}}\bigg(\int|Df_n|_p^p\,\d\tilde\mm\bigg)^{\frac{1}{q}}
\|f_n-f\|_{W^{1,p}(\X)}.
\end{align*}
Since \(|Df_n|_p^p\to|Df|_p^p\) pointwise \(\tilde\mm\)-a.e.\ and
\(|Df_n|_p^p\leq G^p\in L^1(\tilde\mm)\) holds \(\tilde\mm\)-a.e.\ for
all \(n\in\N\), by using the dominated convergence theorem we obtain
that \(\int|D f_n|_p^p\,\d\tilde\mm\to\int|Df|_p^p\,\d\tilde\mm\).
Consequently, by letting \(n\to\infty\) in the above estimates we get
\(\int\eta_n(\e_0^*\d_p f)\,\d\ppi^n\to\int|Df|_p^p\,\d\tilde\mm\)
as \(n\to\infty\). All in all, we can conclude that
\begin{align*}
\int|Df|_p^p\,\d\tilde\mm&=\lim_{n\to\infty}\int\eta_n(\e_0^*\d_p f)\,\d\ppi^n
\leq\big\||Df|_{\Pi,p}\big\|_{L^p(\tilde\mm)}\lim_{n\to\infty}
\big\||Df_n|_p\big\|_{L^p(\tilde\mm)}^{p/q}\\
&\leq\big\||Df|_{\Pi,p}\big\|_{L^p(\tilde\mm)}\,
\big\||Df|_p\big\|_{L^p(\tilde\mm)}^{p/q}.
\end{align*}
This proves the validity of \eqref{eq:countable_tp_aux}
and accordingly of \eqref{eq:countable_tp_claim2}.\\
{\color{blue}\textsc{Step 3.}}
Finally, it remains to show how to get the claim
from \eqref{eq:countable_tp_claim2}. Call \(\Pi=(\ppi^k)_k\) and set
\[
\eeta\coloneqq\sum_{k=1}^\infty\frac{\ppi^k}{2^k\max\big\{
{\rm Comp}(\ppi^k),{\rm KE}_q(\ppi^k),1\big\}},\qquad
\ppi_q\coloneqq\frac{\eeta}{\eeta\big(C\big([0,1],\X\big)\big)}.
\]
Since all measures \(\ppi^k\) are Borel measures concentrated
on \(AC^q\big([0,1],\X\big)\), we have that \(\eeta\) is a Borel
measure concentrated on \(AC^q\big([0,1],\X\big)\) as well. Also,
\(\eeta\big(C\big([0,1],\X\big)\big)\leq\sum_{k=1}^\infty 1/2^k=1\),
so that \(\ppi_q\) is well-defined and thus a Borel probability
measure concentrated on \(AC^q\big([0,1],\X\big)\). Given any
\(t\in[0,1]\) and a Borel set \(E\subseteq\X\), we have that
\[
(\e_t)_\#\eeta(E)=\eeta\big(\e_t^{-1}(E)\big)\leq\sum_{k=1}^\infty
\frac{\ppi^k\big(\e_t^{-1}(E)\big)}{2^k\,{\rm Comp}(\ppi^k)}
\leq\mm(E)\sum_{k=1}^\infty\frac{1}{2^k}=\mm(E),
\]
whence \(\ppi_q\) satisfies the item i) of Definition \ref{def:test_plan}.
Moreover, observe that
\[
\int\!\!\!\int_0^1|\dot\gamma_t|^q\,\d t\,\d\eeta(\gamma)
\leq\sum_{\substack{k\in\N:\\{\rm KE}_q(\sppi^k)>0}}\frac{1}
{2^k\,{\rm KE}_q(\ppi^k)}\int\!\!\!\int_0^1|\dot\gamma_t|^q\,\d t
\,\d\ppi^k(\gamma)\leq\sum_{k=1}^\infty\frac{1}{2^k}=1,
\]
thus accordingly \(\ppi_q\) has finite kinetic \(q\)-energy.
All in all, \(\ppi_q\) is a \(q\)-test plan on \((\X,\sfd,\mm)\).

Finally, a given Borel subset of \(C\big([0,1],\X\big)\)
is \(\ppi_q\)-negligible if and only if it is \(\ppi^k\)-negligible for
all \(k\in\N\), thus \(W^{1,p}_{\sppi_q}(\X)=W^{1,p}_\Pi(\X)\)
and \(|Df|_{\sppi_q,p}=|Df|_{\Pi,p}\) holds for every
\(f\in W^{1,p}_{\sppi_q}(\X)\).
Consequently, the statement follows from \eqref{eq:countable_tp_claim2}.
\end{proof}
\begin{problem}\label{pb:open_pb}
Under the assumption of Theorem \ref{thm:master_tp}, does it hold
that \(W^{1,p}_{\sppi_q}(\X)=W^{1,p}(\X)\)? In other words, is the
\(q\)-test plan \(\ppi_q\) sufficient to detect which functions
are Sobolev, and not only to identify the minimal \(p\)-relaxed slope
of those functions that are known to be Sobolev?
\end{problem}

A positive answer to the above question is known, for instance,
in the Euclidean space (and, similarly, on Riemannian manifolds).
Indeed, in this case the original approach to weakly differentiable
functions pioneered by B.\ Levi \cite{Levi06} shows that to look
at the behaviour along coordinate directions is sufficient to
distinguish the Sobolev functions; by building upon this result,
one can find a master \(q\)-test plan on \(\R^n\) for
which \(W^{1,p}_{\sppi_q}(\R^n)=W^{1,p}(\R^n)\).
\section{Master test plans on \texorpdfstring{\(\sf RCD\)}{RCD} spaces}
Aim of this section is to improve Theorem \ref{thm:master_tp} (when \(p=2\))
in the case in which the space \((\X,\sfd,\mm)\) under consideration
is a \textbf{\({\sf RCD}(K,\infty)\) space} for some \(K\in\R\).
A \({\sf RCD}(K,\infty)\) space is an infinitesimally
Hilbertian space  whose Ricci curvature is bounded from below by \(K\),
in a synthetic sense. For an account on this theory, we refer to
\cite{AmbrosioICM} and the references therein.
\medskip

An important feature of \({\sf RCD}(K,\infty)\) spaces is the presence
of a vast class of `highly regular' functions, which are referred to
as the \textbf{test functions}. In order to introduce them, we first
need to recall the notion of \textbf{Laplacian}: we declare that
\(f\in W^{1,2}(\X)\) belongs to \(D(\Delta)\) provided there
exists a (uniquely determined) function \(\Delta f\in L^2(\mm)\) such that
\[
\int g\,\Delta f\,\d\mm=-\int\langle\nabla g,\nabla f\rangle\,\d\mm
\quad\text{ for every }g\in W^{1,2}(\X).
\]
With this said, we are in a position to define
\[
{\rm Test}^\infty(\X)\coloneqq\Big\{f\in D(\Delta)\cap L^\infty(\mm)\;\Big|
\;|Df|_2\in L^\infty(\mm),\,\Delta f\in W^{1,2}(\X)\cap L^\infty(\mm)\Big\}.
\]
As proven in \cite{Savare13,Gigli14}, the family \({\rm Test}^\infty(\X)\)
is strongly dense in the Sobolev space \(W^{1,2}(\X)\).
\subsection{Regular Lagrangian flow}
Another important ingredient that we will need to prove Theorem
\ref{thm:master_tp_RCD} is the notion of regular Lagrangian
flow, which (in the metric setting) has been introduced by
L.\ Ambrosio and D.\ Trevisan in \cite{Ambrosio-Trevisan14}.
The following result is only a very special case of a much more
general statement, but still it is sufficient for our purposes;
the formulation is taken from \cite{Gigli17}.
\begin{theorem}[Regular Lagrangian flow \cite{Ambrosio-Trevisan14}]
\label{thm:RLF}
Let \((\X,\sfd,\mm)\) be a \({\sf RCD}(K,\infty)\) space, for some
constant \(K\in\R\). Let \(f\in{\rm Test}^\infty(\X)\) be given.
Then there exists a (\(\mm\)-a.e.\ uniquely determined)
\textbf{regular Lagrangian flow}
\(F_\cdot\colon\X\to C\big([0,1],\X\big)\) associated with \(\nabla f\),
which means that:
\begin{itemize}
\item[\(\rm i)\)] The map \(F_\cdot\colon\X\to C\big([0,1],\X\big)\)
is Borel and satisfies \(F_0(x)=x\) for \(\mm\)-a.e.\ \(x\in\X\).
\item[\(\rm ii)\)] There exists a constant \(L>0\) such that
\((F_t)_\#\mm\leq L\mm\) for every \(t\in[0,1]\).
\item[\(\rm iii)\)] Given any function \(g\in W^{1,2}(\X)\),
it holds that \([0,1]\ni t\mapsto g\big(F_t(x)\big)\) belongs
to the space \(W^{1,1}(0,1)\) for \(\mm\)-a.e.\ \(x\in\X\) and satisfies
\[
\frac{\d}{\d t}\,g\big(F_t(x)\big)
=\langle\nabla g,\nabla f\rangle\big(F_t(x)\big)\quad
\text{ for }(\mm\otimes\mathcal L^1)\text{-a.e.\ }(x,t)\in\X\times[0,1].
\]
\end{itemize}
\end{theorem}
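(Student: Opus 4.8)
The statement is quoted verbatim from \cite{Ambrosio-Trevisan14} (in the formulation of \cite{Gigli17}), so a self-contained argument is not expected here; nevertheless, the natural route to it is the Eulerian--Lagrangian correspondence for the continuity equation. The plan is to first solve, on the Eulerian side, the continuity equation driven by the velocity field \(b=\nabla f\), then to lift its solutions to a measure on curves by a superposition principle, and finally to promote this measure to a genuine flow map \(F_\cdot\) by proving uniqueness.

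First I would record that the hypothesis \(f\in{\rm Test}^\infty(\X)\) places \(b=\nabla f\) in exactly the regularity class for which the theory applies: the bound \(|Df|_2\in L^\infty(\mm)\) controls the speed of the curves, the identity \(\div(\nabla f)=\Delta f\in L^\infty(\mm)\) controls the compression, and the requirement \(\Delta f\in W^{1,2}(\X)\), combined with the second-order calculus available on \(\sf RCD\) spaces (the measure-valued Hessian of \cite{Gigli14}), furnishes the Sobolev-type regularity of \(b\) needed for the uniqueness step. With these bounds in hand, I would solve the continuity equation \(\frac{\d}{\d t}\int g\,\d\mu_t=\int\langle\nabla g,\nabla f\rangle\,\d\mu_t\) with initial datum \(\mu_0=\mm\) (localizing if \(\mm\) is infinite), obtaining a curve \((\mu_t)_{t\in[0,1]}\) of measures; a Gr\"onwall estimate against \(\|\Delta f\|_{L^\infty(\mm)}\) then yields \(\mu_t\leq L\,\mm\) with \(L=\e^{\|\Delta f\|_{L^\infty(\mm)}}\), which is property (ii).

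Next I would invoke the superposition principle for the continuity equation on metric measure spaces: any such solution \((\mu_t)_t\) is represented by a Borel measure \(\eeta\) on \(C([0,1],\X)\) with \((\e_t)_\#\eeta=\mu_t\) and such that \(\eeta\)-a.e.\ curve solves the ODE \(\frac{\d}{\d t}g(\gamma_t)=\langle\nabla g,\nabla f\rangle(\gamma_t)\) for every \(g\in W^{1,2}(\X)\). This is precisely the Lagrangian content of item (iii), and disintegrating \(\eeta\) over the initial point \(\gamma_0\) produces a candidate for the assignment \(x\mapsto F_\cdot(x)\) with \(F_0(x)=x\), as in item (i).

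The main obstacle --- and the genuinely \(\sf RCD\)-specific step --- is uniqueness, which is what upgrades the possibly non-deterministic superposition to a well-defined map \(F_\cdot\) and grants its \(\mm\)-a.e.\ uniqueness. Uniqueness at the Lagrangian level follows from uniqueness of bounded solutions of the continuity equation, established by a renormalization/commutator argument in the spirit of DiPerna--Lions--Ambrosio. In the absence of convolution on \(\X\), I would mollify using the heat semigroup \(\e^{\eps\Delta}\) in place of a convolution kernel and estimate the commutator between \(\e^{\eps\Delta}\) and the transport operator \(b\cdot\nabla\); showing that this commutator tends to \(0\) strongly as \(\eps\searrow 0\) is the technical crux, and it is exactly here that the curvature lower bound (through the Bakry--\'Emery contraction estimates) and the Sobolev regularity of \(\nabla f\) become indispensable. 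Once the commutator vanishes, bounded solutions are renormalized, hence unique, and the existence of the flow map \(F_\cdot\) together with all three asserted properties follows.
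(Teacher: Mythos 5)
The paper offers no proof of this theorem: it is imported as a black box from \cite{Ambrosio-Trevisan14}, in the formulation of \cite{Gigli17}, so there is no internal argument to compare your proposal against. Your outline --- Eulerian well-posedness of the continuity equation with a Gr\"onwall compression bound, the superposition principle to pass to measures on curves, and uniqueness via renormalization with heat-semigroup mollification and commutator estimates exploiting the curvature bound --- is a faithful sketch of the cited reference's strategy, hence fully consistent with what the paper relies on.
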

\begin{remark}{\rm
Observe that item ii) is meaningful since the map
\([0,1]\times\X\ni(t,x)\mapsto F_t(x)\in\X\) is Borel
(as it is a Carath\'{e}odory function), thus in particular
\(\X\ni x\mapsto F_t(x)\in\X\) is Borel for every \(t\in[0,1]\).
Moreover, item iii) is well-posed thanks to item ii): given that
\(\langle\nabla g,\nabla f\rangle\) is defined \(\mm\)-a.e.\ and
\((F_t)_\#\mm\ll\mm\), we have that
\(\langle\nabla g,\nabla f\rangle\circ F_t\) is defined
\(\mm\)-a.e.\ as well.
\fr}\end{remark}

Given any measure \(\mu\in\mathscr P(\X)\) such that \(\mu\leq C\mm\)
for some constant \(C>0\), it holds that
\begin{equation}\label{eq:tp_associated_RLF}
\ppi\coloneqq(F_\cdot)_\#\mu\quad\text{ is a }\infty\text{-test plan on }\X.
\end{equation}
Also, we have that \(\ppi'_t=\e_t^*\nabla f\) holds for
\(\mathcal L^1\)-a.e.\ \(t\in[0,1]\). We refer to \cite{Gigli17}
for more details.
\subsection{Existence of master test plans on
\texorpdfstring{\(\sf RCD\)}{RCD} spaces}
To begin with, we show that the regularity result in 
Proposition \ref{prop:f_et_AC} can be sharpened when
the test plan is induced by a regular Lagrangian flow
(in the sense of \eqref{eq:tp_associated_RLF} above):
\begin{lemma}\label{lem:f_et_C1}
Let \((\X,\sfd,\mm)\) be a \({\sf RCD}(K,\infty)\) space, for some
\(K\in\R\). Let \(f\in{\rm Test}^\infty(\X)\) be given.
Denote by \(F_\cdot\) the regular Lagrangian flow associated with
\(\nabla f\). Let \(\mu\in\mathscr P(\X)\) be such that \(\mu\leq C\mm\)
for some \(C>0\) and define \(\ppi\coloneqq(F_\cdot)_\#\mu\). Then
for any \(g\in W^{1,2}(\X)\) it holds that the map
\([0,1]\ni t\mapsto g\circ\e_t\in L^1(\ppi)\) is of class \(C^1\) and
\[
\frac{\d}{\d t}\,g\circ\e_t=\langle\nabla g,\nabla f\rangle\circ\e_t
\quad\text{ for every }t\in[0,1].
\]
\end{lemma}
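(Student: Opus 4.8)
The plan is to combine the general absolute-continuity statement of Proposition \ref{prop:f_et_AC} with the special structure of the velocity of a plan induced by a regular Lagrangian flow, and then to upgrade the resulting a.e.\ differentiability to genuine $C^1$ regularity by producing a continuous representative of the derivative. First I would record the preliminary facts. On a $\mathsf{RCD}(K,\infty)$ space the Sobolev space $W^{1,2}(\X)$ is separable, so that Theorem \ref{thm:vel_tp} and Proposition \ref{prop:f_et_AC} are available; by \eqref{eq:tp_associated_RLF} the measure $\ppi=(F_\cdot)_\#\mu$ is an $\infty$-test plan, hence in particular a $2$-test plan; and the function $h\coloneqq\langle\nabla g,\nabla f\rangle$ belongs to $L^1(\mm)$ by the Cauchy--Schwarz bound $|h|\leq|\nabla g|\,|\nabla f|$. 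Since $f\in{\rm Test}^\infty(\X)$ one even has $|\nabla f|=|Df|_2\in L^\infty(\mm)$, but $h\in L^1(\mm)$ is all I shall need.

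The second step is to identify the a.e.\ derivative. Proposition \ref{prop:f_et_AC} (with $p=q=2$) yields that $t\mapsto g\circ\e_t$ lies in $AC^2\big([0,1],L^1(\ppi)\big)$ and that
\[
g\circ\e_t-g\circ\e_s=\int_s^t\ppi'_r(\e_r^*\d_2 g)\,\d r\qquad\text{for }s<t.
\]
The point peculiar to the present setting is that, by \eqref{eq:tp_associated_RLF}, the velocity of $\ppi$ is $\ppi'_r=\e_r^*\nabla f$ for $\mathcal L^1$-a.e.\ $r$. Using the compatibility of the pullback with the pointwise scalar product, which identifies $(\e_r^*\nabla f)(\e_r^*\d_2 g)$ with $\langle\nabla g,\nabla f\rangle\circ\e_r$, the integrand simplifies to $\ppi'_r(\e_r^*\d_2 g)=h\circ\e_r$, so that the representation becomes $g\circ\e_t-g\circ\e_s=\int_s^t h\circ\e_r\,\d r$.

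The decisive step, and the one I expect to carry the actual content of the lemma, is the continuity of $r\mapsto h\circ\e_r\in L^1(\ppi)$: for a general $2$-test plan the integrand $\ppi'_r(\e_r^*\d_2 g)$ need not depend continuously on $r$, whereas here it is the $\e_r$-pullback of the \emph{fixed} function $h\in L^1(\mm)$. Thus Proposition \ref{prop:cont_f_et}, applied with exponent equal to $1$, gives at once the strong continuity of $[0,1]\ni r\mapsto h\circ\e_r\in L^1(\ppi)$. Finally I would invoke the fundamental theorem of calculus for Bochner integrals: a map of the form $t\mapsto g\circ\e_0+\int_0^t h\circ\e_r\,\d r$ with continuous integrand is of class $C^1$, with derivative $h\circ\e_t=\langle\nabla g,\nabla f\rangle\circ\e_t$ at every $t\in[0,1]$ (including the one-sided derivatives at the endpoints). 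This is exactly the assertion, so no further estimate is required.
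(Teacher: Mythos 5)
Your proposal is correct and follows essentially the same route as the paper's proof: Proposition \ref{prop:f_et_AC} gives the absolutely continuous representation with a.e.\ derivative $\ppi'_t(\e_t^*\d_2 g)$, the identity $\ppi'_t=\e_t^*\nabla f$ from \eqref{eq:tp_associated_RLF} turns this into $\langle\nabla g,\nabla f\rangle\circ\e_t$, and Proposition \ref{prop:cont_f_et} (with exponent $1$) upgrades the a.e.\ statement to $C^1$ regularity. Your version merely spells out the intermediate justifications (separability of $W^{1,2}(\X)$, integrability of $\langle\nabla g,\nabla f\rangle$, compatibility of the pullback with the pairing) that the paper leaves implicit.
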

\begin{proof}
We know from Proposition \ref{prop:f_et_AC} that the curve
\([0,1]\ni t\mapsto g\circ\e_t\in L^1(\ppi)\) is absolutely
continuous and its \(L^1(\ppi)\)-strong derivative coincides
with \(D_t\coloneqq\ppi'_t(\e_t^*\d_2 g)=
\langle\nabla g,\nabla f\rangle\circ\e_t\)
for \(\mathcal L^1\)-a.e.\ \(t\in[0,1]\). Since the curve
\([0,1]\ni t\mapsto D_t\in L^1(\ppi)\) is continuous by
Proposition \ref{prop:cont_f_et}, the statement follows.
\end{proof}

We are now in a position to prove our existence result.
Even though the ideas are very similar to those carried out
in the proof of Theorem \ref{thm:master_tp}, we still prefer to
write down the whole argument since it presents many
technical simplifications.
\begin{theorem}[Master test plans on \(\sf RCD\) spaces]
\label{thm:master_tp_RCD}
Let \((\X,\sfd,\mm)\) be a \({\sf RCD}(K,\infty)\) space, for some
\(K\in\R\). Then there exists a \(\infty\)-test plan \(\ppi_2\) on
\((\X,\sfd,\mm)\) that is a master \(2\)-test plan.
\end{theorem}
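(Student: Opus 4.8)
The plan is to mirror the proof of Theorem \ref{thm:master_tp}, but to replace the plans representing a gradient with the test plans induced by regular Lagrangian flows, and to exploit a normalisation trick that forces all the selected curves to be uniformly Lipschitz. First I would fix a countable family \(\mathcal C\subseteq{\rm Test}^\infty(\X)\) that is strongly dense in \(W^{1,2}(\X)\) (this is possible since \({\rm Test}^\infty(\X)\) is dense in the separable space \(W^{1,2}(\X)\)), together with a measure \(\mu\in\mathscr P(\X)\) satisfying \(\mm\ll\mu\leq C\mm\), as in Remark \ref{rmk:ex_tilde_m}. For each \(g\in\mathcal C\) I set \(\lambda_g\coloneqq\big(1+\big\||Dg|_2\big\|_{L^\infty(\sppi)}\big)^{-1}\); here I mean the \(L^\infty(\mm)\)-norm, which is finite because \(g\in{\rm Test}^\infty(\X)\). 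Since \(\lambda_g\,g\in{\rm Test}^\infty(\X)\), Theorem \ref{thm:RLF} provides the regular Lagrangian flow \(F^g_\cdot\) associated with \(\nabla(\lambda_g g)\), and by \eqref{eq:tp_associated_RLF} the measure \(\ppi^g\coloneqq(F^g_\cdot)_\#\mu\) is a \(\infty\)-test plan with \(\ppi'_t=\e_t^*\nabla(\lambda_g g)\). The key gain from the normalisation is that the metric speed of \(\ppi^g\)-a.e.\ curve equals \(\lambda_g\,|Dg|_2\circ\e_t\leq 1\), so that \(\ppi^g\) is concentrated on the closed (hence Borel) family of \(1\)-Lipschitz curves, with a bound independent of \(g\). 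I also record that \((\e_0)_\#\ppi^g=\mu\), because \(F^g_0=\mathrm{id}\) holds \(\mm\)-a.e.\ and \(\mu\ll\mm\).

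The central step is to prove that \(\Pi\coloneqq\{\ppi^g:g\in\mathcal C\}\) already recovers every minimal weak upper gradient, namely \(|Dh|_{\Pi,2}=|Dh|_2\) \(\mm\)-a.e.\ for all \(h\in W^{1,2}(\X)\). The inequality \(|Dh|_{\Pi,2}\leq|Dh|_2\) is Proposition \ref{prop:ineq_mwug}, so only the converse is at stake, and it suffices to establish \(\big\||Dh|_2\big\|_{L^2(\mu)}\leq\big\||Dh|_{\Pi,2}\big\|_{L^2(\mu)}\). Fixing \(h\), I would pick \(g_n\in\mathcal C\) with \(g_n\to h\) strongly in \(W^{1,2}(\X)\) and test the flow \(\ppi^n\coloneqq\ppi^{g_n}\) against \(h\), writing \(\lambda_n\coloneqq\lambda_{g_n}\). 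By Lemma \ref{lem:f_et_C1} the map \(t\mapsto h\circ\e_t\in L^1(\ppi^n)\) is of class \(C^1\) with derivative \(\langle\nabla h,\nabla(\lambda_n g_n)\rangle\circ\e_t\), so evaluating at \(t=0\) and using \((\e_0)_\#\ppi^n=\mu\) gives
\[
\lambda_n\int\langle\nabla h,\nabla g_n\rangle\,\d\mu
=\lim_{t\searrow 0}\frac1t\int\big(h\circ\e_t-h\circ\e_0\big)\,\d\ppi^n.
\]
Bounding the increment by \(\int_0^t\big|\tfrac{\d}{\d s}h(\gamma_s)\big|\,\d s\leq\int_0^t|Dh|_{\Pi,2}(\gamma_s)\,|\dot\gamma_s|\,\d s\) (the weak upper gradient inequality for \(\ppi^n\in\Pi\)), applying the Cauchy--Schwarz inequality in \((s,\gamma)\), and letting \(t\searrow 0\) via the continuity in Proposition \ref{prop:cont_f_et} (recall \(|\dot\gamma_s|=\lambda_n|Dg_n|_2\circ\e_s\)), the right-hand side is at most \(\big\||Dh|_{\Pi,2}\big\|_{L^2(\mu)}\,\lambda_n\big\||Dg_n|_2\big\|_{L^2(\mu)}\). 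The factor \(\lambda_n\) cancels, and letting \(n\to\infty\) — using \(\nabla g_n\to\nabla h\) in \(L^2(T\X)\) on the left and \(|Dg_n|_2\to|Dh|_2\) in \(L^2(\mu)\) on the right — yields \(\big\||Dh|_2\big\|_{L^2(\mu)}^2\leq\big\||Dh|_{\Pi,2}\big\|_{L^2(\mu)}\big\||Dh|_2\big\|_{L^2(\mu)}\). Combined with the pointwise bound \(|Dh|_{\Pi,2}\leq|Dh|_2\) this forces \(\mu\)-a.e.\ (hence \(\mm\)-a.e., since \(\mm\ll\mu\)) equality.

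Finally, I would glue the countable family \(\Pi=(\ppi^k)_k\) into a single plan exactly as in Step 3 of the proof of Theorem \ref{thm:master_tp}, setting
\[
\eeta\coloneqq\sum_{k=1}^\infty\frac{\ppi^k}{2^k\max\big\{\Comp(\ppi^k),1\big\}},
\qquad\ppi_2\coloneqq\frac{\eeta}{\eeta\big(C\big([0,1],\X\big)\big)}.
\]
The weighting guarantees \((\e_t)_\#\ppi_2\leq\mm\), while the finiteness of the kinetic energy is automatic here because every \(\ppi^k\) is concentrated on \(1\)-Lipschitz curves; the same concentration is inherited by \(\ppi_2\), so \(\ppi_2\) is indeed a \(\infty\)-test plan. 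Since \(\ppi_2\) and all the \(\ppi^k\) share the same negligible Borel sets, one has \(|Dh|_{\sppi_2,2}=|Dh|_{\Pi,2}=|Dh|_2\) for every \(h\in W^{1,2}(\X)\), so \(\ppi_2\) is a master \(2\)-test plan.

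I expect the main obstacle to be precisely the equi-Lipschitz requirement. A naive superposition of the flows of the unnormalised fields \(\nabla g_k\) would mix curves whose Lipschitz constants \(\big\||Dg_k|_2\big\|_{L^\infty(\sppi)}\) are unbounded, destroying the \(\infty\)-test plan property; the normalisation by \(\lambda_g\) is the device that repairs this, and its harmlessness hinges on the cancellation of \(\lambda_n\) in the central estimate above. The remaining points — measurability of the flow maps, the identity \(|\dot\gamma_t|=|\ppi'_t|=\lambda_g|Dg|_2\circ\e_t\), and the passage from \(L^2(\mu)\)-norm equality to \(\mm\)-a.e.\ equality — are routine once \(\mm\ll\mu\leq C\mm\) and Lemma \ref{lem:f_et_C1} are in hand.
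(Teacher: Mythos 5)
Your proof is correct, and it follows the paper's overall strategy — plans induced by regular Lagrangian flows of a countable dense family in \({\rm Test}^\infty(\X)\), the same evaluation-at-\(t=0\) estimate via Lemma \ref{lem:f_et_C1}, Cauchy--Schwarz, and Proposition \ref{prop:cont_f_et}, then the same gluing — but it handles the equi-Lipschitz requirement by a genuinely different device. The paper flows along the \emph{unnormalised} fields \(\nabla f_k\), so that \(\ppi^k\) is only concentrated on \(n_k\)-Lipschitz curves with \(n_k\) a priori unbounded in \(k\), and repairs this afterwards by reparametrisation: each \(\ppi^k\) is split into the \(n_k\) plans \(\ppi^{k,i}\coloneqq\big({\rm restr}_{(i-1)/n_k}^{i/n_k}\big)_\#\ppi^k\), each concentrated on \(1\)-Lipschitz curves, and one must then check (the paper states this without proof) that the restricted family \(\Pi'\) identifies the same weak upper gradients as \(\Pi\). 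You instead rescale the vector fields at the outset, flowing along \(\nabla(\lambda_g g)\) with \(\lambda_g=\big(1+\big\||Dg|_2\big\|_{L^\infty(\mm)}\big)^{-1}\), so every plan is concentrated on \(1\)-Lipschitz curves from the start; the price is the factor \(\lambda_n\) in the central estimate, and your observation that it cancels is exactly right, since it enters linearly on both sides (by bilinearity of the pointwise scalar product on the left, and through the metric speed \(|\dot\gamma_s|=\lambda_n|Dg_n|_2(\gamma_s)\) on the right). Your route buys a cleaner final step — no restriction maps and no verification that \(|Df|_{\Pi',2}=|Df|_{\Pi,2}\) — while the paper's route leaves the vector fields untouched; both are complete, and all the auxiliary facts you invoke (\(\lambda_g g\in{\rm Test}^\infty(\X)\), \((\e_0)_\#\ppi^g=\mu\), \(\langle\nabla h,\nabla g_n\rangle\to|Dh|_2^2\) in \(L^1(\mu)\) via \(\mu\leq C\mm\), and the passage from the \(L^2(\mu)\)-norm inequality to \(\mm\)-a.e.\ equality via \(\mm\ll\mu\)) are justified.
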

\begin{proof}
Given that \((\X,\sfd,\mm)\) is infinitesimally Hilbertian, we know
from \cite[Proposition 4.3.5]{GP20} that \(W^{1,2}(\X)\) is separable,
thus we can find a countable family
\(\mathcal C\subseteq{\rm Test}^\infty(\X)\) that is strongly dense
in \(W^{1,2}(\X)\). Choose any \(\tilde\mm\in\mathscr P_2(\X)\) such
that \(\mm\ll\tilde\mm\leq C\mm\) for some \(C>0\) (recall Remark
\ref{rmk:ex_tilde_m}). Given any \(f\in\mathcal C\), we call
\(F^f_\cdot\) the regular Lagrangian flow associated with \(\nabla f\)
and we set \(\ppi^f\coloneqq(F^f_\cdot)_\#\tilde\mm\). Let us then
define \(\Pi\coloneqq\{\ppi^f\,:\,f\in\mathcal C\}\). We claim that
\begin{equation}\label{eq:master_tp_RCD_aux}
|Df|_{\Pi,2}=|Df|_2\quad\text{ for every }f\in W^{1,2}(\X).
\end{equation}
Given that \(|Df|_{\Pi,2}\leq|Df|_2\) holds \(\mm\)-a.e.\ by
Proposition \ref{prop:ineq_mwug}, it is just sufficient to show the
inequality \(\int|Df|_2^2\,\d\tilde\mm\leq\int|Df|_{\Pi,2}^2\,\d\tilde\mm\).
To this aim, fix a sequence \((f_n)_n\subseteq\mathcal C\) with
\(f_n\to f\) strongly in \(W^{1,2}(\X)\) and \(|Df_n|_2\to|Df|_2\)
strongly in \(L^2(\tilde\mm)\). Call \(\ppi^n\coloneqq\ppi^{f_n}\)
for every \(n\in\N\). Note that
\((\e_0)_\#\ppi^n=(F^{f_n}_0)_\#\tilde\mm=\tilde\mm\), so
Lemma \ref{lem:f_et_C1} and dominated convergence theorem yield
\begin{align*}
\int|Df|_2^2\,\d\tilde\mm&=
\lim_{n\to\infty}\int\langle\nabla f,\nabla f_n\rangle\,\d\tilde\mm=
\lim_{n\to\infty}\int\langle\nabla f,\nabla f_n\rangle\circ\e_0\,\d\ppi^n\\
&=\lim_{n\to\infty}\lim_{t\searrow 0}\int\frac{f\circ\e_t-f\circ\e_0}{t}\,
\d\ppi^n\leq\limi_{n\to\infty}\limi_{t\searrow 0}\int\frac{\big|f(\gamma_t)
-f(\gamma_0)\big|}{t}\,\d\ppi^n(\gamma)\\
&\leq\limi_{n\to\infty}\limi_{t\searrow 0}\int\!\!\!\fint_0^t
|Df|_{\Pi,2}(\gamma_s)\,|\dot\gamma_s|\,\d s\,\d\ppi^n(\gamma)\\
&\leq\lim_{n\to\infty}\lim_{t\searrow 0}\bigg(\int\!\!\!\fint_0^t
|Df|_{\Pi,2}^2\circ\e_s\,\d\ppi^n\,\d s\bigg)^{\frac{1}{2}}
\bigg(\int\!\!\!\fint_0^t\big|(\ppi^n)'_s\big|^2\,\d\ppi^n\,\d s\bigg)
^{\frac{1}{2}}\\
&=\lim_{n\to\infty}\lim_{t\searrow 0}\bigg(\fint_0^t\big\|
|Df|_{\Pi,2}\circ\e_s\big\|_{L^2(\sppi^n)}^2\,\d s\bigg)^{\frac{1}{2}}
\bigg(\fint_0^t\big\||Df_n|_2\circ\e_s\big\|_{L^2(\sppi^n)}^2\,\d s\bigg)
^{\frac{1}{2}}\\
&=\lim_{n\to\infty}\bigg(\int|Df|_{\Pi,2}^2\circ\e_0\,\d\ppi^n\bigg)
^{\frac{1}{2}}\bigg(\int|Df_n|_2^2\circ\e_0\,\d\ppi^n\bigg)^{\frac{1}{2}}\\
&=\bigg(\int|Df|_{\Pi,2}^2\,\d\tilde\mm\bigg)^{\frac{1}{2}}
\lim_{n\to\infty}\bigg(\int|Df_n|_2^2\,\d\tilde\mm\bigg)^{\frac{1}{2}}\\
&=\bigg(\int|Df|_{\Pi,2}^2\,\d\tilde\mm\bigg)^{\frac{1}{2}}
\bigg(\int|Df|_2^2\,\d\tilde\mm\bigg)^{\frac{1}{2}}.
\end{align*}
Therefore, the claimed identity \eqref{eq:master_tp_RCD_aux} is satisfied.
In order to conclude, it remains to pass from the countable family \(\Pi\)
to a single \(\infty\)-test plan \(\ppi_2\). We proceed as follows: call
\(\Pi=(\ppi^k)_k\). Given any \(k\in\N\), there exists \(n_k\in\N\)
such that \(\ppi^k\) is concentrated on \(n_k\)-Lipschitz curves.
Then let us define
\[
\ppi^{k,i}\coloneqq\big({\rm restr}_{(i-1)/n_k}^{i/n_k}\big)_\#\ppi^k
\quad\text{ for every }i=1,\ldots,n_k.
\]
Therefore, we have that \(\ppi^{k,1},\ldots,\ppi^{k,n_k}\) are
\(\infty\)-test plans concentrated on \(1\)-Lipschitz curves.
Observe also that the family \(\Pi'\coloneqq\big\{\ppi^{k,i}\,:\,k\in\N,\,
i=1,\ldots,n_k\big\}\) satisfies \(W^{1,2}_{\Pi'}(\X)=W^{1,2}_\Pi(\X)\)
and \(|Df|_{\Pi',2}=|Df|_{\Pi,2}\) for every \(f\in W^{1,2}_{\Pi'}(\X)\).
Finally, let us define
\[
\eeta\coloneqq\sum_{k=1}^\infty\sum_{i=1}^{n_k}\frac{\ppi^{k,i}}
{2^{k+i}\max\big\{{\rm Comp}(\ppi^{k,i}),1\big\}},\qquad
\ppi_2\coloneqq\frac{\eeta}{\eeta\big(C\big([0,1],\X\big)\big)}.
\]
By arguing as in \textsc{Step 3} of the proof of Theorem
\ref{thm:master_tp}, we can see that \(\ppi_2\) is a \(\infty\)-test
plan (concentrated on \(1\)-Lipschitz curves). Given that
\(W^{1,2}_{\sppi_2}(\X)=W^{1,2}_{\Pi'}(\X)\) and \(|Df|_{\sppi_2,2}=
|Df|_{\Pi',2}\) for every \(f\in W^{1,2}_{\sppi_2}(\X)\), the
statement finally follows from the identity \eqref{eq:master_tp_RCD_aux}.
\end{proof}
\begin{remark}{\rm
We point out that every \(2\)-test plan \(\ppi\) induced by the
regular Lagrangian flow associated with \(\nabla f\), as in
\eqref{eq:tp_associated_RLF}, \(2\)-represents the gradient of \(f\).
Indeed, for \((\ppi\otimes\mathcal L^1)\)-a.e.\ \((\gamma,t)\)
it holds that \(|\dot\gamma_t|=|\ppi'_t|(\gamma)=
|\e_t^*\nabla f|(\gamma)=|Df|_2(\gamma_t)\) and
\(\frac{\d}{\d t}f(\gamma_t)=|Df|_2^2(\gamma_t)\), whence
\begin{align*}
\frac{{\rm E}_{2,t}(\gamma)}{t}&=\bigg(\fint_0^t|\dot\gamma_s|^2\,\d s
\bigg)^{\frac{1}{2}}=\bigg(\fint_0^t|Df|_2^2\circ\e_s\,\d s\bigg)
^{\frac{1}{2}}(\gamma),\\
\bigg(\frac{f\circ\e_t-f\circ\-e_0}{{\rm E}_{2,t}}\bigg)(\gamma)&=
\frac{t}{{\rm E}_{2,t}(\gamma)}\fint_0^t\frac{\d}{\d s}f(\gamma_s)\,\d s
=\frac{t}{{\rm E}_{2,t}(\gamma)}\fint_0^t|Df|_2^2(\gamma_s)\,\d s\\
&=\bigg(\fint_0^t|Df|_2^2\circ\e_s\,\d s\bigg)^{\frac{1}{2}}(\gamma)
\end{align*}
for every \(t\in(0,1)\) and \(\ppi\)-a.e.\ \(\gamma\). By recalling
Proposition \ref{prop:cont_f_et}, we conclude that the plan \(\ppi\)
\(2\)-represents the gradient of \(f\), as claimed above. This means
that Theorem \ref{thm:master_tp_RCD} could have been alternatively
proven by directly using the proof of Theorem \ref{thm:master_tp}.
\fr}\end{remark}
\begin{remark}\label{rmk:infty_tp_diff_p}{\rm
Suppose to have a metric measure space \((\X,\sfd,\mm)\) satisfying
the following property: given any \(p\in(1,\infty)\), there exists
a master \(q\)-test plan \(\ppi_q\) that is a \(\infty\)-test plan.
Then it can be readily checked that minimal \(p\)-weak upper
gradients are independent of \(p\). In light of Remark \ref{rmk:ineq_diff_p},
we deduce that there exist spaces where the above property fails.

On \({\sf RCD}(K,N)\) spaces the minimal weak upper gradients do not depend
on the exponent. If \(N\) is finite, then the space is (locally
uniformly) doubling and satisfies a (weak, local) Poincar\'{e} inequality,
thus the claim follows from the results of \cite{Cheeger00}; in the
infinite-dimensional case, it is proven in \cite{GH14}. According to this
observation, we might expect (or, at least, it is possible) that Theorem
\ref{thm:master_tp_RCD} can be generalised to all exponents \(p\in(1,\infty)\).
\fr}\end{remark}
\def\cprime{$'$} \def\cprime{$'$}

\end{document}